\documentclass[11pt,letterpaper]{amsart}

\usepackage{xcolor}
\usepackage[english]{babel}
\usepackage[T1]{fontenc}
\usepackage[utf8]{inputenc}

\usepackage{amsmath,amssymb,amsthm,mathrsfs}
\usepackage{tikz-cd}
\usepackage{graphicx}
\usepackage{xypic}
\usepackage{hyperref}

\usepackage{enumitem}

\usepackage[textwidth=5cm]{todonotes}

\usepackage{todonotes}

\hypersetup{
  colorlinks=true,
  linkcolor=blue,
  urlcolor=red,
  pdftitle={}
}

\usepackage{amscd}

\usepackage{lineno}
\usepackage{tcolorbox}

\usepackage{hyperref} % hypelink
\hypersetup{
    colorlinks=true,
    linkcolor=blue,
    urlcolor=red,
    pdftitle={ },
    }

\newcommand\Vbullet{\raisebox{-2.1pt}{\kern-0.4pt\vbox{\baselineskip4pt\lineskiplimit0pt%
\hbox{$\bullet$}\hbox{$\bullet$}}}}

\newtheorem{theorem}{Theorem}[section]
\newtheorem*{theorem*}{Theorem}
\newtheorem{proposition}[theorem]{Proposition}

\newtheorem{lemma}[theorem]{Lemma}

\newtheorem{corollary}[theorem]{Corollary}
\newtheorem{definition}[theorem]{Definition}

\newtheorem{remark}[theorem]{Remark}

\newcommand{\C}{\mathbb{C}}

\newcommand{\RR}{\mathbb{R}}

\newcommand{\Z}{\mathbb{Z}}

\newcommand{\eps}{\varepsilon}

\newcommand{\vf}{\varphi}

\newcommand{\Q}{\mathbb{Q}}

\newcommand{\Li}{\mathrm{Li}}

\begin{document}

\title[General Stirling-Ramanujan Constants are exponential periods]
{General Stirling-Ramanujan Constants are exponential periods and applications}

\subjclass[2020]{40A05, 40G99, 11J81, 33B15, 11M35, 11B68, 11M06}
\keywords{Stirling-Ramanujan constants, exponential period, Ramanujan summation, Divergent series, Bendersky Gamma function, Dirichlet L-functions, determinant of Laplacians}

\author{Mounir Hajli
}
\address{Westlake University, Hangzhou, China
}
\email{hajlimounir@gmail.com
}

\author{Ricardo P\'erez-Marco
}
\address{CNRS, IMJ-PRG, Universit\'e Paris Cit\'e, B\^at. Sophie Germain, Paris, France.
}
\email{ricardo.perez.marco@gmail.com
}

\maketitle

\bigskip

\begin{abstract} For $n\geq 0$, Stirling-Ramanujan constants $S_n$ are the Ramanujan summation of the divergent series $\sum_{k\geq 1} k^n\log k$. These constants are exponential periods over the exponential base field $\mathbb{E}=\mathbb{Q}(t,e^{-t})$. We generalize this result to a broad class of General Stirling-Ramanujan constants. Given polynomials $P$ and $Q$, with $\Re Q(s)>0$ for $\Re s >0$, the constant $S(P,Q)$ is the Ramanujan summation of the series $\sum_{k\geq 1} P(k)\log Q(k)$. They are exponential periods over $\mathbb{K}_P((\omega)) (t,e^{-t}, (e^{-\omega t}))$ where $\mathbb{K}_P$ is the field of definition of $P$ and $(\omega)$ are the zeros of $Q$. These  constants are related to the derivatives $\zeta'_H(-n,w)$ of Hurwitz zeta function at negative integers, which we prove are exponential periods over $\mathbb{E} \left ( w,e^{-wt}\right )$. They can be expressed using Bendersky Gamma functions $\hat \Gamma_n$ and the values $\log \hat \Gamma_n(w)$ for $\Re w >0$ are exponential periods over  $\mathbb{E} \left ( w,e^{-wt}\right )$. The logarithms of the determinants of the Laplacian on spheres and lens spaces are exponential periods over  $\mathbb{E}$. The values  $\zeta(2n+1)/\pi^{2n}$ are exponential periods over $\mathbb{E}$. For a periodic function $\chi:\mathbb{Z}\to \mathbb{C}$, we extend Ramanujan summation to the twisted series $\sum_{k\geq 1} \chi(k) P(k)\log Q(k)$ and define General Twisted Stirling-Ramanujan constants $S_{\chi}(P,Q)$. We derive integral formulas proving that they are exponential periods over $\mathbb{E}(\chi )=\mathbb{Q}(\chi)(t,e^{-t})$.  For $n\geq 0$, $L_\chi'(-n)$ and $L_\chi(n+1)/\pi^n$ are given in terms of these constants and are exponential periods over $\mathbb{E}(\chi )$.

\end{abstract}

\newpage

\tableofcontents

\newpage

\section{Introduction}

For $n\geq 0$, the Stirling-Ramanujan constant $S_n$ is defined by the asymptotic expansion of the divergent series, when $s\to +\infty$,
\begin{equation*}
 \sum_{k=1}^s k^n \log k = A_n(s) \log s + B_n(s) +S_n + R_n(1/s)
\end{equation*}
where $S_n\in \RR$, $A_n\in \Q[s]$, $B_n\in s\Q[s]$, and $R_n(1/s)\in  \frac1s \Q[[\frac1s]]$.
This divergent series was studied by Ramanujan who defined
the constant $S_n$ (\cite[Chapter 9, p.\ 273]{Ber}).

\medskip

For $n=0$, $S_0=\frac{\log(2\pi)}{2}$ is the classical Stirling constant, and for $n=1$, $S_1=\log A$ where $A$ is the Glaisher-Kinkelin constant. Explicit integral formulas for Stirling-Ramanujan constants were found in \cite{MPM4}, proving that they are exponential periods.

\medskip
 Ramanujan summability procedure consists in identifying the constant term in the asymptotic expansion. The ``constant term'' is well defined in this context for an asymptotic expansion in the bases of functions $(s^m\log (s))_{m\geq 0} \cup (s^{m})_{m\in \Z}$. Such an asymptotic expansion, when it exists, is unique. For a series $\sum_{k=1}^s f(k)$ with such an expansion (as is the case for most series but some twisted ones considered in this article) we can define the Ramanujan sum as the constant term. We denote this constant by
$$
S_{\mathcal R} \left (f(k)\right )  \ .
$$
Ramanujan applied this procedure to a broad variety of series. Hardy attempted  to formalize unsuccessfully a general theory of  Ramanujan summability in his book on divergent series \cite{Ha2}. More recently Candelpergher proposes a rigorous theory in \cite{Can}, and denotes by
\[
\sum^{\mathcal R}_{k\geq 1} f(k)
\]
the constants provided by his theory.
An alternative approach using Poisson-Newton formula was proposed  in \cite{MPM1} section 5.5.

\subsection{{General Stirling-Ramanujan constants.}}

For polynomials $P,Q \in \C[X]$, with $\Re Q(k) >0$ for $k\geq 1$, the
General Stirling-Ramanujan constant
\[
S_{\mathcal R} (P(k)\log Q(k))
\]
is well defined as the divergent series has the above asymptotic expansion.
The study of these general constants can be reduced to the following particular case: Let
$n\geq 0$ and $w \in \C$ such that $\Re w > 0$, we define the constant $\mathbf{S}_n(w)$ associated with the divergent series
\[
 \sum_{k=0}^{s-1} (k+w)^n \log(k+w)
\]
Since we have
\[
 \sum_{k=0}^{s-1} (k+w)^n \log(k+w)= \sum_{k=1}^s (k+w-1)^n \log(k+w-1)
\]

then 
we have
\[
\mathbf{S}_n(w)=S_{\mathcal R} ((k+w-1)^n \log (k+w-1)) 
\]

In particular, for the case $w=1$, the value $\mathbf{S}_n(1) = S_n$ recovers the classical Stirling--Ramanujan constants studied in \cite{MPM4}.

\medskip

Our first main result provides an explicit integral representation for these constants.

\begin{theorem}\label{thm:integral-rep-Sv}
 For $n\geq 0$ and $w\in \C$ with $\Re w >0$, the constant $\mathbf{S}_n(w)$ is given by the integral:
\begin{equation*}
\mathbf{S}_n(w) = (-1)^{n+1} n! \left ( \int_0^{+\infty} \frac{1}{t^n} \left( \frac{1}{1-e^{-t}} - \sum_{k=-1}^{n} b_k t^{k} \right) \, \frac{e^{-wt} dt}{t}  +\mathbf{\hat r}_n(w)\right ),
\end{equation*}
where the rational coefficients $(b_k)_{k\geq -1}$ are given by the generating function
\[
 \frac{1}{1-e^{-t}} = \sum_{k=-1}^{+\infty} b_k t^{k}=\frac1t +\frac12+\frac{1}{12}\, t-\frac{1}{720} \, t^3+\ldots
\]
and the coefficient $\mathbf{\hat r}_{n}(w)$ is given by
\[
\mathbf{\hat r}_{n}(w) = \sum_{\ell=1}^{n+1} b_{n-\ell} \frac{(-1)^{\ell}}{\ell!} w^{\ell} H_{\ell} + \left ( \sum_{\ell=0}^{n+1} b_{n-\ell} \frac{(-1)^{\ell+1}}{\ell!} w^{\ell} \right )\log w,
\]
where the $H_n=1+\frac12+\ldots+\frac1n$ are the harmonic numbers (with $H_0=0$).
\end{theorem}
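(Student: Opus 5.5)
The plan is to prove the formula for complex exponents first and then specialize. We replace $x^n\log x$ by $x^{-z}$ and use the Hurwitz zeta function $\zeta_H(z,w)$. Since $x^n\log x=-\frac{d}{dz}x^{-z}\big|_{z=-n}$, the finite sum is a derivative at $z=-n$ of
\[
\sum_{k=0}^{s-1}(k+w)^{-z}=\zeta_H(z,w)-\zeta_H(z,w+s).
\]
We then compute the Ramanujan constant term of each piece separately.

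\textbf{Step 1: the Hurwitz term.} We start from the Mellin representation
\[
\Gamma(z)\zeta_H(z,w)=\int_0^\infty t^{z-1}\frac{e^{-wt}}{1-e^{-t}}\,dt .
\]
We subtract $\sum_{k=-1}^{n}b_kt^k$ inside the integral, keeping the factor $e^{-wt}$, and add back the integrals $b_k\int_0^\infty t^{z+k-1}e^{-wt}\,dt=b_k\Gamma(z+k)w^{-z-k}$. This gives, by analytic continuation for $\Re z>-n-1$,
\[
\Gamma(z)\zeta_H(z,w)=I_n(z,w)+\sum_{k=-1}^{n}b_k\Gamma(z+k)w^{-z-k}.
\]
Here $I_n(z,w)$ is the regularized integral, which converges because its integrand is $O(t^{z+n})$ at $0$ and decays exponentially at $\infty$. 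We divide by $\Gamma(z)$ and differentiate at $z=-n$, where $1/\Gamma$ has a simple zero with derivative $(-1)^n n!$. The integral contributes $(-1)^n n!\,I_n(-n,w)$, which is exactly the integral in the statement.

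For $k\le n$ the quotient $\Gamma(z+k)/\Gamma(z)$ is a polynomial, so its $z$-derivative at $-n$ produces harmonic-number terms. The factor $w^{-z-k}$ produces $\log w$ terms. After reindexing with $\ell=n-k$ (so $\ell$ runs from $0$ to $n+1$), these are candidates for the terms $b_{n-\ell}\frac{(-1)^\ell}{\ell!}w^\ell H_\ell$ and $b_{n-\ell}\frac{(-1)^{\ell+1}}{\ell!}w^\ell\log w$ appearing in $\mathbf{\hat r}_n(w)$.

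\textbf{Step 2: the tail term and the constant term.} For $z$ not an integer we take the asymptotic expansion of $\zeta_H(z,w+s)$ in powers $(s+w)^{1-z-j}$, with Bernoulli coefficients. Re-expanding these binomially in $s$ gives only non-integer powers $s^{1-z-j-i}$, hence no constant term. However, at $z=-n$ finitely many of these exponents become $0$. Differentiating in $z$ and then letting $z\to-n$ produces genuine constants: these come from $\frac{d}{dz}\binom{1-z-j}{i}$ evaluated at a point where the binomial coefficient is $0$ or $1$, again giving harmonic-type numbers and powers of $w$. We also check that the expansion matches the basis $(s^m\log s)\cup(s^m)$, so that this is a legitimate constant-term extraction and not merely formal. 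In particular, $\mathbf{S}_n(w)$ is not simply $-\zeta_H'(-n,w)$; for instance $S_1=\tfrac1{12}-\zeta'(-1)$.

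\textbf{Step 3: assembly.} Combining the two steps, $\mathbf{S}_n(w)$ equals $-\zeta_H'(-n,w)$ plus the explicit correction constants from Step 2. Substituting Step 1 gives the overall factor $(-1)^{n+1}n!$, the integral, and a finite explicit sum. We then check that this finite sum equals $\mathbf{\hat r}_n(w)$. As a sanity check, at $w=1$ the formula should reduce to the integral formula for $S_n$ from \cite{MPM4}.

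The main obstacle is this final bookkeeping. The corrections from Step 2 and the pole contributions of $\Gamma(z+k)/\Gamma(z)$ from Step 1 must combine, with matching signs and Bernoulli coefficients $b_{n-\ell}$, into exactly the stated $\mathbf{\hat r}_n(w)$ with the stated ranges of $\ell$. We would verify this first for $n=0$ and $n=1$, and then carry out the general induction-free computation using the identity $\frac{d}{dz}\frac{\Gamma(z+k)}{\Gamma(z)}\big|_{z=-n}$ expressed through $H_{n-k}$.
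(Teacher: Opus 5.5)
Your proposal is correct, but it takes a genuinely different route from the paper. In fact it runs the paper's logic in reverse.

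\textbf{The paper's route.} It never uses $\zeta_H$ to prove Theorem~\ref{thm:integral-rep-Sv}. It defines $\mathbf{\hat S}_n(w)$ as the right-hand side and shows, via Euler--Maclaurin and the $w\frac{d}{dw}$-invariance of the asymptotic basis, that $\frac{d}{dw}\mathbf S_n=n\,\mathbf S_{n-1}$. It checks the same recurrence for $\mathbf{\hat S}_n$ through the identity $-\frac{d}{dw}\mathbf{\hat r}_n-\frac{b_n}{w}=\mathbf{\hat r}_{n-1}$, starts at $n=-1$ with a Frullani computation, and fixes each integration constant by importing the $w=1$ formula of \cite{MPM4}. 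Theorem~\ref{thm:zetasn} is derived only afterwards, by exactly your Step~1 computation.

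\textbf{Your route.} You write the partial sum as $\zeta_H'(-n,w+s)-\zeta_H'(-n,w)$ and use the regularized Mellin formula at both $x=w$ and $x=w+s$. This gives Theorems~\ref{thm:integral-rep-Sv} and~\ref{thm:zetasn} at once. It is self-contained: it reproves the $w=1$ case of \cite{MPM4} rather than assuming it, and it yields the existence and shape of the expansion without Euler--Maclaurin. The paper's route instead buys an almost computation-free argument, at the price of that external input.

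\textbf{Two simplifications.}

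First, Step~2 does not need generic $z$ or termwise $z$-differentiation of asymptotic expansions. Apply your Step~1 formula at $x=w+s$ with a truncation $N>n$. The remainder $(-1)^n n!\int_0^{\infty}t^{-n-1}\bigl(\frac{1}{1-e^{-t}}-\sum_{k=-1}^{N}b_kt^k\bigr)e^{-(w+s)t}\,dt$ is $O(s^{n-N-1})$, and the terms with $k>n$ only produce negative powers of $s+w$.

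Second, the bookkeeping you flag as the main obstacle collapses. Put $P_k=\frac{\Gamma(z+k)}{\Gamma(z)}\big|_{z=-n}=(-1)^k\frac{n!}{(n-k)!}$.
\begin{itemize}
\item The $z$-derivatives of $\Gamma(z+k)/\Gamma(z)$ enter $\zeta_H'(-n,w)$ and the constant term of $\zeta_H'(-n,w+s)$ identically, because the constant term in $s$ of $(s+w)^j$ is $w^j$. So they cancel.
\item The constant term of $(s+w)^j\log(s+w)$ is $w^jH_j$.
\end{itemize}
Hence
\[
\mathbf S_n(w)=(-1)^{n+1}n!\int_0^{+\infty}\frac{1}{t^n}\Bigl(\frac{1}{1-e^{-t}}-\sum_{k=-1}^{n}b_kt^k\Bigr)\frac{e^{-wt}\,dt}{t}+\sum_{k=-1}^{n}b_kP_k\,w^{n-k}\bigl(\log w-H_{n-k}\bigr),
\]
which, with $\ell=n-k$, is exactly the claimed formula with $(-1)^{n+1}n!\,\mathbf{\hat r}_n(w)$.

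So the $H_\ell$ in $\mathbf{\hat r}_n$ come entirely from the tail. They do not come from the Gamma-ratio derivatives of Step~1, as you anticipated. As a by-product, since $\frac{d}{dz}\frac{\Gamma(z+k)}{\Gamma(z)}\big|_{z=-n}=-P_k(H_n-H_{n-k})$, the constant term of $\zeta_H'(-n,w+s)$ equals $(-1)^{n+1}n!\,b_n(w)H_n$. That is precisely Theorem~\ref{thm:zetasn}.
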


The additive term $\mathbf{\hat r}_{n}(w)$ can be incorporated inside the integral since
$$
\mathbf{\hat r}_{n}(w) =-\int_0^{+\infty} t\, \mathbf{r}_{n}(w) \, \frac{e^{-wt} dt}{t}=-\frac{\mathbf{r}_{n}(w)}{w},
$$
and this makes the link with the notation $ r_{n} =\mathbf{ r}_{n}(1)$ used in \cite{MPM4}.

If we introduce the modified Bernoulli polynomials defined by the generating series
 $$
 \frac{e^{-wt}}{1-e^{-t}} = \frac1t + \sum_{k=0}^{+\infty} b_k(w) t^k,
 $$

then we have
$$
b_n(w)=\sum_{\ell=0}^{n+1} b_{n-\ell} \frac{(-1)^{\ell}}{\ell !} w^{\ell},
$$
and we can write
$$
\mathbf{\hat r}_{n}(w) =b_n(w)\odot\left ( \frac{\Li_1(w)}{1-w}\right) - b_n(w) \log w,
$$

where $ \Li_1(w) =-\log(1-w)$ and $\odot$ denotes the Hadamard product. We recall that the generating series of harmonic numbers is given by
$$
\sum_{n=0}^{+\infty} H_n w^n = \frac{\Li_1(w)}{1-w} \ .
$$

Using the Higher Frullani integrals, one readily verifies that
the functions $w^n\log w$ are
exponential periods over the base field
\[
\Q(t,e^{-t},w,e^{-wt}).
\]
Hence $\hat{\mathbf r}_n(w)$ are also exponential periods over this same
base field and we obtain the following result.
\begin{corollary}\label{cor:base_field}
For $\Re w>0$, the General Stirling-Ramanujan constant $\mathbf{S}_n(w)$ is an exponential period over the field  $\Q(t,e^{-t},w,e^{-wt})$.
\end{corollary}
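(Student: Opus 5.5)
The corollary follows from Theorem~\ref{thm:integral-rep-Sv} by showing that each of its two pieces is an exponential period over $\mathbb{F}=\Q(t,e^{-t},w,e^{-wt})$. We then use that exponential periods over a fixed base field form a $\Q$-algebra, in particular are closed under addition and multiplication by rationals. The prefactor $(-1)^{n+1}n!$ is rational, so it suffices to treat the integral term and $\mathbf{\hat r}_n(w)$ separately.

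\emph{The integral term.} The integrand
\[
\frac{1}{t^{n+1}}\Big(\frac{1}{1-e^{-t}}-\sum_{k=-1}^{n} b_k t^k\Big)e^{-wt}
\]
lies in $\mathbb{F}$. Here $1/(1-e^{-t})\in\Q(t,e^{-t})$, the $b_k$ are rational, and $e^{-wt}\in\mathbb{F}$. The domain $[0,+\infty)$ is a semi-algebraic (indeed $\Q$-definable) set.

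Convergence has to be checked at both ends:
\begin{itemize}
\item Near $t=0$, the bracket is $O(t^{n+1})$ because the subtracted sum is the Taylor truncation up to order $n$. The integrand is therefore bounded.
\item At $+\infty$, the bracket grows at most polynomially, and it is multiplied by $e^{-wt}$ with $\Re w>0$. The integral is therefore absolutely convergent.
\end{itemize}
This matches the definition of an exponential period over $\mathbb{F}$: an absolutely convergent integral over a semi-algebraic domain of a function in the base field.

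\emph{The term $\mathbf{\hat r}_n(w)$.} The terms $b_{n-\ell}\frac{(-1)^\ell}{\ell!}w^\ell H_\ell$ are elements of $\Q(w)\subset\mathbb{F}$. Each such element $c$ is trivially an exponential period, for instance $c=\int_0^1 c\,dt$.

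The remaining part is $q(w)\log w$ with $q\in\Q[w]$. Multiplication by elements of $\mathbb{F}$ preserves exponential periods: absorb the factor into the integrand. So it suffices to show that $\log w$ is an exponential period. For this I would use the Frullani integral
\[
\log w=\int_0^{+\infty}\frac{e^{-t}-e^{-wt}}{t}\,dt,\qquad \Re w>0 .
\]
This identity holds for real $w>0$ and extends to $\Re w>0$ by analytic continuation, using the principal branch of $\log$. The integrand lies in $\mathbb{F}$, and the integral converges absolutely: at $0$ the integrand tends to $w-1$, and at $\infty$ it decays exponentially.

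Alternatively, one can read $\mathbf{\hat r}_n(w)$ directly via the displayed identity $\mathbf{\hat r}_n(w)=-\mathbf r_n(w)/w$, as an integral of $\mathbf r_n(w)e^{-wt}$. Either route works.

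\emph{Main obstacle.} The only genuinely delicate step is matching the precise definition of ``exponential period over $\mathbb{F}$'' used in the paper, following \cite{MPM4}. Two things must be confirmed there:
\begin{itemize}
\item that the base field may contain the transcendental parameter $w$, so that $\mathbb{F}$-coefficients like $w^\ell$ and $e^{-wt}$ are allowed;
\item that the class is closed under sums and products, or at least under sums and multiplication by elements of the base field.
\end{itemize}
With that framework in place, the argument is immediate from Theorem~\ref{thm:integral-rep-Sv} together with the Frullani representation of $\log w$.
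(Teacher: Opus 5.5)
Your argument is correct and is essentially the paper's. The paper also reduces everything, via Theorem~\ref{thm:integral-rep-Sv}, to showing that $\mathbf{\hat r}_n(w)$ is an exponential period, and handles the logarithmic part by a Frullani integral. The one difference is in that last step. The paper applies the Higher Frullani formula (Theorem~\ref{thm:Frullani}) with $s=w-1$ to each $w^\ell\log w$. You instead use the plain Frullani integral for $\log w$ and absorb the polynomial $q(w)\in\Q[w]\subset\mathbb{F}$ into the integrand. This is legitimate, since your formula is exactly the $n=0$ case of Theorem~\ref{thm:Frullani} at $s=w-1$, and it is slightly more economical.

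Two remarks.

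First, drop the claimed alternative route through $\mathbf{\hat r}_n(w)=-\mathbf r_n(w)/w$. The coefficient $\mathbf r_n(w)=-w\,\mathbf{\hat r}_n(w)$ contains $\log w$, which for general $w$ is not in $\mathbb{F}$. So the integrand $\mathbf r_n(w)e^{-wt}$ is not an element of $\mathbb{F}$, and that identity is circular. It is harmless only at $w=1$, where $\log 1=0$ makes $\mathbf r_n(1)$ rational.

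Second, the closure question you flag as the main obstacle goes away if you write each constant as $c=\int_0^{+\infty}c\,e^{-t}\,dt$ rather than $\int_0^1 c\,dt$. Then three pieces are all absolutely convergent integrals over the same domain $[0,+\infty)$ with integrands in $\mathbb{F}$:
\begin{itemize}
\item the integral term;
\item the polynomial part of $\mathbf{\hat r}_n(w)$;
\item $q(w)\log w=\int_0^{+\infty}q(w)\,\frac{e^{-t}-e^{-wt}}{t}\,dt$.
\end{itemize}
Hence $\mathbf{S}_n(w)$ is a single such integral.
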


Observe that when $w=p/q\in \Q$, $q\geq 1$,  $\gcd(p, q)=1$, we have $\Q(e^{-t},e^{-wt}) =\Q(e^{-t/q})$.

\medskip

From this result we can derive the general formula for $S_{\mathcal R}(P(k)\log Q(k))$ using the
factorization of $Q$.  
If $b$ denotes the leading coefficient of $Q$, with $\Re(b)>0$, and the roots of $Q$ are of the form $-\omega$, with multiplicities $n_\omega$, we may assume, without loss of generality, that $\Re(\omega)>0$,
 and if $(a_n)_{0\leq n\leq d}$ are the coefficients of $P$,
\begin{align*}
P(X)&=\sum_{n=0}^d a_n X^n \ \ , \ \  Q(X)= b\prod_\omega (X+\omega)^{n_\omega},
\end{align*}
then we have
\begin{align*}
&\sum_{k=0}^{s-1} P(k) \log Q(k) = (\log b) \sum_{k=0}^{s-1} P(k)+ \sum_{n=0}^d \sum_\omega a_n n_\omega\sum_{k=0}^{s-1} k^n \log (k+\omega) \\
&=(\log b) \sum_{n=0}^d a_n \sum_{k=0}^{s-1} k^n+\sum_{n=0}^d \sum_\omega a_n n_\omega \sum_{j=0}^n \binom{n}{j} (-\omega)^{n-j}\sum_{k=0}^{s-1} (k+\omega)^j \log (k+\omega)\;.
\end{align*}
By Faulhaber's formula the first sum gives no contribution, therefore we have,
$$
S_{\mathcal R}(P(k-1) \log Q(k-1))= \sum_{n=0}^d \sum_\omega  \sum_{j=0}^n a_n n_\omega \binom{n}{j} (-\omega)^{n-j} \mathbf{S}_j(\omega),
$$
and by linear combination using Theorem \ref{thm:integral-rep-Sv} we have the integral formula for $S_{\mathcal R}(P(k-1) \log Q(k-1))$ as an exponential period. The inspection of the coefficients gives the
following Theorem.

\begin{theorem}\label{thm2}
The General Stirling-Ramanujan constant $S_{\mathcal R}(P(k-1) \log Q(k-1))$ is an exponential period over the base function field $\mathbb{K}_P((\omega)) (t,e^{-t}, (e^{-\omega t}))$
where $\mathbb{K}_P$ is the field of definition of $P$ and  
 $\mathbb{K}_P((\omega))$ is the splitting field  of $Q$ over $\mathbb{K}_P$.
\end{theorem}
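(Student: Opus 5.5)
We combine the reduction displayed just before the statement with Theorem \ref{thm:integral-rep-Sv}, and then check the coefficients. Write $P(X)=\sum_{n=0}^d a_n X^n$ with $a_n\in\mathbb{K}_P$, and $Q(X)=b\prod_\omega (X+\omega)^{n_\omega}$. By the reduction above,
\[
S_{\mathcal R}(P(k-1)\log Q(k-1))=\sum_{n=0}^d\sum_\omega\sum_{j=0}^n a_n n_\omega\binom{n}{j}(-\omega)^{n-j}\,\mathbf{S}_j(\omega).
\]
The term $(\log b)\sum_k P(k)$ contributes nothing, since by Faulhaber's formula $\sum_{k=0}^{s-1}k^n$ is a polynomial in $s$ with zero constant term. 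I will check this first, with one care point: the branch of $\log Q(k)=\log b+\sum_\omega n_\omega\log(k+\omega)$. Both sides have real part of the argument positive for large $k$, so the two sides differ by a constant in $2\pi i\Z$, constant in $k$. Such a constant multiplies $\sum P(k)$ and is killed by the same Faulhaber argument. The roots $-\omega$ with $\Re\omega\le 0$ also need care: the paper says "without loss of generality $\Re\omega>0$". I will justify this by shifting the summation index by a fixed integer $m$ so that all $\Re(\omega+m)>0$. The shift changes the Ramanujan constant only by finitely many terms $P(k)\log Q(k)$ and by constant terms of polynomial-times-log expressions in $s$. I expect these extra contributions to be logarithms of algebraic numbers in $\mathbb{K}_P((\omega))$ times elements of that field. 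These are exponential periods over the base field via Frullani-type integrals $\log u=\int_0^\infty (e^{-t}-e^{-ut})\,dt/t$, using $e^{-(\omega+m)t}=e^{-\omega t}e^{-mt}$. This bookkeeping is the main obstacle; everything else is linear algebra.

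Next I apply Theorem \ref{thm:integral-rep-Sv} to each $\mathbf{S}_j(\omega)$ with $\Re\omega>0$. It gives $\mathbf{S}_j(\omega)$ as $(-1)^{j+1}j!$ times an absolutely convergent integral over $(0,+\infty)$. The integrand
\[
t^{-j-1}\Big(\frac{1}{1-e^{-t}}-\sum_{k=-1}^{j}b_kt^k\Big)e^{-\omega t}
\]
has rational $b_k$ and lies in $\Q(t,e^{-t},e^{-\omega t})$. To this integral one adds $\mathbf{\hat r}_j(\omega)$. That term is a polynomial in $\omega$ with rational coefficients (involving $H_\ell$ and $b_k$), plus a rational polynomial in $\omega$ times $\log\omega$. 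By Corollary \ref{cor:base_field} and the Higher Frullani integral representation of $\omega^\ell\log\omega$, each $\mathbf{S}_j(\omega)$ is an exponential period over $\Q(t,e^{-t},\omega,e^{-\omega t})$.

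Finally I take the linear combination. The coefficients $a_n n_\omega\binom{n}{j}(-\omega)^{n-j}$ lie in $\mathbb{K}_P((\omega))$. Exponential periods over a field $F$ form an $F$-vector space: one combines integrals over the same domain $(0,+\infty)$ by adding integrands, and constants can be absorbed as integrands like $c\,e^{-t}$. Every integrand appearing lies in $\mathbb{K}_P((\omega))(t,e^{-t},(e^{-\omega t}))$, so the full sum, together with the correction terms from the first step, is a single absolutely convergent integral over $(0,+\infty)$ of a function in this field. This proves the theorem.
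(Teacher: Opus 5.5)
Your main line (expressing the constant as the $\mathbb{K}_P((\omega))$-linear combination of the $\mathbf{S}_j(\omega)$, applying Theorem~\ref{thm:integral-rep-Sv} and Frullani to each, and merging everything into one absolutely convergent integral over $(0,+\infty)$) is exactly the paper's argument, which simply says ``inspection of the coefficients''. It is correct. The two points you add beyond the paper are worth raising, but your justification fails in both places. Each is easily repaired.

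\emph{Branches.} The discrepancy $\delta_k=\log Q(k)-\log b-\sum_\omega n_\omega\log(k+\omega)\in 2\pi i\Z$ tends to $0$, so it vanishes for large $k$. It need not be constant in $k$, however. For $Q(X)=(X+\frac{1}{10}+\frac{2}{5}i)^4$ one has $\Re Q(k)>0$ for every $k\geq 0$, yet $\delta_0=-2\pi i$ and $\delta_k=0$ for $k\geq 1$. So Faulhaber does not kill it: it shifts the constant by the finite sum $\sum_k \delta_k P(k)\in 2\pi i\,\mathbb{K}_P$. (It does vanish identically if $\Re Q(x)>0$ for all real $x\geq 0$, by continuity.) This is harmless. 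A nonzero $\delta_k$ forces a non-real root, so the field contains some $\alpha$ with $\Im\alpha>0$, and $i\pi=\int_0^{+\infty}\left(\frac{1}{t-\alpha}-\frac{1}{t+\alpha}\right)dt$ has its integrand in $\mathbb{K}_P((\omega))(t)$.

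\emph{The shift.} Once the $\log b$ terms cancel, the only contribution not already covered by your Frullani argument is $\sum_{k<m}P(k)\operatorname{Log}u_k$, with $u_k=Q(k)/b=\prod_\omega(k+\omega)^{n_\omega}$. The integral $\int_0^{+\infty}(e^{-t}-e^{-ut})\frac{dt}{t}$ does not produce these terms. Splitting $u_k$ into its factors brings back exactly the $k+\omega$ with $\Re(k+\omega)\leq 0$ that forced the shift, and there the integral diverges. Without splitting, $e^{-u_k t}$ lies in the base field only when $u_k\in\Z+\sum_\omega\Z\,\omega$. Use instead $\operatorname{Log}u=\int_0^{+\infty}\left(\frac{1}{1+t}-\frac{1}{u+t}\right)dt$ for $u\in\mathbb{K}_P((\omega))\setminus(-\infty,0]$, whose integrand lies in $\mathbb{K}_P((\omega))(t)$. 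Here $u_k\notin(-\infty,0]$ and $\log Q(k)-\log b=\operatorname{Log}u_k$, because $\Re Q(k)>0$ and $\Re b>0$. With these two repairs the theorem follows as you intended.
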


\subsection{Relation to the Hurwitz zeta function.}

The constants $\mathbf{S}_n(w) $ are directly related to derivatives at
negative integer points of Hurwitz zeta function, defined for $\Re s >1$
and $\Re w >0$ by
$$
\zeta_H(s,w) =\sum_{k=0}^{+\infty} \frac{1}{(k+w)^s},
$$
and extends meromorphically to $s\in \C$, holomorphically to $s\in \C-\{1\}$. We
refer to \cite{WW} for more information about Hurwitz zeta function.

\begin{theorem}\label{thm:zetasn}
 Let $n\geq 0$ be an integer and $w\in \C$ with $\Re w >0$. We have
 \[
  \mathbf{S}_n(w) = -\zeta_H'(-n,w) +(-1)^{n+1} n! b_n(w) H_n \ ,
 \]
 where $\zeta_H'(s,w)= \frac{\partial}{\partial s} \zeta_H(s,w)$ and $b_{n}(w)$
 are the modified Bernoulli polynomials.

\end{theorem}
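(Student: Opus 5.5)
The plan is to bypass the integral of Theorem \ref{thm:integral-rep-Sv} and read off the constant term directly from the asymptotic expansion of the Hurwitz zeta function in its second variable. For $\Re z>1$ and every integer $s\geq 1$ we have the exact identity
$$
\sum_{k=0}^{s-1}(k+w)^{-z}=\zeta_H(z,w)-\zeta_H(z,w+s),
$$
and both sides are entire in $z$ apart from the pole at $z=1$. By analytic continuation the identity therefore holds for all $z\neq 1$. Differentiating in $z$ and setting $z=-n$ gives
$$
\sum_{k=0}^{s-1}(k+w)^n\log(k+w)=-\zeta_H'(-n,w)+\zeta_H'(-n,w+s).
$$
Hence $\mathbf{S}_n(w)=-\zeta_H'(-n,w)+C$, where $C$ is the constant term of $\zeta_H'(-n,w+s)$ in the basis $(s^m\log s)_{m\geq0}\cup(s^m)_{m\in\Z}$ as $s\to+\infty$. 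Uniqueness of such expansions, recalled in the introduction, makes $C$ well defined.

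Next we use the standard Euler--Maclaurin asymptotic expansion, uniform for $z$ in compact sets and hence differentiable term by term in $z$. As $x\to\infty$,
$$
\zeta_H(z,x)\sim \frac{x^{1-z}}{z-1}+\frac{x^{-z}}{2}+\sum_{j\geq1}\frac{B_{2j}}{(2j)!}\,z(z+1)\cdots(z+2j-2)\,x^{-z-2j+1}.
$$
Equivalently, $\zeta_H(z,x)\sim \sum_{m\geq 0} c_m(z)\,x^{1-z-m}$, where the $c_m(z)$ are rational in $z$ and built from the $b_k$. The cleanest way to organise the coefficients is via $\frac{1}{1-e^{-t}}=\sum b_k t^k$ and the Mellin formula $\zeta_H(z,x)=\Gamma(z)^{-1}\int_0^\infty t^{z-1}e^{-xt}(1-e^{-t})^{-1}\,dt$. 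Inserting the Taylor expansion gives
$$
c_m(z)=b_{m-1}\,\frac{\Gamma(z+m-1)}{\Gamma(z)}.
$$
We then differentiate in $z$ at $z=-n$. Two kinds of terms appear:

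\begin{itemize}
\item $-c_m(-n)\,x^{n+1-m}\log x$;
\item $c_m'(-n)\,x^{n+1-m}$.
\end{itemize}

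For $m\leq n+1$ the value $c_m(-n)$ is a finite Pochhammer product, and its logarithmic derivative is a difference of harmonic numbers. For $m\geq n+2$ the Pochhammer ratio has a zero at $z=-n$, so $c_m(-n)=0$ and $c_m'(-n)$ is finite.

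We now substitute $x=s+w$ and extract the constant term in $s$. The term $c_m'(-n)(s+w)^{n+1-m}$ contributes $c_m'(-n)w^{n+1-m}$ when $m\leq n+1$, and nothing when $m\geq n+2$, since then only negative powers of $s$ occur. For the logarithmic term we write $\log(s+w)=\log s+\log(1+w/s)$. The $\log s$ part produces no constant. The product $(s+w)^{p}\log(1+w/s)$ with $p=n+1-m$ has constant term
$$
\sum_{\ell\geq1}\binom{p}{p-\ell}\frac{(-1)^{\ell+1}}{\ell}\,w^{p},
$$
which equals $w^pH_p$ via the identity $\sum_{\ell=1}^p\binom p\ell\frac{(-1)^{\ell+1}}{\ell}=H_p$. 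Summing over $m$, $C$ becomes a polynomial in $w$ whose coefficients combine $b_{m-1}$, Pochhammer values at $-n$, and harmonic numbers.

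The main obstacle is the final combinatorial simplification: we must show that this polynomial equals $(-1)^{n+1}n!\,b_n(w)H_n$, with $b_n(w)=\sum_{\ell}b_{n-\ell}\frac{(-1)^\ell}{\ell!}w^\ell$. At $z=-n$ one has
$$
\frac{\Gamma(z+m-1)}{\Gamma(z)}=(-1)^{m-1}\frac{n!}{(n+1-m)!}.
$$
Its derivative contributes $H_n-H_{n+1-m}$ times this value. The $-H_{n+1-m}$ piece should cancel exactly against the $w^{p}H_p$ contribution coming from the logarithm. What remains is $(-1)^{n+1}n!H_n\sum_m b_{m-1}\frac{(-1)^{p}}{p!}w^{p}$, which is the claim after reindexing $\ell=p$. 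I would verify the signs of this cancellation first, checking against $n=0$, where the formula should give $\mathbf{S}_0(w)=-\zeta_H'(0,w)=-\log\Gamma(w)+\tfrac12\log 2\pi$, consistent with $H_0=0$.
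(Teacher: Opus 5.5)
Your argument is correct up to one sign, which you should fix (see below). It takes a genuinely different route from the paper.

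\textbf{Comparison with the paper.} The paper works at fixed $w$. It writes the continuation of $\zeta_H(s,w)$ as the Mellin integral with the Laurent terms $\sum_{k=-1}^n b_kt^k$ subtracted, plus the explicit terms $b_k w^{-s-k}\Gamma(s+k)/\Gamma(s)$. It then differentiates at $s=-n$ using $1/\Gamma(-n)=0$ and $(1/\Gamma)'(-n)=(-1)^nn!$. The resulting integral is exactly the one in Theorem \ref{thm:integral-rep-Sv}, so subtracting that theorem's formula for $\mathbf{S}_n(w)$ leaves explicit terms differing by $(-1)^{n+1}n!\,b_n(w)H_n$. This is a short coefficient comparison, but it inherits everything Theorem \ref{thm:integral-rep-Sv} rests on: the $w=1$ formula imported from \cite{MPM4} and the differential recurrence in $w$.

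Your proof needs only three ingredients: the telescoping identity $\sum_{k=0}^{s-1}(k+w)^{-z}=\zeta_H(z,w)-\zeta_H(z,w+s)$, uniqueness of the expansion, and the large-$x$ expansion of $\zeta_H(z,x)$. The last is the same subtracted Mellin formula read as $x\to\infty$. Its remainder is holomorphic in $z$ and $O(x^{-\Re z-N})$ locally uniformly, so term-by-term differentiation in $z$ follows from Cauchy estimates. Your route is self-contained and shows where $H_n$ comes from: re-expanding $(s+w)^p\log(s+w)$ in the basis $s^j,\ s^j\log s$ produces the constant $w^pH_p$. Combined with the paper's closed formula for $\zeta_H'(-n,w)$, it would even reprove Theorem \ref{thm:integral-rep-Sv}, including $w=1$, without \cite{MPM4}.

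\textbf{The sign.} For $1\le m\le n+1$, $\Gamma(z+m-1)/\Gamma(z)=z(z+1)\cdots(z+m-2)$ has logarithmic derivative $\sum_{j=0}^{m-2}\frac{1}{j-n}=-(H_n-H_{n+1-m})$ at $z=-n$. The same formula holds for $m=0$, where $c_0(z)=1/(z-1)$. So with $p=n+1-m$ you have $c_m'(-n)=-(H_n-H_p)\,c_m(-n)$, not $+(H_n-H_p)\,c_m(-n)$.

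With the sign you wrote, two things go wrong. The $H_p$ terms would add to the logarithm's contribution $-c_m(-n)w^pH_p$ instead of cancelling it. The surviving $H_n$ term would also carry $(-1)^n$ instead of $(-1)^{n+1}$.

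With the correct sign, each $m\le n+1$ contributes exactly $-H_n\,c_m(-n)\,w^p$. Since $c_m(-n)=(-1)^{n-p}\frac{n!}{p!}b_{n-p}$, the sum is $(-1)^{n+1}n!\,H_n\,b_n(w)$, as you state.

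Your sanity check at $n=0$ cannot catch this, because $H_0=0$. The case $n=1$ does: there the constant term must be $b_1(w)=\frac{w^2}{2}-\frac{w}{2}+\frac{1}{12}$, giving $\mathbf{S}_1(1)=\frac{1}{12}-\zeta'(-1)=\log A$.
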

In particular, for $n=0$ we have
 \[
\mathbf{S}_0(w)= - \zeta_H'(0,w) \ .
 \]
 Also for $n\geq 1$ and $w=1$, we have $b_n(1)=b_n(0) = (-1)^{n+1} \frac{B_{n+1}}{(n+1)!}$ where $(B_{n})_n$ are the regular Bernoulli numbers. Therefore we have,
 \[
\mathbf{S}_n(1) = - \zeta'(-n)+(-1)^{n+1} n! b_n(1)H_n  =- \zeta'(-n)+ \frac{B_{n+1}}{n+1}H_n,
\]
where $\zeta$ is the Riemann zeta function.
 \begin{corollary} \label{cor:Adamchik_formula}
 We have
 $$
 \mathbf{S}_n(1) = - \zeta'(-n)+\frac{B_{n+1}}{n+1}H_n \; .
 $$
 \end{corollary}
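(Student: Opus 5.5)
The corollary is the specialization $w=1$ of Theorem~\ref{thm:zetasn}, so the plan is to substitute $w=1$ there and identify $b_n(1)$ with the classical Bernoulli numbers.

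\emph{Step 1: specialize.} For $w=1$ we have $\zeta_H(s,1)=\sum_{k\geq 0}(k+1)^{-s}=\zeta(s)$. This holds for $\Re s>1$, hence by analytic continuation on $\C-\{1\}$. Therefore $\zeta_H'(-n,1)=\zeta'(-n)$, and Theorem~\ref{thm:zetasn} gives
\[
\mathbf{S}_n(1)=-\zeta'(-n)+(-1)^{n+1}n!\,b_n(1)H_n .
\]

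\emph{Step 2: compute $b_n(1)$.} Use the generating series
\[
\frac{e^{-t}}{1-e^{-t}}=\frac{1}{e^t-1}=\frac1t\sum_{m\geq 0}\frac{B_m}{m!}t^m ,
\]
so that $b_n(1)=B_{n+1}/(n+1)!$ for $n\geq 0$. Alternatively, note that $\frac{1}{1-e^{-t}}-\frac{e^{-t}}{1-e^{-t}}=1$. This shows $b_n(1)=b_n(0)=b_n$ for $n\geq 1$, and $b_n=(-1)^{n+1}B_{n+1}/(n+1)!$ from $\frac{1}{1-e^{-t}}=\frac{-t}{e^{-t}-1}\cdot\frac1t$. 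The two expressions agree: for even $n\geq 2$, $B_{n+1}=0$ and both vanish, and for odd $n$ the sign $(-1)^{n+1}=1$.

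\emph{Step 3: combine.} Since $((-1)^{n+1})^2=1$, we get
\[
(-1)^{n+1}n!\,b_n(1)H_n=\frac{n!}{(n+1)!}B_{n+1}H_n=\frac{B_{n+1}}{n+1}H_n .
\]
The case $n=0$ needs separate attention, because there $b_0(1)=-\tfrac12\neq b_0$. However $H_0=0$, so the correction term vanishes and the formula reduces to $\mathbf{S}_0(1)=-\zeta'(0)$.

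The only point needing care is the sign and index convention in Step 2, in particular the special case $n=0$. This is resolved by $H_0=0$, so no step presents a genuine obstacle.
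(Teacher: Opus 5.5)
Your proposal is correct and follows the paper's argument: specialize Theorem~\ref{thm:zetasn} at $w=1$, where $\zeta_H(s,1)=\zeta(s)$, and use $b_n(1)=b_n(0)=(-1)^{n+1}B_{n+1}/(n+1)!$ for $n\geq 1$. Your separate treatment of $n=0$ via $H_0=0$ makes explicit a case the paper handles only implicitly, and for the same reason your first generating series, which uses the convention $B_1=-\tfrac12$ rather than the paper's $B_1=\tfrac12$, causes no harm.
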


This formula can be found in Adamchik's article \cite{Ad}.
The relation of the Stirling-Ramanujan constants (this is the case $w=1$)
to special values of Riemann zeta function is already present in Ramanujan's formulas (see \cite{Be} p.275). Our general formula for $w\in \C$, $\Re w >0$, generalizes all these formulas.

As a direct  Corollary we get an exponential period integral representation for
$\zeta_H'(-n,w)$ that seems new in the literature.
\begin{corollary}
We have
$$
\zeta_H'(-n,w) = (-1)^{n} n! \left ( \int_0^{+\infty} \frac{1}{t^n} \left( \frac{1}{1-e^{-t}} - \sum_{k=-1}^{n} b_k t^{k} \right) \, \frac{e^{-wt} dt}{t}  +\mathbf{\hat r}_n(w)-b_n(w) H_n\right)\;.
$$
The special value $\zeta'(-n,w)$ is an exponential period over the field
$\Q(t,e^{-t},w, e^{-wt})$.
\end{corollary}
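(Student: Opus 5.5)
The corollary is a formal combination of Theorem~\ref{thm:integral-rep-Sv} and Theorem~\ref{thm:zetasn}. First I would solve Theorem~\ref{thm:zetasn} for the Hurwitz derivative:
\[
\zeta_H'(-n,w) = -\mathbf{S}_n(w) + (-1)^{n+1} n!\, b_n(w) H_n .
\]
Next I would substitute the integral representation of $\mathbf{S}_n(w)$ from Theorem~\ref{thm:integral-rep-Sv}. Since $-(-1)^{n+1}=(-1)^n$, the first term becomes
\[
(-1)^n n!\left(\int_0^{+\infty}\cdots\,\frac{e^{-wt}dt}{t}+\mathbf{\hat r}_n(w)\right).
\]
The second term can be written as $(-1)^{n+1}n!\,b_n(w)H_n = (-1)^n n!\,(-b_n(w)H_n)$. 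This term therefore sits inside the same bracket with a minus sign, which gives exactly the displayed formula. This is a routine sign check and is the only computational step.

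For the exponential period claim I would argue term by term over $\mathbb{F}=\Q(t,e^{-t},w,e^{-wt})$.

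\textbf{The integral.} The integrand is
\[
t^{-n-1}\left(\frac{1}{1-e^{-t}}-\sum_{k=-1}^n b_k t^k\right)e^{-wt},
\]
which is an element of $\mathbb{F}$ because the $b_k$ are rational. Its domain $[0,+\infty)$ is semi-algebraic, and the integral converges absolutely. Near $0$, the bracket is $O(t^{n+1})$, so the integrand is bounded. At $+\infty$, the factor $e^{-wt}$ with $\Re w>0$ dominates the polynomial terms. So the integral is an exponential period over $\mathbb{F}$.

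\textbf{The correction $\mathbf{\hat r}_n(w)$.} This term is already shown in the text preceding Corollary~\ref{cor:base_field} to be an exponential period over $\mathbb{F}$, via the higher Frullani integrals for $w^\ell\log w$.

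\textbf{The term $b_n(w)H_n$.} Here $b_n(w)$ is a polynomial in $w$ with rational coefficients, and $H_n\in\Q$, so $b_n(w)H_n\in\mathbb{F}$. An element $c$ of the base field is itself an exponential period, for instance $c=\int_0^1 c\,dt$.

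Finally, exponential periods over $\mathbb{F}$ form a $\Q$-algebra, and in fact are closed under addition and multiplication by elements of $\mathbb{F}$. The factor $(-1)^n n!$ is rational, so $\zeta_H'(-n,w)$ is an exponential period over $\mathbb{F}$. Equivalently, the whole statement follows from Corollary~\ref{cor:base_field} together with this closure.

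The only point needing care is convergence at $t=0$ of the integral, so that the integrand is a genuine absolutely integrable function and not a regularized one. The subtraction of $\sum_{k=-1}^n b_k t^k$ handles this, as noted above. There is no real obstacle.
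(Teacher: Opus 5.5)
Your proof is correct and is essentially the paper's: there the corollary is obtained as a direct consequence of Theorem~\ref{thm:zetasn} and Theorem~\ref{thm:integral-rep-Sv}, and the period claim follows from the convergent integral together with the Frullani treatment of $\mathbf{\hat r}_n(w)$ from Corollary~\ref{cor:base_field}. (In fact the displayed identity already appears verbatim as the intermediate step of the Mellin-transform computation in Section~\ref{sec:relation-to-Hurwitz}, before Theorem~\ref{thm:integral-rep-Sv} is invoked, so it can also be read off there without passing through $\mathbf{S}_n(w)$.)
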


As an immediate consequence, we obtain the following result for rational values of $w$.

\begin{corollary}
Let $n\geq 0$.

\begin{enumerate}
\item If $w\geq 1$ is an integer, then $
\zeta_H'(-n,w)$
is an exponential period over the base field $
\Q(t,e^{-t})\;.$

\item If $w=p/q\in\Q$, where $\gcd(p,q)=1$ and $q\geq 1$, then $
\zeta_H'\!\left(-n,\frac{p}{q}\right)$
is an exponential period over the base field
$
\Q(t,e^{-t/q})\;.$
\end{enumerate}
\end{corollary}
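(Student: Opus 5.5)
The plan is to specialize the integral representation of $\zeta_H'(-n,w)$ from the preceding corollary to rational $w$. Then I check that every ingredient in the integrand and in the additive constants lies in the stated smaller base field. By the preceding corollary,
$$
\zeta_H'(-n,w) = (-1)^{n} n! \left ( \int_0^{+\infty} \frac{1}{t^n} \left( \frac{1}{1-e^{-t}} - \sum_{k=-1}^{n} b_k t^{k} \right) \, \frac{e^{-wt} dt}{t}  +\mathbf{\hat r}_n(w)-b_n(w) H_n\right).
$$
The coefficients $b_k$ and the harmonic numbers are rational. So for $w\in\Q$ the integrand belongs to $\Q(t,e^{-t},e^{-wt})$. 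Likewise $b_n(w)\in\Q$, and $\mathbf{\hat r}_n(w)$ is a rational number plus a rational multiple of $\log w$.

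\textbf{Case (1).} For $w\geq 1$ an integer, $e^{-wt}=(e^{-t})^w\in\Q(t,e^{-t})$, so the integral is an exponential period over $\Q(t,e^{-t})$. Rational numbers are trivially exponential periods. For $\log w$, I would use the Frullani integral
$$
\log w=\int_0^{+\infty}\frac{e^{-t}-e^{-wt}}{t}\,dt,
$$
whose integrand lies in $\Q(t,e^{-t})$. Since exponential periods over a field form a $\Q$-algebra (sums and products of convergent integrals over semialgebraic-type domains), the whole expression is an exponential period over $\Q(t,e^{-t})$.

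\textbf{Case (2).} For $w=p/q$ with $\gcd(p,q)=1$, $q\ge1$, both $e^{-t}=(e^{-t/q})^q$ and $e^{-wt}=(e^{-t/q})^p$ lie in $\Q(t,e^{-t/q})$; this is the observation made after Corollary \ref{cor:base_field}. The condition $\Re w>0$ gives $p\geq 1$, so these are genuine powers. The Frullani representation of $\log(p/q)$ has integrand $(e^{-t}-e^{-pt/q})/t\in\Q(t,e^{-t/q})$. The conclusion then follows as in case (1).

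The main point needing care is not the computation but the bookkeeping of the definition of exponential period. I need that the convergence at $t=0$ of the subtracted integrand is preserved, which holds because it is the same integral as before. I also need that the class of exponential periods over a given base field is closed under $\Q$-linear combinations. I would cite the framework of \cite{MPM4} for this closure, or alternatively merge everything into a single integral by writing the constants as $\int_0^\infty c\,e^{-t}\,dt$.
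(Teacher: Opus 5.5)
Your proposal is correct and is essentially the paper's argument. The paper also obtains this as an immediate consequence of the preceding corollary. It specializes the base field $\Q(t,e^{-t},w,e^{-wt})$ to integer or rational $w$, using $e^{-t},e^{-pt/q}\in\Q(t,e^{-t/q})$. It handles the $\log w$ term by a Frullani-type integral, as you do.

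One small correction: you only need closure under $\Q$-linear combinations, which your single-integral merging over $(0,\infty)$ gives directly. The claim about products is unnecessary and not immediate for one-variable integrals of this form.
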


\subsection{Relation to Ramanujan-Candelpergher summation.}

We have a precise relation between our Ramanujan constants and
Ramanujan-Candelpergher summation (see Corollary \ref{cor:165_1}).

\begin{theorem} \label{thm:FormulaRC}
We have
\begin{align*}
\mathbf{S}_n(w) = \sum_{k \ge 1}^{\mathcal{R}} &(k+w)^n\log (k+w) +(-1)^{n+1} n! b_n(w) H_n +\\
&+\frac{(w+1)^{n+1}}{(n+1)^2}  - \frac{(w+1)^{n+1}}{n+1}\log(w+1)
+w^{n}\log w
\end{align*}
\end{theorem}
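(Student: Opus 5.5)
We combine Theorem \ref{thm:zetasn} with a known closed form for the Ramanujan–Candelpergher sum of $f(x)=(x+w)^n\log(x+w)$. Theorem \ref{thm:zetasn} gives
\[
\mathbf{S}_n(w) = -\zeta_H'(-n,w) + (-1)^{n+1} n!\, b_n(w) H_n,
\]
and the term $(-1)^{n+1} n!\, b_n(w)H_n$ already appears on the right-hand side of the statement. It therefore suffices to prove
\[
-\zeta_H'(-n,w) = \sum_{k\ge1}^{\mathcal R}(k+w)^n\log(k+w) + \frac{(w+1)^{n+1}}{(n+1)^2} - \frac{(w+1)^{n+1}}{n+1}\log(w+1) + w^n\log w .
\]

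The first step is to recall Candelpergher's characterization. For $f$ analytic of moderate growth on $\Re x>0$,
\[
\sum_{k\ge1}^{\mathcal R} f(k) = \lim_{N\to\infty}\Big(\sum_{k=1}^N f(k) - \int_1^{N} f(x)\,dx - \text{(divergent Euler--Maclaurin terms at } N)\Big).
\]
Equivalently, it equals the constant term of the asymptotic expansion of $\sum_{k=1}^N f(k)$, minus the constant term $C_N$ coming from the antiderivative at $N$, with the integral normalized to start at $1$. Concretely, if $F$ is an antiderivative of $f$, this gives
\[
\sum^{\mathcal R}_{k\ge1} f(k) = S_{\mathcal R}\big(f(k)\big) + F(1) - \mathrm{ct}_{N}\, F(N),
\]
where $\mathrm{ct}_N$ denotes the constant term in the asymptotic basis. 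I will take this from Candelpergher \cite{Can}, or from Corollary \ref{cor:165_1} if that is where the paper records it.

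Next we apply this to $f(x)=(x+w)^n\log(x+w)$ with antiderivative
\[
F(x) = \frac{(x+w)^{n+1}}{n+1}\log(x+w) - \frac{(x+w)^{n+1}}{(n+1)^2}.
\]
This gives
\[
F(1) = \frac{(w+1)^{n+1}}{n+1}\log(w+1) - \frac{(w+1)^{n+1}}{(n+1)^2},
\]
which accounts exactly for the two $(w+1)$-terms, with opposite signs, once the relation is solved for $\mathbf S_n(w)$.

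We then compute $\mathrm{ct}_N F(N)$. Expand $(N+w)^{n+1}\log(N+w)$ in the basis $N^m\log N$, $N^m$, using
\[
\log(N+w) = \log N + \sum_j (-1)^{j+1} \frac{w^j}{j N^j}.
\]
The constant term is
\[
\frac{1}{n+1}\sum_{j=1}^{n+1}\binom{n+1}{j}\frac{(-1)^{j+1}}{j}\,w^{n+1}\cdot(\text{sign bookkeeping}),
\]
a polynomial in $w$, while $(N+w)^{n+1}/(n+1)^2$ contributes $w^{n+1}/(n+1)^2$.

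Finally we relate $S_{\mathcal R}\big((k+w)^n\log(k+w)\big)$, with the sum starting at $k=1$, to $\mathbf S_n(w)$. The sum $\sum_{k=1}^{s}(k+w)^n\log(k+w)$ equals $\sum_{k=0}^{s-1}(k+w)^n\log(k+w) - w^n\log w + (s+w)^n\log(s+w)$. This produces the $w^n\log w$ term, together with the constant term of the expansion of $(s+w)^n\log(s+w)$, which is again a polynomial in $w$.

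The main obstacle is to check that all the leftover polynomial constant terms cancel. These come from $\mathrm{ct}\,F(N)$ and from $\mathrm{ct}\,(s+w)^n\log(s+w)$. Their total should vanish by the identity
\[
\sum_{j=1}^{m}\binom{m}{j}\frac{(-1)^{j+1}}{j} = H_m,
\]
with the resulting harmonic-number pieces cancelling against each other. If they do not cancel exactly, the discrepancy must be absorbed into the $b_n(w)H_n$ term from Theorem \ref{thm:zetasn}. I would verify the bookkeeping first in the cases $n=0$ and $w=0$, against Corollary \ref{cor:Adamchik_formula}, before doing the general computation.
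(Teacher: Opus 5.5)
Your Step 1 identity
\[
\sum^{\mathcal R}_{k\ge1} f(k) = S_{\mathcal R}\big(f(k)\big) + F(1) - \mathrm{ct}_N F(N)
\]
is false, and this is a genuine gap. Already for $f\equiv 1$ it gives $0+1-0=1$, whereas $\sum^{\mathcal R}_{k\ge1}1=\int_0^1 x\,dx=\frac12$. Compare the Euler--Maclaurin formula on $[1,N]$ with Candelpergher's Euler--Maclaurin expression for $\sum^{\mathcal R}_{k\ge1}f(k)$, which consists of boundary terms at $1$ plus the remainder on $[1,+\infty)$. The correct relation is
\[
\sum^{\mathcal R}_{k\ge1} f(k) = S_{\mathcal R}\big(f(k)\big) + F(1) - \mathrm{ct}_N F(N) - \mathrm{ct}_N\Big[\tfrac12 f(N)+\sum_{k\ge1}\tfrac{B_{2k}}{(2k)!}f^{(2k-1)}(N)\Big].
\]
As a check, for $f(x)=x$ this gives $\frac12-\frac1{12}=\frac5{12}=\int_0^1\frac{x(x+1)}{2}\,dx$. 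For $f(x)=(x+w)^n\log(x+w)$ the bracket you dropped does not vanish. From $f^{(j)}(x)=\frac{n!}{(n-j)!}(x+w)^{n-j}\big(\log(x+w)+H_n-H_{n-j}\big)$ one gets $\mathrm{ct}_N f^{(j)}(N)=\frac{n!}{(n-j)!}H_n w^{n-j}$ for $0\le j\le n$, and $0$ for $j>n$. Using $b_{2k-1}=\frac{B_{2k}}{(2k)!}$, $b_{2k}=0$ for $k\ge1$, and $b_n(w)=\sum_{\ell=0}^{n+1}b_{n-\ell}\frac{(-1)^\ell}{\ell!}w^\ell$, the bracket has constant term $H_n\big((-1)^{n+1}n!\,b_n(w)+w^n-\frac{w^{n+1}}{n+1}\big)$. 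This boundary contribution is exactly what carries the Bernoulli numbers that appear in $b_n(w)$.

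Consequently your final bookkeeping cannot close. With your relation the leftover constants are $H_nw^n$, from $(s+w)^n\log(s+w)$, and $-\frac{H_nw^{n+1}}{n+1}$, from $\mathrm{ct}_NF(N)=\frac{w^{n+1}}{n+1}\big(H_{n+1}-\frac1{n+1}\big)$. They do not cancel, and their sum is not $-(-1)^{n+1}n!\,b_n(w)H_n$, so for $n\ge1$ you would obtain a wrong formula. Nor can the discrepancy be ``absorbed'' into the $b_n(w)H_n$ term of Theorem~\ref{thm:zetasn}. Your Steps 1--3 relate $\sum^{\mathcal R}$ directly to $\mathbf S_n(w)$ and never involve $\zeta_H'$, so the reduction through Theorem~\ref{thm:zetasn} does no work and that term has to come out of the computation itself. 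Your proposed $n=0$ test cannot detect the problem, since every offending term carries the factor $H_0=0$. With the corrected relation, the bracket cancels both leftovers and gives $\sum^{\mathcal R}_{k\ge1}f(k)=\mathbf S_n(w)-w^n\log w+F(1)-(-1)^{n+1}n!\,b_n(w)H_n$. That is the statement, obtained with no use of $\zeta_H$. The paper avoids boundary terms altogether. It uses $\sum^{\mathcal R}=\int_0^1\varphi$ with the explicit interpolant $\varphi(x)=\zeta_H'(-n,w+x)-\zeta_H'(-n,w)+(x+w)^n\log(x+w)-w^n\log w\in\mathcal O^\pi$. It computes $\int_0^1\zeta_H'(-n,x+w)\,dx$ by continuing $\int_0^1\zeta_H(s,x+w)\,dx=\frac{w^{1-s}}{s-1}$ in $s$, and only then invokes Theorem~\ref{thm:zetasn}.
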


Observe that we have
$$
\sum_{k=0}^{s-1} (k+w+1)^n \log (k+w+1) -\sum_{k=0}^{s-1} (k+w)^n \log (k+w) =
(s+w)^n\log(s+w) -w^n\log w \ .
$$
In the asymptotics when $s\to +\infty$, the constant term
of $(s+w)^n\log(s+w)$ is the same as the one
$$
(s+w)^n\log \left (1+\frac{w}{s} \right ) = \left (\sum_{l=0}^n \binom{n}{l} s^l w^{n-l}
\right ). \left (-\sum_{k\geq 1} \frac{(-1)^k}{k} \left (\frac{w}{s}\right )^k \right )
$$
which is obtained for $k=l$,
$$
w^n\sum_{k=1}^n \binom{n}{k} \frac{(-1)^k}{k} = H_n w^n
$$
using the identity
$$
H_n=-\sum_{k=1}^n \binom{n}{k} \frac{(-1)^k}{k}  \ .
$$
Hence we have for $n\geq 0$,
$$
\mathbf{S}_n(w+1) -\mathbf{S}_n(w) =-w^n\log w+ w^n H_n \; .
$$
Taking the limit $w\to 0+$ we have $\lim_{w\to 0+} \mathbf{S}_n(w) = \mathbf{S}_n(1)$ for $n\geq 1$. Under natural assumptions, a Ramanujan-Candelpergher sum
depending on a parameter behaves analytically with respect to the parameter (see \cite{Can} Chapter 3, Theorem 9, p.62). In this case we apply the result at $w=1$. We obtain as Corollary:

\begin{corollary} \label{cor:w=1}
We have for $n\geq 1$,
$$
\mathbf{S}_n(1) =\lim_{w\to 0+} \mathbf{S}_n(w) = \sum_{k \ge 1}^{\mathcal{R}} k^n\log (k) +\frac{1}{(n+1)^2}+ \frac{B_{n+1}}{n+1}H_n \ ,
$$
and
$$
\sum_{k \ge 1}^{\mathcal{R}} k^n\log (k)  =-\zeta'(-n) - \frac{1}{(n+1)^2}
$$
\end{corollary}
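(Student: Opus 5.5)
The plan is to obtain the corollary by letting $w\to 0+$ in Theorem \ref{thm:FormulaRC}, treating each term separately, and then to combine the result with Corollary \ref{cor:Adamchik_formula}. Throughout, $n\geq 1$ is fixed.

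\emph{Step 1: the left-hand side.} I would first justify $\lim_{w\to 0+}\mathbf{S}_n(w)=\mathbf{S}_n(1)$. For this I use the shift identity derived just above,
$$
\mathbf{S}_n(w+1)-\mathbf{S}_n(w) = -w^n\log w + w^n H_n .
$$
The map $w\mapsto \mathbf{S}_n(w+1)$ is continuous at $w=0$ in the region $\Re(w+1)>0$. This can be read off Theorem \ref{thm:integral-rep-Sv}: the integrand is dominated uniformly for $w$ near $1$, and $\mathbf{\hat r}_n$ is continuous at $w=1$. Alternatively it follows from Theorem \ref{thm:zetasn}, since $\zeta_H'(-n,\cdot)$ and $b_n(\cdot)$ are continuous there. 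Because $n\geq 1$, both $w^n\log w\to 0$ and $w^nH_n\to 0$, so $\mathbf{S}_n(w)\to \mathbf{S}_n(1)$.

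\emph{Step 2: the explicit terms on the right.} As $w\to 0+$, the polynomial terms behave as follows:
$$
\frac{(w+1)^{n+1}}{(n+1)^2}\to \frac{1}{(n+1)^2}, \qquad \frac{(w+1)^{n+1}}{n+1}\log(w+1)\to 0, \qquad w^n\log w\to 0 .
$$
For the Bernoulli term, $b_n(w)\to b_n(0)=b_n(1)=(-1)^{n+1}B_{n+1}/(n+1)!$, using the identity recalled before Corollary \ref{cor:Adamchik_formula}. Hence
$$
(-1)^{n+1}n!\,b_n(w)H_n \to \frac{B_{n+1}}{n+1}H_n .
$$

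\emph{Step 3: the Ramanujan--Candelpergher sum.} It remains to show that
$$
\sum^{\mathcal R}_{k\geq 1}(k+w)^n\log(k+w)\longrightarrow \sum^{\mathcal R}_{k\geq 1}k^n\log k .
$$
I would invoke \cite{Can}, Chapter 3, Theorem 9, on the analytic dependence of the sum on a parameter. The family is $f_w(x)=(x+w)^n\log(x+w)$, and I would take $w$ in a small complex disc around $0$, say $|w|<1/2$. For such $w$ and $\Re x>1/2$, the point $x+w$ stays in the right half-plane, so $f_w$ is holomorphic in $x$ on a half-plane containing $\Re x\geq 1$ and jointly holomorphic in $(x,w)$. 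The growth $f_w(x)=O(|x|^{n+1})$ is of exponential type zero, uniformly in $w$.

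\emph{Main obstacle.} The key point, and the step I expect to be most delicate, is checking that these uniform hypotheses match exactly those of Candelpergher's theorem. This matters because $w=0$ lies on the boundary of the region $\Re w>0$ where Theorem \ref{thm:FormulaRC} holds. The remedy is that the sum is defined by the values of $f_w$ only on $\Re x\geq 1$, so the parameter can be pushed slightly past $0$. Once the hypotheses are verified, the sum is analytic in $w$ near $0$, hence continuous, and the first formula follows.

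\emph{Step 4: the second formula.} Finally, I would substitute the value of $\mathbf{S}_n(1)$ from Corollary \ref{cor:Adamchik_formula}, namely
$$
\mathbf{S}_n(1) = -\zeta'(-n)+\frac{B_{n+1}}{n+1}H_n ,
$$
into the first formula. The terms $\frac{B_{n+1}}{n+1}H_n$ cancel, leaving
$$
\sum^{\mathcal R}_{k\geq 1} k^n\log k=-\zeta'(-n)-\frac{1}{(n+1)^2}.
$$
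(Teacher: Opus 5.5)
Your proposal is correct and is essentially the paper's argument: the shift identity $\mathbf{S}_n(w+1)-\mathbf{S}_n(w)=-w^n\log w+w^nH_n$ gives $\lim_{w\to 0+}\mathbf{S}_n(w)=\mathbf{S}_n(1)$ for $n\geq 1$, Candelpergher's parameter theorem (\cite{Can}, Chapter 3, Theorem 9) lets you pass to the limit in the Ramanujan--Candelpergher sum of Theorem \ref{thm:FormulaRC}, and Corollary \ref{cor:Adamchik_formula} then yields the second identity. Your treatment of the boundary point $w=0$, restricting to $\Re x>1/2$ so that the parameter can range over a full disc around $0$, is what the paper's remark about applying the result ``at $w=1$'' for the shifted family amounts to, and it is somewhat more careful than the paper's one-line appeal to Candelpergher.
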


This formula can be found in Coppo's article \cite{Cop} (see Theorem 1,
where in his notation $\mathbf{S}_n(1) =\log A_n$). Indeed, Coppo's formula
is a particular case of a previous more general formula by Candelpergher (top of page 77 of \cite{Can}). Candelpergher derived it directly (see also his formula at the bottom of page 91).

Coppo's proof relies on a formula that he attributes to Adamchik \cite{Ad}
and uses the Bendersky Gamma function and its properties \cite{Be}.
The formula attributed to Adamchik is
$$
\mathbf{S}_n(1) =-\zeta'(-n) + \frac{H_n B_{n+1}}{n+1}
$$
appears in earlier literature, for instance in  Choudhury \cite{Ch}\footnote{We are indebted to Coppo for providing this reference that he discovered after publishing his article.},
and was probably well known to Ramanujan in  view of the formulas appearing in his notebooks (\cite{Be} p.276).

\subsection{Relation to Bendersky Gamma function.}

Bendersky defined and studied in \cite{Be} a hierarchy of Gamma functions $(\hat \Gamma_n)_{n\geq 0}$
that satisfy the functional equation, for $n\geq 0$ and $\Re w>0$,
$$
\hat \Gamma_n(w+1) = w^{w^n} \hat \Gamma_n(w) \ ,
$$
with $\hat \Gamma_n(1)=1$.
We have that $\hat \Gamma_0$ is Euler Gamma function. For $n\geq 0$, $\hat \Gamma_n$ is
holomorphic and without zeros for $\Re w>0$ and we can consider $\log \hat \Gamma_n(w)$
for $\Re w >0$ (using the principal branch of the logarithm function at $w=1$).
Much earlier than Bendersky, in 1860 Kinkelin \cite{Ki} defined and
studied $\hat \Gamma_1$, that he named $G$,
and hinted at the interest of studying the hierarchy $(\hat \Gamma_n)_{n\geq 0}$.
These Bendersky Gamma functions are not to be
confused with the other hierarchy of meromorphic Gamma functions due to Barnes.
Unfortunately, both Kinkelin and Barnes (\cite{Ba}) gave the same
name $G$ to the first function in their
hierarchy that is not Euler Gamma function. This created some confusion in the literature.

Also, we have for $n\geq 0$,
\begin{equation}\label{eq:1}
\log \hat \Gamma_n(w)=\zeta'_H(-n, w) -\zeta'(-n) \ .
\end{equation}
It is unfortunate that this Lerch type formula was missing
from Bendersky's memoir. Its omission created
more confusion.  Milnor \cite{Mi}
defined a generalization of the Gamma function using the
partial derivative for Hurwitz zeta function, which is
the above formula (\ref{eq:1})  without
the constant term $-\zeta'(-n)$. He was not aware of the prior
construction of these functions by Bendersky 50 years before.
As far as the authors know, the identification of Milnor's Gamma function
with Bendersky's Gamma function was only pointed out recently by the second author
in \cite{PM} (see last section). Several authors continued to use
Milnor's Gamma function without
being aware of Bendersky's Gamma hierarchy. An example of this is the
article by  Kurokawa and Ochiai \cite{KuOch} where the above
formula was established for
a normalization of Milnor Gamma function (see Theorem 2).

Now, using the previous relation from Theorem \ref{thm:zetasn} of
the generalized Stirling-Ramanujan constants
$S_n(w)$ with Hurwitz zeta function, we have:

\begin{theorem}
 Let $n\geq 0$ be an integer and $w\in \C$ with $\Re w >0$. We have
$$
\mathbf{S}_n(w) =-\log \hat \Gamma_n(w) -\zeta'(-n) + (-1)^{n+1} n! \, b_n(w)\, H_n \ .
$$
\end{theorem}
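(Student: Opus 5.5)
The statement follows by substituting the Lerch type formula (\ref{eq:1}) into Theorem \ref{thm:zetasn}. Theorem \ref{thm:zetasn} gives
$$
\mathbf{S}_n(w) = -\zeta_H'(-n,w) +(-1)^{n+1} n!\, b_n(w) H_n .
$$
Formula (\ref{eq:1}) can be rewritten as $\zeta'_H(-n,w)=\log \hat \Gamma_n(w)+\zeta'(-n)$. Substituting this gives exactly
$$
\mathbf{S}_n(w) =-\log \hat \Gamma_n(w) -\zeta'(-n) + (-1)^{n+1} n!\, b_n(w) H_n .
$$

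The only substantive step is to justify (\ref{eq:1}) with the stated branch convention. I would argue by uniqueness, as follows.

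Define $F(w)=\zeta'_H(-n,w)-\zeta'(-n)$ on $\Re w>0$.
\begin{enumerate}
\item Since $\zeta_H(s,w)-\zeta_H(s,w+1)=w^{-s}$, differentiating in $s$ at $s=-n$ gives $F(w+1)-F(w)=w^n\log w$. This is the additive form of the functional equation $\hat\Gamma_n(w+1)=w^{w^n}\hat\Gamma_n(w)$.
\item Since $\zeta_H(s,1)=\zeta(s)$, we have $F(1)=0$, which matches $\log\hat\Gamma_n(1)=0$ for the principal branch.
\item $F$ is holomorphic on $\Re w>0$.
\end{enumerate}
These three properties hold for both $F$ and $\log\hat\Gamma_n$.

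It remains to show $F=\log\hat\Gamma_n$. I see two possible routes.

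\emph{Route 1 (Bohr--Mollerup uniqueness).} The difference $F-\log\hat\Gamma_n$ is $1$-periodic and vanishes at $1$. Bendersky characterizes $\hat\Gamma_n$ by an asymptotic normalization of polynomial-logarithmic type as $w\to+\infty$. The Hurwitz zeta side satisfies the same normalization, by the standard asymptotic expansion of $\zeta'_H(-n,w)$ for large $w$. A periodic holomorphic function with such controlled growth must be constant, and the constant is $0$ by the value at $1$.

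\emph{Route 2 (more self-contained).} Use the preceding relation $\mathbf{S}_n(w+1)-\mathbf{S}_n(w)=-w^n\log w+w^nH_n$. Combined with Theorem \ref{thm:zetasn}, this is consistent with the difference equation. Then identify Bendersky's definition of $\hat\Gamma_n$, which is given as a limit of partial products $\prod_{k<s}(k+w)^{(k+w)^n}$ suitably renormalized. Taking logarithms, that limit is precisely minus the constant term of $\sum_{k=0}^{s-1}(k+w)^n\log(k+w)$, corrected at $w=1$. This would give $\log\hat\Gamma_n(w)=-(\mathbf{S}_n(w)-\mathbf{S}_n(1))+$ explicit polynomial terms, which matches the claimed formula after using Corollary \ref{cor:Adamchik_formula}.

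The main obstacle is the identification step. Since the paper asserts (\ref{eq:1}) before the statement, I would cite it directly, with \cite{Be,KuOch,PM} as references. If a check is needed, I would verify the growth normalization via the asymptotic expansion of $\zeta'_H(-n,w)$.
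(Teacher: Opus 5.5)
Your proposal is correct and is the paper's own argument: the theorem follows by substituting the Lerch-type formula (\ref{eq:1}), which the paper states without proof and attributes to the literature, into Theorem \ref{thm:zetasn}. Your extra justifications of (\ref{eq:1}) go beyond what the paper does, but one point in Route 1 needs correcting. A $1$-periodic function whose growth is controlled only along the real axis need not be constant (e.g.\ $\sin 2\pi w$); what you actually need is that both sides have the same expansion up to $o(1)$ as $w\to+\infty$, so that the periodic difference $D$ satisfies $D(w)=\lim_{k\to\infty}D(w+k)=0$.
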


Using this result we can construct Bendersky's Gamma function in a very direct
way by using the construction of Generalized Stirling-Ramanujan
constants $\mathbf{S}_n(w)$. Another Corollary using formula (\ref{eq:1}) is

\begin{corollary}
Let $n\geq 0$.
\begin{enumerate}
\item
For $\Re w >0$, $\log \hat \Gamma_n(w)$ is an exponential period over the base field $
\Q(t,e^{-t}, w, e^{-wt})$.
\item If $w\geq 1$ is an integer, then $\log \hat \Gamma_n(w)$
is an exponential period over the base field $
\Q(t,e^{-t})$.

\item If $w=p/q\in\Q$, where $\gcd(p,q)=1$ and $q\geq 1$, then $
\log \hat \Gamma_n(p/q)$
is an exponential period over the base field
$\Q(t,e^{-t/q})$.
\end{enumerate}
\end{corollary}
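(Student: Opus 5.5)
The corollary follows by combining formula (\ref{eq:1}) with the earlier corollary that $\zeta_H'(-n,w)$ is an exponential period. The plan is to write
$$
\log \hat \Gamma_n(w)=\zeta'_H(-n,w)-\zeta'(-n)=\zeta'_H(-n,w)-\zeta_H'(-n,1),
$$
and show that each term is an exponential period over the base field in question. Exponential periods over a fixed base field form a $\Q$-algebra, and in particular are closed under subtraction. The difference is then an exponential period over the compositum of the base fields used for the two terms.

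For item (1), the corollary following Theorem \ref{thm:zetasn} shows that $\zeta_H'(-n,w)$ is an exponential period over $\Q(t,e^{-t},w,e^{-wt})$. Specializing the same statement to $w=1$ shows that $\zeta'(-n)=\zeta_H'(-n,1)$ is an exponential period over $\Q(t,e^{-t})$. Equivalently, Corollary \ref{cor:Adamchik_formula} expresses $\zeta'(-n)$ through $\mathbf{S}_n(1)$ and the rational number $\frac{B_{n+1}}{n+1}H_n$, which gives the same conclusion. Since $\Q(t,e^{-t})\subset \Q(t,e^{-t},w,e^{-wt})$, both terms are exponential periods over the larger field, and so is their difference.

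For items (2) and (3), I specialize the base field. If $w\ge1$ is an integer, then $w\in\Q$ and $e^{-wt}=(e^{-t})^w\in\Q(t,e^{-t})$, so the field collapses to $\Q(t,e^{-t})$. Alternatively, one can use the functional equation to get $\log\hat\Gamma_n(w)=\sum_{j=1}^{w-1} j^n\log j$. Each $\log j$ is an exponential period over $\Q(t,e^{-t})$, by Frullani: $\log j=\int_0^\infty (e^{-t}-e^{-jt})\,dt/t$. If $w=p/q$, then the remark after Corollary \ref{cor:base_field} gives $\Q(t,e^{-t},w,e^{-wt})=\Q(t,e^{-t/q})$, and this field contains $\Q(t,e^{-t})$. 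So the same subtraction argument works.

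The main technical point is the branch of the logarithm. Formula (\ref{eq:1}) must hold for the chosen branch of $\log\hat\Gamma_n$, namely the principal branch normalized at $w=1$. Both sides are holomorphic on $\Re w>0$ and agree at $w=1$, where both vanish. They satisfy the same difference equation, and they agree by the identity (\ref{eq:1}) as stated, so no $2\pi i$ ambiguity arises. I would also confirm that "exponential period over $\mathbb K$" is stable under enlarging $\mathbb K$ and under $\Q$-linear combinations. This is immediate from the definition, since integrands can be added over a common domain or compared via disjoint union of domains.
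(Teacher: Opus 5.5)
Your proposal is correct and follows the paper's own route. You obtain $\log\hat\Gamma_n(w)=\zeta_H'(-n,w)-\zeta_H'(-n,1)$ from formula \eqref{eq:1} and apply the preceding corollary that $\zeta_H'(-n,w)$ is an exponential period over $\Q(t,e^{-t},w,e^{-wt})$, using the field collapse $\Q(e^{-t},e^{-pt/q})=\Q(e^{-t/q})$ for rational $w$; your alternative for integer $w$, writing $\log\hat\Gamma_n(w)=\sum_{j=1}^{w-1}j^n\log j$ and applying Frullani to each $\log j$, is also valid and more elementary.
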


\subsection{Other applications.}

\subsubsection{Determinants of Laplacians.}
As Corollary of the previous results, we obtain some transalgebraic information about
the classical determinants of Laplacians.

Let $(M, g)$ be a compact, connected Riemannian manifold of dimension $d$. Let $\Delta_M$ denote the Laplace-Beltrami operator acting on smooth functions on $M$ associated with the metric $g$. The operator $\Delta_M$ possesses a non-negative, unbounded spectrum
\[ \text{Spec}(\Delta_M) = \{0 = \lambda_0 < \lambda_1 \le \lambda_2 \le \cdots \}, \]
to which it is associated the spectral zeta function defined for $\text{Re}(s) > \frac{d}{2}$ by the series
\[ \zeta(s, \Delta_M) = \sum_{k=1}^\infty \lambda_k^{-s}\;. \]
 It is a classical result that $\zeta(s, \Delta_M)$ extends to a meromorphic function on $\mathbb{C}$ that is analytic at $s = 0$. The \textit{zeta-regularized determinant} of $\Delta_M$ is defined as
\[ \det\nolimits \Delta_M = \exp\left( -\zeta'(0, \Delta_M) \right). \]

 Zeta-regularized determinants carry essential geometric information and have been studied extensively.  However, explicit computations are known only in relatively few cases where
 the spectrum is known. For instance, Kumagai \cite{Kum} obtained explicit formulas for the regularized determinant $\det\nolimits \Delta_{\mathbb{S}^d}$ of the $d$-dimensional sphere endowed with its standard metric coming from its embedding in the Euclidean space $\mathbb S^d\subset \RR^{d+1}$. In Kumagai's formulas \cite{Kum}, there are explicit integers 
 $(\alpha_{d,j})_{0\leq j\leq d}$ and $(\tau_{d,j})_{0\leq j\leq d}$, and a rational
 number $\gamma_d$, such that
 $$
 \log \det\nolimits \Delta_{\mathbb{S}^d} = \gamma_d +\sum_{k=1}^d \alpha_{d,k} \log k +  \sum_{k=0}^{d} \tau_{d,k} \zeta'(-k),
 $$
The Frullani integral
$$
\log (s+1) =\int_0^{+\infty} (1-e^{-st}) \, \frac{e^{-t} dt}{t}
$$
shows that for any integer $k\geq 2$, $\log k$ is an exponential period over the base
field $\Q(t,e^{-t})$. Using our previous results we also have that $\zeta'(-k)$
is also an exponential period over the base field $\Q(t,e^{-t})$. This provides new
insight into the transalgebraic nature of these determinants of Laplacians:

\begin{theorem}\label{thm:log-determinants_are_periods}
For every integer $d\geq 1$, the logarithm of the zeta-regularized determinant of the Laplacian on the sphere $\log \det\nolimits \Delta_{\mathbb S^d}$
is an exponential period over the field $\mathbb{Q}(t,e^{-t})$.
\end{theorem}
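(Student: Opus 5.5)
The plan is to combine Kumagai's closed formula with the closure of exponential periods over $\mathbb{Q}(t,e^{-t})$ under $\mathbb{Q}$-linear combinations. Kumagai \cite{Kum} gives
$$
\log \det\nolimits \Delta_{\mathbb{S}^d} = \gamma_d +\sum_{k=1}^d \alpha_{d,k} \log k +  \sum_{k=0}^{d} \tau_{d,k} \zeta'(-k),
$$
with $\gamma_d\in\Q$ and $\alpha_{d,k},\tau_{d,k}\in\Z$. It therefore suffices to show three things: rational numbers are exponential periods over $\Q(t,e^{-t})$; each $\log k$ is one; each $\zeta'(-k)$ with $0\le k\le d$ is one. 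Closure under addition and rational scalar multiplication holds because the set of exponential periods over a base field containing $\Q$ is a $\Q$-vector space, indeed a ring. I would check this quickly from the definition used in the paper, for instance by taking disjoint unions or products of integration domains, or by rescaling an integrand.

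For the constants, $\gamma_d$ is a rational number. It can be written trivially as an exponential period, for example $\gamma_d=\int_0^{+\infty}\gamma_d\,e^{-t}\,dt$. For the logarithms, $\log 1=0$. For $k\ge 2$ I apply the Frullani integral stated just before the theorem with $s=k-1$:
$$
\log k=\int_0^{+\infty}(1-e^{-(k-1)t})\frac{e^{-t}\,dt}{t}.
$$
The integrand lies in $\Q(t,e^{-t})$, so $\log k$ is an exponential period over $\Q(t,e^{-t})$.

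For the zeta derivatives, I use Corollary \ref{cor:Adamchik_formula}:
$$
\zeta'(-k)=-\mathbf{S}_k(1)+\frac{B_{k+1}}{k+1}H_k.
$$
This holds for all $k\ge0$; for $k=0$ it reads $\zeta'(0)=-\mathbf{S}_0(1)$, since $H_0=0$. Now specialize Theorem \ref{thm:integral-rep-Sv} at $w=1$. The factor $e^{-wt}$ becomes $e^{-t}$, and $\log w=0$. The correction $\mathbf{\hat r}_k(1)$ is then a rational combination of the $b_j$ and harmonic numbers, hence rational. So $\mathbf{S}_k(1)$ equals a rational number plus a rational multiple of an absolutely convergent integral over $(0,+\infty)$ whose integrand lies in $\Q(t,e^{-t})$. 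This is exactly the case $w=1$ of Corollary \ref{cor:base_field}, where $\Q(t,e^{-t},1,e^{-t})=\Q(t,e^{-t})$.

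The only step needing care is the last one: one must confirm that the integrand is genuinely integrable with an integrand in the base field. Near $0$ the subtraction of the Taylor terms up to order $n$ makes the integrand $O(1)$. At $+\infty$ the factor $e^{-t}$ dominates the polynomial terms $t^{k-n-1}$. Both checks are already implicit in Theorem \ref{thm:integral-rep-Sv}. Assembling the pieces by linearity then gives the theorem.
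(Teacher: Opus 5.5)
Your proposal is correct and follows the paper's route: Kumagai's closed formula, the Frullani integral for $\log k$, and Corollary~\ref{cor:Adamchik_formula} combined with the $w=1$ case of Theorem~\ref{thm:integral-rep-Sv}, all assembled by $\Q$-linear combination. Your additional checks fill in details the paper leaves implicit: that $\mathbf{\hat r}_k(1)$ is rational, that the case $k=0$ is covered, and that the integrand is integrable.
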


It is interesting to note that the base field $\mathbb{Q}(t,e^{-t})$ does
not depend on the dimension $d\geq 1$.
 Other explicit computations for spheres and   projective spaces  have been treated in  \cite{Ha} and  \cite{KuKo} using different methods, with the formulas
 having the same form, so the result generalizes.

We can write down the explicit period integrals.  For dimension $d=1,2, 3$
we have the following formulas:

\begin{proposition}\label{prop:examples}
In dimensions $d=1,2,3$ we have the explicit representation of $\log \det\nolimits \Delta_{\mathbb S^d}$
as an exponential period over the field $\mathbb{Q}(t,e^{-t})$,

\begin{align*}
\log \det \Delta_{\mathbb S^1} &= -4
\int_0^{+\infty}
\left(
\frac{1}{1-e^{-t}}
-\frac1t
-\frac12
-t
\right)
\frac{e^{-t}}{t}\,dt \\
\log \det \Delta_{\mathbb S^2}
&=
4
\int_0^{+\infty}
\frac1t
\left(
\frac{1}{1-e^{-t}}
-\frac1t
-\frac12
-\frac{t}{12}
+\frac{7}{24}t^2
\right)
\frac{e^{-t}}{t}\,dt\\
\log \det \Delta_{\mathbb S^3}
&=-2\int_0^\infty \left(\left(1+\frac{2}{t^2}\right)\frac{1}{1-e^{-t}}
-\frac{2}{t^3}
-\frac{7}{6t}
-\frac{1}{t^2}
-\frac{37}{36}t
-\frac{1}{2}e^{-t} \right)\frac{e^{-t}dt}{t}\;.
\end{align*}

\end{proposition}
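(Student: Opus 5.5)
The plan is to start from Kumagai-type closed formulas for $\log\det\Delta_{\mathbb S^d}$, $d=1,2,3$, and rewrite each ingredient as an integral against $e^{-t}dt/t$. The ingredients are the constants $\zeta'(-k)$, the logarithms $\log k$, and the rational constants. Concretely, I will use
\begin{align*}
\log\det\Delta_{\mathbb S^1}&=-4\zeta'(-1)=\log(16\pi^2)-\log(4\pi^2) \ \text{(circle of length $2\pi$)},\\
\log\det\Delta_{\mathbb S^2}&=\tfrac12-4\zeta'(-1),\\
\log\det\Delta_{\mathbb S^3}&=\log\pi-\tfrac12\zeta(3)/\pi^2 \ \text{(i.e.\ $2\zeta'(-2)$-type terms plus $\log 2$ terms)}.
\end{align*}
I will double-check each against Kumagai's table. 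In every case the answer is a $\Q$-linear combination of $\zeta'(-k)$ ($k\le d$), of $\log 2$ and $\log\pi=\log 2\pi-\log2$ (with $\log 2\pi=2\zeta'(0)$ up to sign), and of rationals.

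The key step is then to substitute, for each $\zeta'(-k)$, the integral representation from the Corollary following Theorem~\ref{thm:zetasn} with $w=1$:
$$
\zeta'(-k)=(-1)^k k!\left(\int_0^{\infty}\frac1{t^k}\Big(\frac1{1-e^{-t}}-\sum_{j=-1}^k b_jt^j\Big)\frac{e^{-t}dt}{t}+\hat{\mathbf r}_k(1)-b_k(1)H_k\right).
$$
Here $\log w=0$ at $w=1$, so $\hat{\mathbf r}_k(1)$ is rational. I will then absorb every rational constant $c$ into the integrand. This works because $\int_0^\infty t\cdot e^{-t}dt/t=1$, so $c$ becomes the term $ct$ inside. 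Similarly, $\log 2$ is absorbed via the Frullani integral $\log2=\int_0^\infty(1-e^{-t})e^{-t}dt/t$, which explains the $-\tfrac12 e^{-t}$ term in the $d=3$ formula. After this, each quantity is a single integral of an element of $\Q(t,e^{-t})$ against $e^{-t}dt/t$.

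For $d=1$, the sum $-4\zeta'(-1)$ gives the integrand $\frac1t(\frac1{1-e^{-t}}-\frac1t-\frac12-\frac t{12})$ together with a rational correction. The correction comes from $\hat{\mathbf r}_1(1)-b_1(1)H_1$. I will combine the $\frac1t\cdot\frac{t}{12}$ and rational pieces into a multiple of $t$ and check that the result matches the displayed form. I expect the displayed normalization to absorb the $1/t$ factor through the identity for $\hat{\mathbf r}$. The case $d=2$ is analogous, with the $\tfrac12$ and the $H_1$-correction producing the $\tfrac{7}{24}t^2$ term. For $d=3$ I will combine $\zeta'(-2)$, $\zeta'(0)$ and $\log 2$ over the common integrand. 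This accounts for the factor $(1+2/t^2)$ multiplying $1/(1-e^{-t})$.

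The main obstacle is bookkeeping. I must get Kumagai's exact constants right and track the rational corrections $\hat{\mathbf r}_k(1)-b_k(1)H_k$ (e.g.\ $b_1=\tfrac1{12}$, $H_1=1$). I must also convert polynomial terms correctly, using $\int_0^\infty t^m e^{-t}dt/t=(m-1)!$ to move constants between powers of $t$. I will verify each final integrand by comparing it numerically with known values, for example $\log\det\Delta_{\mathbb S^2}=\tfrac12-4\zeta'(-1)$.
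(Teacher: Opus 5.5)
Your method is the paper's: write $\log\det\Delta_{\mathbb S^d}$ as a rational combination of the $\zeta'(-k)$ (equivalently of $S_k=\mathbf{S}_k(1)$, via Corollary~\ref{cor:Adamchik_formula}), $\log 2$ and rationals. Then insert the $w=1$ integral formulas, absorb rational constants as polynomial terms, and absorb $\log 2$ by Frullani. Your $d=2$ case is right. There $\hat{\mathbf r}_1(1)-b_1(1)H_1=\frac14-\frac1{12}=\frac16$, so $\frac12-4\zeta'(-1)=4I_1+\frac76$, and $\frac76=4\cdot\frac7{24}$ becomes the $\frac{7}{24}t^2$ term.

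\textbf{The case $d=1$.} Your input here is wrong, and the plan fails. On the unit circle the eigenvalues are $k^2$ with multiplicity $2$, so $\zeta(s,\Delta_{\mathbb S^1})=2\zeta(2s)$. Hence $\log\det\Delta_{\mathbb S^1}=-4\zeta'(0)=\log(4\pi^2)=4S_0$, not $-4\zeta'(-1)$; your expression $\log(16\pi^2)-\log(4\pi^2)=\log 4$ is yet another number. Since $4S_0\approx 3.676$ while $-4\zeta'(-1)\approx 0.662$, no redistribution of rational terms can turn the $\frac1t(\cdots)$ integrand coming from $\zeta'(-1)$ into the displayed one. There is no identity for $\hat{\mathbf r}$ that ``absorbs the $1/t$ factor''. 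What you need is Theorem~\ref{thm:integral-rep-Sv} with $n=0$, $w=1$. Then $\hat{\mathbf r}_0(1)=-b_{-1}H_1=-1$, so $S_0=-\int_0^{\infty}\bigl(\frac1{1-e^{-t}}-\frac1t-\frac12\bigr)\frac{e^{-t}dt}{t}+1$. Writing $1=\int_0^\infty t\,\frac{e^{-t}dt}{t}$ produces the $-t$ term, and multiplying by $4$ gives the display.

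\textbf{The case $d=3$.} The sign of your $\zeta(3)$ term is wrong. The correct value is $\log\det\Delta_{\mathbb S^3}=\log\pi+\frac{\zeta(3)}{2\pi^2}=-2\zeta'(-2)-2\zeta'(0)-\log 2$. Kumagai's printed formula (iii) contains a typo, which the paper corrects, so checking against his table will not catch this. With your sign you get $+2\zeta'(-2)=-2S_2$, hence a factor $1-\frac{2}{t^2}$ in front of $\frac1{1-e^{-t}}$ rather than $1+\frac2{t^2}$.

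With the correct sign, the computation goes as follows:
\begin{itemize}
\item $-2\zeta'(-2)=2S_2$, because $B_3=0$.
\item $\hat{\mathbf r}_2(1)=-\frac1{72}$, which gives $S_2=-2\int_0^\infty\frac1{t^2}\bigl(\frac1{1-e^{-t}}-\frac1t-\frac12-\frac t{12}-\frac{t^3}{72}\bigr)\frac{e^{-t}dt}{t}$.
\item $-\log 2=-2\int_0^\infty\bigl(\frac12-\frac{e^{-t}}{2}\bigr)\frac{e^{-t}dt}{t}$.
\end{itemize}
Now collect $2S_2+2S_0-\log 2$ under a single $-2\int_0^\infty(\cdot)\,\frac{e^{-t}dt}{t}$. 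The $-\frac12$ from $S_0$ cancels the $+\frac12$ from the Frullani term. The remaining terms combine as $\frac{1}{6t}+\frac1t=\frac{7}{6t}$ and $\frac{t}{36}+t=\frac{37}{36}t$, which is exactly the displayed formula.
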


The phenomenon observed in Theorem~\ref{thm:log-determinants_are_periods} is not isolated. It also occurs for more general manifolds with rich geometric structures, such as symmetric spaces and their quotients by finite groups. To illustrate this, we consider the first natural generalization of the result for spheres, namely lens spaces. These are quotients of odd-dimensional spheres by cyclic groups.

\begin{theorem}\label{thm:lens}
Let $L(q;p_0,\ldots,p_n)$ be a lens space. Then the logarithm of the zeta-regularized determinant of the Laplacian on $L(q;p_0,\ldots,p_n)$ is an exponential period over the field
\[
\mathbb{Q}(t,e^{-t}).
\]
\end{theorem}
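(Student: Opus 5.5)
The plan is to reduce $\log\det\Delta_L$ for $L=L(q;p_0,\ldots,p_n)$ (a quotient of $\mathbb S^{2n+1}$ by $\Z/q$) to a finite $\Q$-linear (or $\Q(\zeta_q)$-linear, Galois-invariant) combination of $\log k$ for integers $k$, of values $\zeta_H'(-j,a/q)$ with $1\le a\le q$, and of rational constants. Then we invoke the Corollary on rational Hurwitz values together with the Frullani integral.

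\emph{Spectral decomposition.} The eigenvalues of $\Delta_{\mathbb S^{2n+1}}$ are $k(k+2n)=(k+n)^2-n^2$, with eigenspaces the harmonic polynomials of degree $k$. The eigenfunctions on $L$ are the $\Z/q$-invariant ones. Their multiplicity $m(k)$ is obtained by averaging characters over the group, and I would show that $m(k)$ is a quasi-polynomial in $k$ of period $q$. Concretely, on each residue class $k\equiv a \pmod q$ it agrees with a polynomial $P_a(k)$ with rational coefficients. This follows from the generating function $\frac1q\sum_{g}\frac{1-t^2}{\prod_j(1-\zeta^{g p_j}t)(1-\zeta^{-gp_j}t)}$, which is rational with poles only at $q$-th roots of unity; the Galois-averaging over $g$ makes the coefficients rational.

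\emph{Zeta function.} Write $k=qm+a$. Then
$$\zeta(s,\Delta_L)=\sum_a\sum_m P_a(qm+a)\,\bigl((qm+a+n)^2-n^2\bigr)^{-s}.$$
This is the main obstacle: the factor $-n^2$ prevents a direct Hurwitz expression. I would factor $(k+n)^2-n^2=k(k+2n)$ and use the standard technique (as in Kumagai, or Kurokawa--Koyama) of splitting $\lambda^{-s}=k^{-s}(k+2n)^{-s}$ against the sum $\sum k^{-s}+\sum(k+2n)^{-s}$. The difference is expanded in a binomial series in $n^2/(k+n)^2$. Only finitely many terms contribute to the derivative at $s=0$, namely those with poles of the Hurwitz zeta at $s=1$ after shift, and they contribute rational multiples of residues and values $\zeta_H(-j,\cdot)$, which are rational for rational arguments. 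This yields
$$\zeta'(0,\Delta_L)=\sum_{a,j}c_{a,j}\,q^{j}\bigl(\zeta_H'(-j,\tfrac{a}{q})+\zeta_H'(-j,\tfrac{a+2n}{q})\bigr)+\sum c'_{a,j}\log q+\text{(rational)},$$
after rewriting $P_a(qm+a)$ in powers of $(m+a/q)$. The $\log q$ terms arise from $(q(m+w))^{-s}=q^{-s}(m+w)^{-s}$. The removal of the $k=0$ zero mode and shifts of $w$ by integers are handled by the identity $\zeta_H(s,w+1)=\zeta_H(s,w)-w^{-s}$, which adds only logarithms of rationals.

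\emph{Period property.} Each $\log k$ for a positive rational $k$ is an exponential period over $\Q(t,e^{-t})$ by the Frullani integral. Each $\zeta_H'(-j,a/q)$ is a priori a period over $\Q(t,e^{-t/q})$ by the Corollary. To descend to $\Q(t,e^{-t})$, I would make the substitution $t\mapsto qt$ in the integral of the Corollary on $\zeta_H'$. This turns $e^{-(a/q)t}$ into $e^{-at}$ and $\frac{1}{1-e^{-t}}$ into $\frac{1}{1-e^{-qt}}$, both in $\Q(t,e^{-t})$, while $dt/t$ is invariant. Hence every term is an exponential period over $\Q(t,e^{-t})$, and so is the finite rational combination $\log\det\Delta_L=-\zeta'(0,\Delta_L)$.
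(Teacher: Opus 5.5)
Your proposal is correct in outline, but it takes a different route from the paper. The paper never splits into residue classes. Instead, it imports \cite[Theorem~1.2]{Ha}, which reduces $\log\det\Delta_L$, up to rational terms, to $f'(0)$ with $f(s)=\sum_{k\ge1}\dim E_{k(k+2n)}\,k^{-s}$; the multiplicative-anomaly step you sketch is a black box there. It then uses two facts: Ikeda's generating function $F(z)=\sum_k\dim E_{k(k+2n)}z^k$ lies in $\Q(z)$, and $f$ has the Mellin representation $f(s)=\Gamma(s)^{-1}\int_0^\infty t^{s-1}\bigl(F(e^{-t})-1\bigr)\,dt$. Hence $f'(0)$ is directly an integral with integrand in $\Q(t,e^{-t})$. (Strictly, the polar part of $F(e^{-t})-1$ at $t=0$ must first be subtracted; this only adds rational constants.) That route needs no Hurwitz values at rational arguments and no change of variables.

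Your route stays inside the paper's own results and makes the structure explicit: a finite combination of $\zeta_H'(-j,a/q)$, $\zeta_H'(-j,(a+2n)/q)$, logarithms of rationals, and rationals. Your rescaling $t\mapsto qt$ is genuinely needed, since the Corollary for rational arguments only gives the base field $\Q(t,e^{-t/q})$. The price is that you must actually carry out the anomaly step, and ``only finitely many terms contribute'' holds only for the difference. Set $Z_1(s)=\sum_{k\ge1}m(k)k^{-s}$, $Z_2(s)=\sum_{k\ge1}m(k)(k+2n)^{-s}$ and $E(u)=\sum_{k\ge1}m(k)(k+n)^{-u}$, and expand all three in $x_k=n/(k+n)$. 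Two kinds of terms then cancel between $\zeta(s,\Delta_L)$ and $Z_1+Z_2$: the convergent contributions $\frac{n^{2i}}{i}E(2i)$, and the finite parts of $E$ at its poles (which would otherwise bring in digamma values). What remains is
\[
\zeta'(0,\Delta_L)-Z_1'(0)-Z_2'(0)=\sum_{i=1}^{n}\frac{n^{2i}}{i}\,\mathrm{Res}_{u=2i}E(u)\Bigl(\tfrac12 H_{i-1}-H_{2i-1}\Bigr)\in\Q ,
\]
which is rational because the residues of $E$ are rational. Finally, rationality of the $P_a$ follows simply because a polynomial taking integer values on an arithmetic progression has rational coefficients; no Galois argument is needed.
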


Note that the statement does not depend on the particular cyclic group.

\subsubsection{On the special values $\zeta(2n+1)/\pi^{2n}$.}

The values of the Riemann zeta function at odd values normalized by $\pi^{2n}$,
namely $\zeta(2n+1)/\pi^{2n}$ are directly related to Stirling-Ramanujan
sums $\mathbf S_{2n}(1)$. It results that
these values are exponential periods.

\begin{theorem}\label{thm:zeta(2n+1)}
 For $n\geq 1$, we have that $\zeta(2n+1)/\pi^{2n}$ are exponential
 periods over the base field
 $\mathbb Q(t, e^{-t})$. More precisely, we have
$$
\frac{\zeta(2n+1)}{\pi^{2n}} = (-1)^n 2^{2n+1} \left ( \int_0^{+\infty} \frac{1}{t^{2n}} \left( \frac{1}{1-e^{-t}} - \sum_{k=-1}^{2n} b_k t^{k} \right) \, \frac{e^{-t} dt}{t} +c_n \right )
 $$
where $c_n \in \mathbb Q$ is given by
$$
c_n= \sum_{\ell=1}^{2n+1} b_{2n-\ell} \frac{(-1)^{\ell}}{\ell!} H_{\ell}  \ .
$$
\end{theorem}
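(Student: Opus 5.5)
The plan is to derive the formula for $\zeta(2n+1)/\pi^{2n}$ from the integral representation of Theorem \ref{thm:integral-rep-Sv} at $w=1$, using the classical link between $\zeta'(-2n)$ and $\zeta(2n+1)$ via the functional equation.

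First I will take $w=1$ and replace $n$ by $2n$ in Theorem \ref{thm:integral-rep-Sv}. Since $\log 1=0$, the logarithmic part of $\mathbf{\hat r}_{2n}(1)$ drops out. The harmonic part becomes
$$\sum_{\ell=1}^{2n+1} b_{2n-\ell}\frac{(-1)^\ell}{\ell!}H_\ell=c_n .$$
This gives
$$\mathbf{S}_{2n}(1)=-(2n)!\left(\int_0^{+\infty}\frac{1}{t^{2n}}\left(\frac{1}{1-e^{-t}}-\sum_{k=-1}^{2n}b_kt^k\right)\frac{e^{-t}dt}{t}+c_n\right).$$

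Second, I will apply Corollary \ref{cor:Adamchik_formula} with index $2n$. For $n\geq 1$ we have $B_{2n+1}=0$, so $\mathbf{S}_{2n}(1)=-\zeta'(-2n)$.

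Third, I need the classical identity
$$\zeta'(-2n)=(-1)^n\frac{(2n)!}{2(2\pi)^{2n}}\zeta(2n+1).$$
To get it, differentiate the functional equation $\zeta(s)=2^s\pi^{s-1}\sin(\pi s/2)\Gamma(1-s)\zeta(1-s)$ at $s=-2n$. Since $\sin(\pi s/2)$ vanishes there, only the term containing its derivative survives. That derivative is $\frac{\pi}{2}\cos(-n\pi)=\frac{\pi}{2}(-1)^n$. Combining it with $2^{-2n}\pi^{-2n-1}\Gamma(2n+1)\zeta(2n+1)$ gives the displayed identity.

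Combining the three steps:
$$(-1)^n\frac{(2n)!}{2(2\pi)^{2n}}\zeta(2n+1)=(2n)!\,(I_n+c_n),$$
where $I_n$ denotes the integral. Hence
$$\frac{\zeta(2n+1)}{\pi^{2n}}=(-1)^n2^{2n+1}(I_n+c_n).$$

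The integrand lies in $\mathbb Q(t,e^{-t})$ and $c_n\in\mathbb Q$, so $\zeta(2n+1)/\pi^{2n}$ is an exponential period over that field. The rational constant can be absorbed into the integral, either as noted after Theorem \ref{thm:integral-rep-Sv} or simply because $\int_0^\infty c\,e^{-t}dt=c$.

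The main obstacle is only bookkeeping. I need to get the sign $(-1)^n$ and the power $2^{2n+1}$ right in the functional-equation step, and check that the convergence condition $\Re w>0$ is satisfied at $w=1$.
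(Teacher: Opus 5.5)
Your proposal is correct and follows essentially the paper's proof: the functional-equation identity $\zeta'(-2n)=(-1)^n\frac{(2n)!}{2(2\pi)^{2n}}\zeta(2n+1)$, then Corollary~\ref{cor:Adamchik_formula} with $B_{2n+1}=0$ to get $\mathbf{S}_{2n}(1)=-\zeta'(-2n)$, then Theorem~\ref{thm:integral-rep-Sv} at $w=1$, where $\mathbf{\hat r}_{2n}(1)=c_n$. The only cosmetic difference is that you differentiate the sine form of the functional equation at $s=-2n$, whereas the paper differentiates the cosine form $\zeta(1-s)=2(2\pi)^{-s}\cos(\pi s/2)\Gamma(s)\zeta(s)$ at $s=2n+1$; both give the same identity.
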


\subsection{Twisted Stirling--Ramanujan constants}

 Let $\chi:\mathbb Z\to \mathbb C$ be a periodic function of period $m$. Given two polynomials
\[
P,Q\in\C[s],
\]
with $\deg Q\geq1$ and leading coefficient of $Q$ having positive real part, we consider divergent series of the form
\[
\sum_{k=1}^{ms}\chi(k)P(k)\log Q(k).
\]

The first reduction consists in assuming that $\chi(0)=0$. Indeed,
\[
\sum_{k=1}^{ms}\chi(k)P(k)\log Q(k)
=
\sum_{k=1}^{ms}(\chi(k)-\chi(0))P(k)\log Q(k)
+
\chi(0)\sum_{k=1}^{ms}P(k)\log Q(k),
\]
and the asymptotic expansion of the second sum has already been studied. Hence, in the sequel, we assume
\[
\chi(0)=0 \ .
\]

 We have
\[
 \sum_{k=1}^{ms} \chi(k)\, P(k)\log Q(k)=\sum_{a=0}^{m-1} \chi(a)\sum_{\ell=0}^{s-1} P(a+\ell m)\log Q(a+\ell m)\;.
\]
It is natural to introduce the following definition.

\begin{definition}
Let $\chi$ be a periodic function of period $m\geq1$ with $\chi(0)=0$. The associated twisted Stirling--Ramanujan constant is defined by
\[
 S^{(P,Q)}_{\chi,m}
 :=
 \sum_{a=0}^{m-1}\chi(a)\,S^{(P,Q)}_{m,a}.
\]
where $S^{(P,Q)}_{m,a}=S_{\mathcal{R}}(P(a+(k-1)m)\log Q(a+(k-1)m))$\;.
\end{definition}

The terms $A(s)\log s+B(s)$ in the asymptotic expansion are periodic with period $m$ and play no role in the Ramanujan summation procedure.

\medskip

The purpose of this section is to prove that these twisted Stirling--Ramanujan constants are exponential periods. As in the untwisted case, the general situation reduces to a distinguished family of constants. More precisely, we consider the twisted constants
\[
\hat{S}^{(k+w)^n,(k+w)}(\chi)
\]
associated with the asymptotic expansion of
\[
\sum_{k=1}^{ms}\chi(k)(k+w)^n\log(k+w).
\]

Our main result, Theorem~\ref{thm:105_2}, shows that these constants are exponential periods over the field $
\Q(\chi)(t,e^{-t},w,e^{-wt})\;.$

\medskip

We also consider the special constants $\hat{S}_{n,0}(\chi)$
associated with the series
\[
\sum_{k=1}^{ms}\chi(k)\,k^n\log k.
\]
By definition,
\[
\hat{S}_{n,0}(\chi)=\hat{S}^{k^n,k}(\chi).
\]
Let $\chi:\Z\to\C$ be a periodic function of period $m$ satisfying $\chi(0)=0$. We obtain the following integral representation.

\begin{theorem}\label{thm:twisted_formula}
The constant $\hat{S}_{n,0}(\chi)$ admits the representation
\[
\begin{aligned}
\hat{S}_{n,0}(\chi)
&=
(-1)^{n+1}n!
\int_0^{+\infty}
\frac{1}{t^n}
\left(
\frac{F_{\chi}(-t,-1)}{t}
-
\sum_{k=-1}^{n} b_{\chi,k}t^k
\right)
\frac{e^{-t}\,dt}{t}
\\
&\quad
-
(-1)^{n+1}n!
\sum_{j=0}^{n+1}
\frac{(-1)^{j+1}}{j!}
H_j\, b_{\chi,n-j}
\\
&\quad
+
(-1)^{n+1}n!\,
b_{\chi,n}(1)\log m.
\end{aligned}
\]
where
\[
F_{\chi}(t,x)
=
\sum_{a=1}^{m}
\frac{\chi(a)\,t\,e^{at}}{e^{mt}-1}\,e^{xt},
\]
and the coefficients $b_{\chi,k}$, for $k=-1,0,1,\ldots$, are defined by the generating series
\[
\frac{F_{\chi}(-t,-1)}{t}
=
\sum_{k=-1}^{\infty} b_{\chi,k}\,t^k.
\]
\end{theorem}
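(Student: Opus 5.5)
The plan is to reduce $\hat{S}_{n,0}(\chi)$ to the untwisted constants $\mathbf{S}_j(w)$ of Theorem~\ref{thm:integral-rep-Sv}, taken at the rational points $w=a/m$, and then to reassemble the resulting integrals into one integral with kernel $F_\chi(-t,-1)/t$.

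First split the sum by residue class. Write $k=a+\ell m$ with $a\in\{1,\dots,m\}$ (using $\chi(0)=\chi(m)=0$) and $0\le \ell\le s-1$. Then
\[
k^n\log k = m^n\left(\ell+\tfrac{a}{m}\right)^n\left(\log m+\log\left(\ell+\tfrac{a}{m}\right)\right).
\]
Hence
\[
\sum_{k=1}^{ms}\chi(k)k^n\log k=\sum_{a=1}^{m}\chi(a)\,m^n\left(\log m\sum_{\ell=0}^{s-1}\left(\ell+\tfrac am\right)^n+\sum_{\ell=0}^{s-1}\left(\ell+\tfrac am\right)^n\log\left(\ell+\tfrac am\right)\right).
\]
Taking constant terms in the asymptotic expansion in $s$ gives two contributions.

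The log-free sum has constant term equal to the constant term of the Faulhaber/Hurwitz polynomial. With the paper's normalization this is $-\zeta_H(-n,a/m)$, which should equal $(-1)^{n+1}n!\,b_n(a/m)$. The second sum has constant term $\mathbf{S}_n(a/m)$ by definition. So
\[
\hat S_{n,0}(\chi)=m^n\sum_a\chi(a)\left((-1)^{n+1}n!\,b_n(a/m)\log m+\mathbf S_n(a/m)\right).
\]
I must check the sign and normalization of the Faulhaber constant term against the $b_n(w)$ generating series. That check should identify $m^n\sum_a\chi(a)b_n(a/m)$ with the coefficient $b_{\chi,n}(1)$ of the statement. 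I read $b_{\chi,n}(1)$ as the coefficient of $t^n$ in $F_\chi(-t,-1)/t$, i.e. $b_{\chi,n}$; this is to be confirmed from the later definition in the paper.

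Next insert Theorem~\ref{thm:integral-rep-Sv} for each $\mathbf{S}_n(a/m)$ and change variables $t\mapsto mt$ in each integral. The key identity is
\[
\sum_{a=1}^m\chi(a)\frac{e^{-at}}{1-e^{-mt}}=\frac{F_\chi(-t,-1)}{t}\,e^{t}.
\]
Taking $x=-1$ supplies the factor $e^{-t}$ that matches the measure $e^{-t}dt/t$. Under the rescaling, $t^{-n}$ produces a factor $m^{-n}$ that cancels the prefactor $m^n$. The subtracted polynomial parts, $m^n\sum_a\chi(a)\,m^{-k}b_k(a/m)$ up to the correct Bernoulli/Laurent reindexing, combine into $\sum_{k\le n}b_{\chi,k}t^k$.

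The delicate point is that each individual integral in Theorem~\ref{thm:integral-rep-Sv} carries the weight $e^{-wt}$, not $e^{-t}$. Rather than matching term by term, I will first rewrite each $\mathbf S_n(w)$ in the form $\int(\frac{e^{-wt}}{1-e^{-t}}-\frac1t-\sum_{k=0}^n b_k(w)t^k)\frac{dt}{t^{n+1}}$ plus a correction, using the modified Bernoulli polynomials $b_k(w)$. The difference between the two subtraction schemes is a finite combination of Frullani-type integrals
\[
\int_0^\infty t^{j}\left(e^{-wt}-e^{-t}\right)\frac{dt}{t},
\]
evaluated via $\mathbf{\hat r}_n(w)$ and the harmonic-number formula. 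Equivalently, I can shift the subtraction from $e^{-t}\cdot(\cdot)$ to the whole kernel and use the higher Frullani integral $\int_0^\infty\big(e^{-t}-\sum_{j\le N}(-t)^j/j!\big)t^{-N-1}dt$. This produces the $H_j$ terms in the second line of the statement.

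After summing over $a$ with weights $\chi(a)$, all $\log(a/m)$ terms must cancel except the $\log m$ term. This cancellation is plausible because the $\log w$ coefficient in $\mathbf{\hat r}_n$ is exactly $-b_n(w)$.

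I expect the main obstacle to be this bookkeeping. One must show that the $\log(a/m)$ and the $w$-dependent harmonic terms from $\mathbf{\hat r}_n(a/m)$ combine with the Frullani corrections into exactly $-(-1)^{n+1}n!\sum_j\frac{(-1)^{j+1}}{j!}H_jb_{\chi,n-j}$ plus the single $b_{\chi,n}(1)\log m$ term. I would verify this first on the case $n=0$ before doing the general case.
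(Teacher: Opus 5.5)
Your overall route is the paper's: split by residues, insert Theorem~\ref{thm:integral-rep-Sv} at $w=a/m$, rescale $t\mapsto mt$, replace the $e^{-at}$-weighted subtractions by $e^{-t}$ times Taylor polynomials of $e^{-(a-1)t}$ via the Higher Frullani formula with $s=a-1$, and collect harmonic terms. The paper does this for general $w$ in Theorem~\ref{thm:105_2} and then sets $w=0$.

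\textbf{The gap is in your first step, and it breaks the bookkeeping.} The piece $m^n\log m\sum_a\chi(a)\sum_{\ell=0}^{s-1}(\ell+a/m)^n$ contributes \emph{nothing} to the constant term. The expansion in powers of $s$ is the one that defines both $\mathbf S_n$ (Lemma~\ref{lemma:132_5}, with $\mathbf B_n\in s(\cdots)$) and $\hat S_{n,0}(\chi)$. In it, $\sum_{\ell=0}^{s-1}(\ell+v)^n=\frac{B_{n+1}(s+v)-B_{n+1}(v)}{n+1}$ is a polynomial in $s$ vanishing at $s=0$, so it is absorbed entirely into $\hat B$. Hence $\hat S_{n,0}(\chi)=m^n\sum_a\chi(a)\,\mathbf S_n(a/m)$, with no extra $\log m$ term. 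Concrete check: take $n=0$ and $\chi$ the nontrivial character mod $4$. The log-free piece is identically $s\log 4\sum_a\chi(a)=0$, yet your formula assigns it the constant $-\log 4\sum_a\chi(a)b_0(a/4)=-\log 2$. The true constant is $\log\bigl(\Gamma(3/4)/\Gamma(1/4)\bigr)$, which the stated formula reproduces and yours misses by $-\log 2$.

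\textbf{Where the $\log m$ term actually comes from.} It comes from the $-b_n(v)\log v$ part of $\mathbf{\hat r}_n(v)$ at $v=a/m$. Write $\log(a/m)=\log a-\log m$. The $\log a$ pieces cancel exactly against the $a^{\ell}\log a$ terms produced by the Frullani evaluations. What survives is $(-1)^{n+1}n!\,m^n\sum_a\chi(a)b_n(a/m)\log m=(-1)^{n+1}n!\,b_{\chi,n}(1)\log m$. So with your extra Faulhaber term you would end up with twice the stated $\log m$ coefficient.

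\textbf{Smaller corrections.}
\begin{enumerate}
\item $b_{\chi,n}(1)\neq b_{\chi,n}$ in general. By definition $b_{\chi,n}(1)$ is the $t^n$-coefficient of $F_\chi(-t,0)/t=\sum_a\chi(a)e^{-at}/(1-e^{-mt})$, which equals $m^n\sum_a\chi(a)b_n(a/m)$. Your first identification was right; your ``reading'' as $b_{\chi,n}$ was not.
\item The key identity should read $\sum_a\chi(a)\frac{e^{-at}}{1-e^{-mt}}=\frac{F_\chi(-t,-1)}{t}\,e^{-t}$, with $e^{-t}$ rather than $e^{t}$.
\item The intermediate integral $\int_0^{+\infty}\bigl(\frac{e^{-wt}}{1-e^{-t}}-\frac1t-\sum_{k=0}^n b_k(w)t^k\bigr)\frac{dt}{t^{n+1}}$ diverges at $+\infty$, since the $k=n$ term gives $-b_n(w)/t$. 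Only your ``equivalently'' variant works.
\item In that variant, the harmonic terms close using $\sum_{j}\binom{\ell}{j}H_\ell(a-1)^j=H_\ell a^\ell$ together with the generating-function identity \eqref{eq:145_2}. Concretely, you read off the $t^n$-coefficient of $-\frac{F_\chi(-t,-1)}{t}\sum_{\ell}\frac{H_\ell}{\ell!}(-t)^\ell$.
\end{enumerate}
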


For the twisted Bernoulli numbers $b_{\chi,k}$, twisted Bernoulli polynomials and properties of $F_\chi$, we refer to the Appendix \ref{appendix:Bernoulli} and to \cite{Co}. When $\chi$ is a character the definition is more classical (see \cite{Iwa}).

When $\chi$ is the constant function, the integral formula reduces to the integral representation of the classical Stirling--Ramanujan constants.

\begin{corollary}
	The constant $\hat{S}_{n,0}(\chi)$ is an exponential period over the base field $\Q(\chi)(t,e^{-t})$.
\end{corollary}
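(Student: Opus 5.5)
The plan is to read the corollary directly off the integral formula of Theorem~\ref{thm:twisted_formula}. That formula has three pieces, and I will show each is an exponential period over $\mathbb{E}(\chi)=\Q(\chi)(t,e^{-t})$. Since exponential periods over a fixed base field form a ring (and a $\Q(\chi)$-vector space), their sum is then one as well.

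\emph{The integrand.} Write
\[
\frac{F_\chi(-t,-1)}{t}=\sum_{a=1}^{m}\chi(a)\,\frac{e^{-at}e^{t}}{1-e^{-mt}}\,\frac{1}{e^{-t}\cdot e^{t}}\cdot(\ldots),
\]
or more simply expand the definition of $F_\chi$ to get
\[
\frac{F_\chi(-t,-1)}{t}=\sum_{a=1}^{m}\frac{\chi(a)\,e^{-at}e^{t}}{e^{-mt}-1}\cdot(-1).
\]
This is a rational expression in $e^{-t}$ with coefficients in $\Q(\chi)$, so it lies in $\Q(\chi)(e^{-t})$.

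\emph{The coefficients.} Next I check that the $b_{\chi,k}$ lie in $\Q(\chi)$. They are the Laurent coefficients at $t=0$ of
\[
\sum_a \chi(a)\,\frac{e^{(1-a)t}}{1-e^{-mt}}.
\]
Expanding $1/(1-e^{-mt})=\frac{1}{mt}+\frac12+\ldots$ with rational coefficients and multiplying by the Taylor series of $e^{(1-a)t}$, which also has rational coefficients, shows that each $b_{\chi,k}$ is a $\Q$-linear combination of the values $\chi(a)$. (The notation is slightly ambiguous; I am reading $F_\chi(-t,-1)$ with $x=-1$ so that it produces the factor $e^{t}$.)

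\emph{The integral.} The integrand is
\[
t^{-n-1}\Bigl(\frac{F_\chi(-t,-1)}{t}-\sum_{k=-1}^{n} b_{\chi,k}t^k\Bigr)e^{-t}\in\Q(\chi)(t,e^{-t}).
\]
The integral is over $(0,+\infty)$, whose endpoints are algebraic (or $\infty$), so it has the shape required of an exponential period. The one real analytic point is convergence.
\begin{itemize}
\item At $0$: by the definition of the $b_{\chi,k}$, the bracket is $O(t^{n+1})$. The integrand is therefore $O(1)$ and integrable.
\item At $\infty$: the bracket grows at most polynomially, since $F_\chi(-t,-1)/t$ is bounded for large $t$. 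The factor $e^{-t}$ then gives absolute convergence.
\end{itemize}
This is exactly the argument used for Theorem~\ref{thm:integral-rep-Sv} in the untwisted case.

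\emph{The constant terms.} The harmonic-number term $\sum_j \frac{(-1)^{j+1}}{j!}H_j\,b_{\chi,n-j}$ lies in $\Q(\chi)$. Elements of the base field are trivially exponential periods, for instance $c=\int_0^{1}c\,dt$, or as an integral over $(0,\infty)$ against $e^{-t}$.

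The remaining term is $b_{\chi,n}(1)\log m$. The twisted Bernoulli polynomial evaluated at $1$ is again a $\Q$-linear combination of the $\chi(a)$, by the same expansion argument as for the $b_{\chi,k}$. For $\log m$ I use the Frullani integral quoted before Theorem~\ref{thm:log-determinants_are_periods}:
\[
\log m=\int_0^{+\infty}\bigl(1-e^{-(m-1)t}\bigr)\frac{e^{-t}\,dt}{t}.
\]
Its integrand lies in $\Q(t,e^{-t})$. Multiplying by the constant $b_{\chi,n}(1)\in\Q(\chi)$ keeps it an exponential period over $\mathbb{E}(\chi)$.

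Adding the three pieces, with the rational factor $(-1)^{n+1}n!$, completes the proof. The only step needing care is confirming that the $b_{\chi,k}$ and $b_{\chi,n}(1)$ lie in $\Q(\chi)$; the rest is bookkeeping.
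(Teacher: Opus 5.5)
Your proposal is correct. It is essentially the paper's own argument, which the paper leaves implicit by stating the corollary directly after Theorem~\ref{thm:twisted_formula}. Read the formula off term by term: the integrand $t^{-n-1}\bigl(\sum_a \chi(a)e^{-(a-1)t}/(1-e^{-mt})-\sum_{k=-1}^{n}b_{\chi,k}t^k\bigr)e^{-t}$ lies in $\Q(\chi)(t,e^{-t})$, the numbers $b_{\chi,k}$ and $b_{\chi,n}(1)$ are $\Q$-linear combinations of the values $\chi(a)$, and $\log m$ is a Frullani integral over $\Q(t,e^{-t})$, exactly as in the untwisted Corollary~\ref{cor:base_field}. (Drop your garbled first display for $F_\chi(-t,-1)/t$; your second expression is the correct one.)
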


In general, for $\ell=0,1,\ldots,m-1$, we define $\hat{S}_{n,\ell}(\chi)$ by the asymptotic expansion
\[
 \sum_{k=1}^{ms+\ell}\chi(k)\,k^n\log k
 =
 \hat{A}_{\chi,n,\ell}(s)\log s
 +
 \hat{B}_{\chi,n,\ell}(s)
 +
 \hat{S}_{n,\ell}(\chi)
 +
 \hat{R}_{\chi,n,\ell}\!\left(\frac1s\right).
\]

Then
\[
\hat{S}_{n,\ell}(\chi)
=
\hat{S}_{n,0}(\chi)
+
\hat{s}_{n,\ell}(\chi),
\qquad
\ell=0,1,\dots,m-1,
\]
where
\[
\hat{s}_{n,\ell}(\chi)
=
(H_n+\log m)\sum_{k=1}^{\ell}\chi(k)k^n.
\]

\begin{corollary}
The twisted Stirling--Ramanujan constants
\[
\hat{S}_{n,0}(\chi),\ldots,\hat{S}_{n,m-1}(\chi)
\]
are exponential periods over the base field $\Q(\chi)(t,e^{-t})$.
\end{corollary}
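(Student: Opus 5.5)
The corollary reduces to the previous one. Fix $\ell\in\{0,\dots,m-1\}$. By the decomposition stated just before,
\[
\hat{S}_{n,\ell}(\chi)=\hat{S}_{n,0}(\chi)+\hat{s}_{n,\ell}(\chi),
\qquad
\hat{s}_{n,\ell}(\chi)=(H_n+\log m)\sum_{k=1}^{\ell}\chi(k)k^n .
\]
The previous corollary, deduced from Theorem~\ref{thm:twisted_formula}, already shows that $\hat{S}_{n,0}(\chi)$ is an exponential period over $\Q(\chi)(t,e^{-t})$. Exponential periods over a fixed base field form a $\Q$-algebra, and in fact a module over the constants of that field. So it suffices to show that each correction term $\hat{s}_{n,\ell}(\chi)$ is an exponential period over the same field, and then add.

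For the correction term, $c_\ell:=\sum_{k=1}^{\ell}\chi(k)k^n$ lies in $\Q(\chi)$, and so does $H_n\in\Q$. Hence $\hat{s}_{n,\ell}(\chi)=c_\ell H_n + c_\ell\log m$. The constant $c_\ell H_n$ is trivially a period: integrate $c_\ell H_n\, e^{-t}$ over $[0,+\infty)$. For $\log m$ I will use the Frullani integral already quoted in the paper,
\[
\log m=\int_0^{+\infty}\bigl(1-e^{-(m-1)t}\bigr)\frac{e^{-t}\,dt}{t}.
\]
The integrand lies in $\Q(t,e^{-t})$ because $m-1$ is a nonnegative integer. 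Multiplying by $c_\ell\in\Q(\chi)$ gives an integrand in $\Q(\chi)(t,e^{-t})$. Combining these with the integral formula of Theorem~\ref{thm:twisted_formula} under one integral sign, or simply invoking closure under addition, gives the claim.

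The only step needing care is the identity $\hat{S}_{n,\ell}(\chi)=\hat{S}_{n,0}(\chi)+\hat{s}_{n,\ell}(\chi)$ itself, if it has to be justified rather than taken as stated. For that I would write
\[
\sum_{k=1}^{ms+\ell}\chi(k)k^n\log k
=\sum_{k=1}^{ms}\chi(k)k^n\log k+\sum_{j=1}^{\ell}\chi(j)(ms+j)^n\log(ms+j),
\]
using $m$-periodicity of $\chi$. I would then expand $(ms+j)^n\log(ms+j)$ asymptotically in $s$. Writing $\log(ms+j)=\log s+\log m+\log(1+j/(ms))$, the constant term is $j^n\log m$ plus the constant coming from $(ms+j)^n\log(1+j/(ms))$. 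By the computation already done for $\mathbf{S}_n(w+1)-\mathbf{S}_n(w)$, the latter constant equals $j^nH_n$. Summing over $j$ with weights $\chi(j)$ gives exactly $\hat{s}_{n,\ell}(\chi)$.
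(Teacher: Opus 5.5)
Your proof is correct and follows the paper's route. The corollary follows from three ingredients: the stated shift $\hat S_{n,\ell}(\chi)=\hat S_{n,0}(\chi)+(H_n+\log m)\sum_{k=1}^{\ell}\chi(k)k^n$, the preceding corollary for $\hat S_{n,0}(\chi)$, and the Frullani representation of $\log m$ over $\Q(t,e^{-t})$. Your asymptotic check of the shift term uses $\log s$ rather than $\log(ms)$ in the basis, as the paper does, and holds for every $n\geq 0$; it verifies a formula that the paper only states.
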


Theorem \ref{thm:twisted_formula} is obtained from the parameter 
Theorem \ref{thm:105_2} as in the non-twisted case. 

\subsection{Application to special values of Dirichlet $L$-functions}

When $\chi$ is a Dirichlet character, we consider the associated Dirichlet $L$-function $L_\chi$.
The twisted
Stirling--Ramanujan constants $\hat{S}_{n,0}(\chi)$ are directly related to derivatives
of Dirichlet $L_\chi$ at non-positive integers. 

\begin{theorem}\label{cor:205_1}
	For every $n\geq 0$, one has
	\[
	\begin{aligned}
		\hat{S}_{n,0}(\chi)
		&=
		-L_\chi'(-n)
		+
		(-1)^{n+1}n! \, m^n
		\left(
		\sum_{a=1}^{m} \chi(a)\, b_n\!\left(\frac{a}{m}\right)
		\right) H_n
		\\
		&\quad
		+
		\frac{B_{\chi,n+1}}{n+1}\,  \log m.
	\end{aligned}
	\]
\end{theorem}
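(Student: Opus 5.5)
Split the sum by residue classes modulo $m$ and reduce to Theorem~\ref{thm:zetasn} for the Hurwitz zeta function at $w=a/m$. Since $\chi(0)=0$, for $1\le a\le m-1$ write $k=a+\ell m = m(\ell + a/m)$ with $\ell=0,\dots,s-1$. This gives
\[
\sum_{k=1}^{ms}\chi(k)k^n\log k=\sum_{a=1}^{m}\chi(a)\, m^n\sum_{\ell=0}^{s-1}\Bigl(\ell+\tfrac{a}{m}\Bigr)^n\Bigl(\log m+\log\bigl(\ell+\tfrac{a}{m}\bigr)\Bigr).
\]
The class $a=m$ (equivalently $a=0$) contributes nothing because $\chi(m)=\chi(0)=0$, so all shifts satisfy $a/m\in(0,1)$ and $\Re w>0$ holds.

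\emph{First piece.} Taking the constant term of the $\log(\ell+a/m)$ part gives, by definition of $\mathbf S_n(w)$,
\[
m^n\sum_a\chi(a)\,\mathbf S_n(a/m).
\]
By Theorem~\ref{thm:zetasn} this equals
\[
-m^n\sum_a\chi(a)\zeta_H'(-n,a/m)+(-1)^{n+1}n!\,m^n\Bigl(\sum_a\chi(a)b_n(a/m)\Bigr)H_n.
\]
Differentiating $L_\chi(s)=m^{-s}\sum_a\chi(a)\zeta_H(s,a/m)$ at $s=-n$ gives
\[
L_\chi'(-n)=m^n\sum_a\chi(a)\zeta_H'(-n,a/m)-\log m\; m^n\sum_a\chi(a)\zeta_H(-n,a/m).
\]

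\emph{Second piece.} The $\log m$ part is $\log m\cdot m^n\sum_a\chi(a)\sum_{\ell=0}^{s-1}(\ell+a/m)^n$. This is a polynomial in $s$ by Faulhaber, and its constant term is $-\sum_a\chi(a)m^n\zeta_H(-n,a/m)$. Indeed, the Ramanujan constant of $\sum_{\ell=0}^{s-1}(\ell+w)^n$ is $-B_{n+1}(w)/(n+1)+B_{n+1}(w)/(n+1)$ shifted appropriately, i.e.\ it equals $\zeta_H(-n,w)=-B_{n+1}(w)/(n+1)$ up to the sign convention. I will check this convention carefully using $\sum_{\ell=0}^{s-1}(\ell+w)^n=(B_{n+1}(s+w)-B_{n+1}(w))/(n+1)$: its constant term in $s$ is $0$. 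This means the constant term must be read off after re-expanding in the basis of the paper. Since $B_{n+1}(s+w)$ expanded in powers of $s$ has constant term $B_{n+1}(w)$, the total constant term is $0$.

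This conflicts with the claimed $\frac{B_{\chi,n+1}}{n+1}\log m$ term unless that term comes from the $L'$ conversion. So the plan is as follows. The Ramanujan $\log m$ piece contributes zero. The conversion contributes $-\log m\, m^n\sum\chi(a)\zeta_H(-n,a/m)$ with a sign. Using $\zeta_H(-n,a/m)=-B_{n+1}(a/m)/(n+1)$ and the standard identity $B_{\chi,n+1}=m^{n}\sum_a\chi(a)B_{n+1}(a/m)$, substituting gives exactly
\[
\hat S_{n,0}(\chi)=-L_\chi'(-n)+(\text{harmonic term})+\frac{B_{\chi,n+1}}{n+1}\log m.
\]

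The main obstacle is bookkeeping rather than analysis. The key points are the sign in the $\log m$ term and confirming that the Faulhaber piece genuinely contributes zero to the constant term, since the asymptotic basis is in $s$, not $ms$. If $\hat S_{n,0}$ is instead normalized with respect to $\log(ms)$, a shift $\log m$ times the $s^{0}$-coefficient could appear. I would settle this by comparing against Theorem~\ref{thm:twisted_formula}'s term $b_{\chi,n}(1)\log m$, which must match $\frac{B_{\chi,n+1}}{n+1}\log m$ after relating twisted Bernoulli numbers to $B_{\chi,n+1}$ via the generating function $F_\chi$.
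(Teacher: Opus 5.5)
Your proposal is correct and is essentially the paper's proof. Both reduce to $\hat S_{n,0}(\chi)=m^n\sum_{a}\chi(a)\,\mathbf S_n(a/m)$, apply Theorem~\ref{thm:zetasn}, and differentiate $L_\chi(s)=m^{-s}\sum_a\chi(a)\zeta_H(s,a/m)$ at $s=-n$, so that the $\log m$ term comes out as $-\log m\,L_\chi(-n)=\frac{B_{\chi,n+1}}{n+1}\log m$.

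Two clean-ups. First, delete your initial claim that the $\log m$--Faulhaber piece has constant term $-m^n\sum_a\chi(a)\zeta_H(-n,a/m)$; that claim is wrong, and its constant term is $0$, as you then verify yourself (this is also what the paper uses implicitly). Second, your $\log(ms)$ normalization worry is moot, since the paper's expansion is in $\log s$.
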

Note that the result remains valid for every periodic function $\chi$ with $m\geq 1$.
From this, as a direct corollary, we get information about the transalgebraic nature of these 
 special values of $L$-functions : 
\begin{corollary}
For $n\geq 0$, $L_\chi'(-n)$ is an exponential period over the base field $\Q(\chi)(t,e^{-t})$.
\end{corollary}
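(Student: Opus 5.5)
The corollary follows directly from Theorem \ref{cor:205_1} and Theorem \ref{thm:twisted_formula}. We solve the identity of Theorem \ref{cor:205_1} for the derivative:
\[
L_\chi'(-n) = -\hat{S}_{n,0}(\chi) + (-1)^{n+1}n!\, m^n \Big(\sum_{a=1}^m \chi(a) b_n(a/m)\Big) H_n + \frac{B_{\chi,n+1}}{n+1}\log m .
\]
It then suffices to check that each of the three terms on the right is an exponential period over $\Q(\chi)(t,e^{-t})$. The class of exponential periods over a fixed base field is a $\overline{\Q}\cap$(base constants)-vector space, since integrals over $(0,+\infty)$ can be added and scaled. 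So it is enough to treat each term separately, with coefficients in $\Q(\chi)$.

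\textbf{First term.} $\hat{S}_{n,0}(\chi)$ is an exponential period over $\Q(\chi)(t,e^{-t})$ by the corollary following Theorem \ref{thm:twisted_formula}. Concretely, the integrand $F_\chi(-t,-1)/t$ equals
\[
\frac1t\sum_{a=1}^m \frac{\chi(a)\,e^{-at}}{1-e^{-mt}}
\]
up to sign conventions. This lies in $\Q(\chi)(t,e^{-t})$, as do the subtracted Laurent polynomial and $e^{-t}/t$. The remaining terms of that formula are a $\Q(\chi)$-multiple of $\log m$, plus rational combinations of the $b_{\chi,k}$.

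\textbf{Second term.} The coefficient $\sum_a \chi(a) b_n(a/m)$ lies in $\Q(\chi)$, because the $b_n$ are polynomials with rational coefficients. Likewise $H_n\in\Q$ and $n!\,m^n\in\Q$, so this term is an element of $\Q(\chi)$. An element $c$ of the base constants is an exponential period, for instance as
\[
c=\int_0^{+\infty} c\,e^{-t}\,dt .
\]

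\textbf{Third term.} Here $B_{\chi,n+1}\in\Q(\chi)$ by its generating-function definition, since the Taylor coefficients of $F_\chi$ have the form $\sum_a \chi(a)\cdot(\text{rational})$. For $\log m$ we use Frullani's integral
\[
\log m=\int_0^{+\infty}\big(1-e^{-(m-1)t}\big)\frac{e^{-t}dt}{t},
\]
whose integrand lies in $\Q(t,e^{-t})$.

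\textbf{Main obstacle.} The only delicate point is bookkeeping: we must make sure that every coefficient in Theorem \ref{thm:twisted_formula} and Theorem \ref{cor:205_1}, namely $b_{\chi,k}$, $b_{\chi,n}(1)$ and $B_{\chi,n+1}$, genuinely lies in $\Q(\chi)$. We must also check that summing the separate integrals into a single convergent integral over $(0,+\infty)$ is legitimate. Each piece converges absolutely: the subtracted Laurent part cancels the singularity at $0$, and $e^{-t}$ gives decay at infinity. Hence the combined integrand also converges, and it stays in $\Q(\chi)(t,e^{-t})$.
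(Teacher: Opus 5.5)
Your proposal is correct and follows the paper's own route: solve Theorem \ref{cor:205_1} for $L_\chi'(-n)$, then combine the corollary that $\hat{S}_{n,0}(\chi)$ is an exponential period over $\Q(\chi)(t,e^{-t})$ with the $\Q(\chi)$-rationality of the remaining coefficients and Frullani's integral for $\log m$. One small slip, which does not affect membership in $\Q(\chi)(t,e^{-t})$: the correct integrand is $F_\chi(-t,-1)/t=\sum_{a=1}^m \chi(a)\,e^{-(a-1)t}/(1-e^{-mt})$, with no extra factor $1/t$ and with $e^{-(a-1)t}$ rather than $e^{-at}$.
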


\begin{theorem}\label{thm:dirichletperiod}
Let $\chi$ be a Dirichlet character modulo $m$, and let $\chi_1$ be the
primitive character of conductor $m_1$ inducing $\chi$. Write
\[
\chi(-1)=(-1)^a.
\]
Then, for $n\geq1$ with $n\equiv a\pmod{2}$,
\[
\frac{L_\chi(n+1)}{\pi^n}
\]
is an exponential period over the base field $
\mathbb Q(\chi,\zeta_{4m_1})(t,e^{-t}),$
where $\zeta_{4m_1}$ is a primitive $4m_1$-th root of unity\;.
\end{theorem}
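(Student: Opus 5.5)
We combine the functional equation of Dirichlet $L$-functions with the preceding Corollary, which says that $L'_{\chi}(-n)$ is an exponential period over $\Q(\chi)(t,e^{-t})$.

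\textbf{Reduction to the primitive character.} First we replace $\chi$ by $\chi_1$. We have
\[
L_\chi(s)=L_{\chi_1}(s)\prod_{p\mid m}\bigl(1-\chi_1(p)p^{-s}\bigr).
\]
At $s=n+1$ the Euler factor is a nonzero element of $\Q(\chi)$: the values $\chi_1(p)$ are roots of unity lying in $\Q(\chi_1)\subset\Q(\chi,\zeta_{m_1})$, and $p^{-n-1}\in\Q$. Exponential periods over a field form a vector space over that field, and in fact a ring. So it suffices to treat $L_{\chi_1}(n+1)/\pi^n$, since the Euler factor lies in the base field $\Q(\chi,\zeta_{4m_1})$.

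\textbf{Functional equation at a trivial zero.} For primitive $\chi_1$ of conductor $m_1$ with $\chi_1(-1)=(-1)^a$, we use
\[
L_{\chi_1}(1-s)=\frac{2\,\Gamma(s)\,m_1^{s-1}}{(2\pi)^s\,\tau(\overline{\chi_1})}\,i^{a}\cos\!\Big(\frac{\pi(s-a)}{2}\Big)\,\tau(\chi_1)\,L_{\overline{\chi_1}}(s)
\]
(up to the standard normalization, to be checked against a reference). We set $s=n+1$. Since $n\equiv a\pmod 2$, the factor $\cos(\pi(n+1-a)/2)$ vanishes, so $L_{\chi_1}(-n)=0$. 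Differentiating in $s$ at $s=n+1$ then gives
\[
L'_{\chi_1}(-n)=c\cdot \frac{n!\,m_1^{n}}{(2\pi)^{n+1}}\cdot\pi\cdot\frac{\tau(\chi_1)}{\tau(\overline{\chi_1})}\,L_{\overline{\chi_1}}(n+1),
\]
where $c$ is a power of $i$ times $\pm1$. The derivative of the cosine contributes the factor $\pi/2$, which cancels one power of $\pi$. Hence
\[
\frac{L_{\overline{\chi_1}}(n+1)}{\pi^{n}}=\lambda\, L'_{\chi_1}(-n),
\qquad \lambda\in \Q\bigl(\chi_1,i,\tau(\chi_1)/\tau(\overline{\chi_1})\bigr)^{\times}.
\]
Applying this to $\overline{\chi}$ in place of $\chi$ gives the same statement for $L_{\chi_1}(n+1)/\pi^n$.

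\textbf{Identifying the constant field.} The main point is to show that $\lambda$ lies in $\Q(\chi,\zeta_{4m_1})$. We have
\[
\tau(\chi_1)\tau(\overline{\chi_1})=\chi_1(-1)\,m_1 .
\]
Hence
\[
\frac{\tau(\chi_1)}{\tau(\overline{\chi_1})}=\frac{\tau(\chi_1)^2}{(-1)^a m_1}.
\]
The quantity $\tau(\chi_1)^2$ is not obviously in $\Q(\chi)$, so this step needs care. I would first check the cleanest form of the functional equation, in which the root number
\[
W(\chi_1)=\tau(\chi_1)/(i^a\sqrt{m_1})
\]
appears. In that form the ratio of Gauss sums combines with powers of $m_1$ and $i$ into quantities like $\tau(\chi_1)/\sqrt{m_1}$, multiplied by $m_1^{n}\in\Q$.

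If the constant still involves $\tau(\chi_1)=\sum_b\chi_1(b)\zeta_{m_1}^b\in\Q(\chi,\zeta_{m_1})$ together with $\sqrt{m_1}$, then $\sqrt{m_1}$ can be handled by Gauss's evaluation of the quadratic Gauss sum: $\sqrt{m_1}$ lies in $\Q(\zeta_{4m_1})$. This is presumably why $\zeta_{4m_1}$ appears in the statement. With this, $\lambda\in\Q(\chi,\zeta_{4m_1})$.

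\textbf{Conclusion.} We combine $\lambda\in\Q(\chi,\zeta_{4m_1})$ with the Corollary on $L'_\chi(-n)$, applied to the primitive character $\chi_1$, whose values lie in $\Q(\chi_1)\subset\Q(\chi,\zeta_{m_1})$. Then $L_{\chi}(n+1)/\pi^n$ is an exponential period over $\Q(\chi,\zeta_{4m_1})(t,e^{-t})$, as claimed.
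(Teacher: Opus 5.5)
Your argument follows the same route as the paper. You use the trivial zero $L_{\chi_1}(-n)=0$ for $n\equiv a \pmod 2$, differentiate the functional equation at $s=n+1$ to write $L(n+1)/\pi^n$ as an algebraic multiple of $L'(-n)$ for the conjugate character, invoke the Corollary on $L'_\chi(-n)$, and absorb the Euler factors. The only difference is bookkeeping. You pass to $\chi_1$ first and apply the Corollary to the primitive character; it must be applied to $\overline{\chi_1}$, since $L_{\chi_1}(n+1)$ is paired with $L'_{\overline{\chi_1}}(-n)$. The paper instead applies the Corollary to $\chi$ itself and carries the factor $\prod_{p}(1-\chi_1(p)p^{n})$ at $s=-n$. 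That factor is nonzero and is the only surviving term of the product rule, because $L_{\chi_1}(-n)=0$.

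What needs fixing is the displayed functional equation, and this is what makes your field-identification step look delicate. The correct asymmetric form is
\[
L_{\chi_1}(1-s)=\frac{2\,\Gamma(s)\,m_1^{s-1}}{(2\pi)^s}\, i^{-a}\cos\Big(\frac{\pi(s-a)}{2}\Big)\,\tau(\chi_1)\,L_{\overline{\chi_1}}(s).
\]
There is no $\tau(\overline{\chi_1})$ in the denominator, and the power of $i$ is $i^{-a}$, not $i^{a}$. For the odd character modulo $4$, your version gives $L_{\chi_1}(0)=i/4$ instead of $1/2$. Differentiating the correct form at $s=n+1$ yields exactly the paper's constant,
\[
\frac{L_{\overline{\chi_1}}(n+1)}{\pi^n}=(-1)^{(n-a)/2}\,\frac{2^{n+1}\, i^{a}}{m_1^{n}\, n!\,\tau(\chi_1)}\,L'_{\chi_1}(-n).
\]

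The field question is then immediate. The Gauss sum $\tau(\chi_1)=\sum_b \chi_1(b)\zeta_{m_1}^b$ is a nonzero element of $\Q(\chi,\zeta_{m_1})$ (note $\Q(\chi_1)=\Q(\chi)$), and both $i$ and $\zeta_{m_1}$ lie in $\Q(\zeta_{4m_1})$. No $\sqrt{m_1}$ appears, so the detour through quadratic Gauss sums is not needed. There is also no reason to want $\tau(\chi_1)^2\in\Q(\chi)$. The root of unity $\zeta_{4m_1}$ is there so that the base field contains both $i$ (from $i^{a}$) and $\zeta_{m_1}$ (from the Gauss sum).

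Your hedged paragraph still reaches the right conclusion. Every candidate ingredient ($i^{\pm1}$, $\tau(\chi_1)^{\pm1}$, $\tau(\overline{\chi_1})^{\pm1}$, and $\sqrt{m_1}$, which does lie in $\Q(\zeta_{4m_1})$) is a nonzero element of $\Q(\chi,\zeta_{4m_1})$. As written, however, that step is a case analysis over an unverified formula rather than a proof. Replace it by the explicit constant above.
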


\subsection{Structure of the proofs.}
Our fundamental result is Theorem \ref{thm:integral-rep-Sv}.
It can be proved in different ways. A first direct approach
consists in following the same ideas as in \cite{MPM4}.
This  gives an integral formula for the divergent sum by
using Higher Frullani integrals, and manipulating the integral
into three distinct parts, one for the divergent part of the asymptotic
expansion, another for the constant term and a last one for
the vanishing part of the asymptotics. A simplified example of this
type of manipulation is provided in the proof Proposition
\ref{prop:Formula_n=-1} below. The resulting computations in \cite{MPM4} are
laborious, and in the general case considered in this article, such an
approach would become even more cumbersome and not enlightening for the
reader. On the other hand, the advantage of this approach is that it is
very direct and fully self-contained.

Instead, we have decided to follow a different path, with the goal
to minimize the amount of computations and gain new insights into the
General Stirling-Ramanujan constants. The advantage of this approach 
is its simplicity which reduces the computational burden. The core strategy
consists  of exploiting a parameter dependence by introducing the natural
parameter $w$. The case of $w=1$ corresponds to the case treated in \cite{MPM4}.
Integration along the parameter allows to climb the ladder of the integer parameter $n$.
On the other hand, this approach is not self-contained because in the
integration procedure we need  to know the  integral formula for a particular
value $w=w_0$. In that particular point we use the result from \cite{MPM4}
for the value $w_0=1$. Also, we use Euler-Maclaurin formula to infer
the form of the asymptotics (this comes for free in the first approach).

Choosing this path forces us to structure the proofs in a particular way.
In particular, we are led to treat first the non-twisted case. Then, by a
standard procedure, we unwind a twisted series into a linear combination
of non-twisted ones. In this way we can reach the most general results,
without any painful computation.

We start with the proof of Theorem \ref{thm:integral-rep-Sv}.

\part{}

\section{Proof of Theorem \ref{thm:integral-rep-Sv}.}

\subsection{Preliminaries.}
Given an integer $n\geq 0$ and a complex parameter $w\in \C$, $\Re w >0$, we study
the divergent series
\[
 \sum_{k=0}^{s-1} (k+w)^n \log(k+w) \ .
\]
Our first task is to establish  an asymptotic expansion of the appropriate form, so that the Ramanujan constant
$\mathbf{S}_n(w)$ of this series is well defined as the constant term.

It is convenient to define constants $\mathbf{S}_n(w)$ also for negative $n\leq -1$, but this time the logarithmic term being absent from the series. More specifically, for $n\leq -1$, we define $\mathbf{S}_n(w)$ as the Ramanujan summation of the series
\[
 \sum_{k=0}^{s-1} (k+w)^n.
\]
Note that using the same notation $\mathbf{S}_n(w)$ for both the logarithmic and non-logarithmic series is an abuse of notation. This convention is justified by the recurrence relations in $n$. We note that for $n \leq -2$, the series converges, and the Ramanujan constant coincides with the infinite sum
$$
 \mathbf{S}_n(w) = \sum_{k=0}^{\infty} (k+w)^n = \zeta_H(-n, w) \ .
$$

\subsection{Asymptotics.}

\subsubsection{Asymptotics for $n\geq 0$.}Our  definition of the Stirling--Ramanujan constant relies on the asymptotic behavior of the partial sums.
A direct  application of Euler-Maclaurin formula gives the following asymptotics:
\begin{lemma}\label{lemma:132_5}
Let $n\geq 0$ and $w \in \mathbb{C}$ with $\Re w>0$. When $s \to \infty$, we have the asymptotic expansion,
\[
 \sum_{k=0}^{s-1} (k+w)^n \log(k+w) = \mathbf{A}_{n}(s,w) \log s + \mathbf{B}_n(s,w) + \mathbf{S}_n(w) + \mathbf{R}_n(w, 1/s),
\]
where $\mathbf{S}_n(w)\in \C$, $\mathbf{A}_{n}(s,w) \in \mathbb{Q}[s,w]$ and $\mathbf{B}_{n}(s,w) \in s(\mathbb{Q}[w,s] \oplus (\log w) \mathbb{Q}[w,s])$ are polynomials in $s$, and $\mathbf{R}_n(w, 1/s) \in \  \frac1s \mathbb{Q}[w][[1/s]]$.

When we consider $w$ as an independent variable,
the asymptotic expansion in this base of functions is unique.
\end{lemma}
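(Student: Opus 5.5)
I will apply the Euler--Maclaurin formula to $f(x)=(x+w)^n\log(x+w)$ on $[0,s]$, with the sum $\sum_{k=0}^{s-1}f(k)=\sum_{k=0}^{s}f(k)-f(s)$, truncating at an order $2p>n+1$:
\[
\sum_{k=0}^{s-1} f(k)=\int_0^s f(x)\,dx-\frac{f(s)-f(0)}{2}+\sum_{j=1}^{p}\frac{B_{2j}}{(2j)!}\bigl(f^{(2j-1)}(s)-f^{(2j-1)}(0)\bigr)+R_p(s),
\]
where $R_p(s)=-\int_0^s \frac{\tilde B_{2p}(x)}{(2p)!} f^{(2p)}(x)\,dx$. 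Since $f^{(2p)}(x)=O(|x+w|^{n-2p})$ with $n-2p<-1$, and $x+w$ stays in $\Re>0$ away from $0$, the remainder integral converges absolutely as $s\to\infty$. Hence $R_p(s)=R_p(\infty)+O(s^{n-2p+1})$. Increasing $p$ lets the error be pushed to arbitrary negative order, which is what makes the expansion exist as an asymptotic series.

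Next I will expand each boundary term at $s$. The primitive $\int_0^s f$ is explicit: $\int (x+w)^n\log(x+w)\,dx=\frac{(x+w)^{n+1}}{n+1}\log(x+w)-\frac{(x+w)^{n+1}}{(n+1)^2}$. The derivatives $f^{(j)}(s)$ are of the form $c\,(s+w)^{n-j}\log(s+w)+d\,(s+w)^{n-j}$ with rational $c,d$ when $j\le n$, and pure rational multiples of $(s+w)^{n-j}$ when $j>n$. In every term I write $\log(s+w)=\log s+\log(1+w/s)$ and expand $\log(1+w/s)$ and $(s+w)^m$ in powers of $1/s$ with coefficients in $\mathbb{Q}[w]$. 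This produces three kinds of contribution:
\begin{itemize}
\item a polynomial in $s$ (coefficients in $\mathbb{Q}[w]$) times $\log s$, which gives $\mathbf{A}_n$;
\item positive powers of $s$ with coefficients in $\mathbb{Q}[w]$, which give $\mathbf{B}_n$;
\item a constant in $\mathbb{Q}[w]$ plus a series in $\frac1s\mathbb{Q}[w][[1/s]]$.
\end{itemize}
The terms evaluated at $0$, namely $f^{(j)}(0)$ and $\int$ at $0$, contribute only constants, possibly involving $\log w$. These are absorbed into $\mathbf{S}_n(w)$ together with $R_p(\infty)$ and the constants from the $s$-expansions.

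The claimed shape of $\mathbf{B}_n\in s(\mathbb{Q}[w,s]\oplus \log w\,\mathbb{Q}[w,s])$ should be checked term by term. For $j\le n$ the $\log$-free parts are polynomial, and $\log w$ appears only in constants. I expect $\mathbf{B}_n$ in fact lies in $s\mathbb{Q}[w,s]$, which is still within the stated class. The coefficients of $\mathbf{R}_n$ at each fixed order $s^{-N}$ stabilise once $2p>n+N+1$, because the Euler--Maclaurin terms of higher index only contribute $O(s^{n-2j+1})$. This consistency across $p$ is the main technical point: one must show that the formal series defined by increasing $p$ is well defined. I will argue it by comparing the expansions for $p$ and $p+1$, whose difference is exactly the $(p+1)$-th Bernoulli term, which is $O(s^{n-2p-1})$.

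Uniqueness follows from the linear independence, as $s\to\infty$, of the asymptotic scale $\{s^m\log s\}_{m\ge0}\cup\{s^m\}_{m\in\mathbb{Z}}$. If a finite combination were $o(s^{-N})$ for every $N$, I would compare leading growth rates to kill the coefficients successively: first the $\log$-terms, then the powers. Treating $w$ as an independent variable, the same comparison applies coefficientwise in $\mathbb{Q}[w]$ and $\log w$. In particular the constant term $\mathbf{S}_n(w)$ is well defined.
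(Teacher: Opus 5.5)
Your proposal is correct and follows essentially the same route as the paper. Both apply Euler--Maclaurin to $(x+w)^n\log(x+w)$, expand the boundary terms at $s$ in powers of $1/s$ through $\log(s+w)=\log s+\log(1+w/s)$, and get uniqueness from the growth ordering of the scale $\{s^m\log s\}_{m\geq 0}\cup\{s^m\}_{m\in\mathbb{Z}}$. Your version is more careful than the paper's in three places: the convergence of the remainder, the stabilisation of the $1/s$ coefficients as $p$ grows, and the correct observation that $\mathbf{B}_n$ actually lies in $s\mathbb{Q}[w,s]$.
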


\begin{proof}We treat $w$ as an independent variable in the complex half plane $\Re w>0$.
The asymptotic follows from Euler-Maclaurin summation formula applied to the function $f_n(z, w) = (z+w)^n \log(z+w)$ with respect to the variable $z$. We recall Euler-Maclaurin summation formula with remainder to order $m\geq 1$ (see for example \cite{Can}, p.178),
\begin{align*}
\sum_{k=\alpha}^\beta f(k)=& \int_\alpha^\beta f(t)\, dt +\frac12 (f(\alpha)+f(\beta))
+\sum_{k=1}^m \frac{B_{2k}}{(2k)!} ( f^{(2k-1)}(\beta) - f^{(2k-1)}(\alpha)) +R_m
\end{align*}
with
\[
R_m= \frac{1}{(2m+1)!} \int_\alpha^\beta B_{2m+1}(t-[t]) f^{(2m+1)} (t) \, dt
\]
where $(B_k)$ are the Bernoulli numbers  that are rational and $B_{2m+1}$ the Bernoulli polynomials  with rational coefficients. We observe that the primitive of $(z+w)^n \log(z+w)$ is in $\Q[z, w] \log (z+w)$. Furthermore, for any $m \geq 1$, the derivatives satisfy,
\[
\frac{\partial^{m}}{\partial z^{m}} \left( (z+w)^n \log(z+w) \right) \in \mathbb{Q}[z,w] \log(z+w) \oplus \mathbb{Q}[z,w](z+w)^{n-m}.
\]
For $m > n$, the logarithmic term vanishes in the higher-order derivatives, and the expression becomes a rational function in $(z+w)$. Specifically, for sufficiently large $m$, the derivatives are of the form $\mathbb{Q}[z,w] (z+w)^{-(m-n)}$, which admits an expansion in the power basis $\frac{1}{z}$ as:
\[
(z+w)^{-\ell} \in \frac{1}{z^\ell} \mathbb{Q}[w][[1/z]]\;.
\]
Applying the formula with $\alpha = 0$ and $\beta = s-1$ and collecting terms according to their growth in $s$ yields the desired form.

We have uniqueness of the asymptotic expansion because we have a total order relation of the functions in the bases by growth domination at infinity. In particular this implies that they are $\C$-linearly independent and so $\Q$-linearly independent.

More specifically, we prove the uniqueness by considering two asymptotics for the same function. Subtracting and looking at the leading term for $\Re s\to +\infty$ it must be zero, hence both asymptotics are equal.

\end{proof}

\begin{remark}
When $w=1$, these asymptotics appear explicitly in \cite{MPM4}. They were directly derived, without using Euler-Maclaurin formula, by using Higher Frullani integrals. \end{remark}

\begin{remark} The form of the asymptotics in the bases of functions splits into four complementary subspaces
$$
\bigoplus_{n\geq 0} \Q[w].(s^n\log s) \bigoplus \left (s(\mathbb{Q}[w,s] \oplus (\log w) \mathbb{Q}[w,s]) \right )\bigoplus C_w \bigoplus_{n\geq 1} \C[w]. (s^{-n}),
$$
where $C_w$ is the space of holomorphic functions depending only on $w$, for $\Re w >0$,  and independent of $s$.
An important observation is that each subspace is stable by the differential operator $w\frac{d}{dw}$.
\end{remark}

\subsubsection{Asymptotics  for $n\leq -1$.}

We complement Lemma \ref{lemma:132_5} with the cases for $n = -1$ and $n \leq -2$.

\begin{lemma}
For $n=-1$ we have the asymptotics, for $ s \to +\infty$,
$$
\sum_{k=0}^{s-1} (k+w)^{-1}  =\log s + \mathbf{S}_{-1}(w) +\mathbf{R}_{-1}(w,1/s),
$$
and for $n\leq -2$, the sum converges and we have the asymptotics for $\Re s \to +\infty$,
$$
\sum_{k=0}^{s-1} (k+w)^{n}  = \mathbf{S}_n(w) +\mathbf{R}_{n}(w,1/s),
$$
where in both cases $\mathbf{R}_n(w, 1/s) \in \  \frac1s \mathbb{Q}[w][[1/s]]$.
\end{lemma}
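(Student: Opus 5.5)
The plan is to copy the proof of Lemma \ref{lemma:132_5}, applying the Euler--Maclaurin formula in $z$ to $f_n(z,w)=(z+w)^n$ with $\alpha=0$ and $\beta=s-1$. The difference from the case $n\geq 0$ is that the primitive changes form: it is $\log(z+w)$ for $n=-1$, and $(z+w)^{n+1}/(n+1)$ for $n\leq -2$. All derivatives $f^{(j)}(z,w)$ lie in $\Q\,(z+w)^{n-j}$, so evaluated at $z=s-1$ they expand in $\frac{1}{s^{j-n}}\Q[w][[1/s]]$. This uses $(s-1+w)^{-\ell}=s^{-\ell}(1+(w-1)/s)^{-\ell}$ and the binomial series. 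The remainder integral $R_m=\frac{1}{(2m+1)!}\int_0^{s-1}B_{2m+1}(t-[t])f^{(2m+1)}(t)\,dt$ converges absolutely as $s\to\infty$, since $f^{(2m+1)}=O(t^{n-2m-1})$. We therefore split it as $\int_0^\infty-\int_{s-1}^\infty$: the first piece contributes to the constant term, and the tail is $O(s^{n-2m})$.

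For $n=-1$ the integral term is $\log(s-1+w)-\log w$. I write $\log(s-1+w)=\log s+\log(1+(w-1)/s)$, and the second logarithm expands in $\frac1s\Q[w][[1/s]]$. The term $-\log w$ is absorbed into the constant $\mathbf{S}_{-1}(w)$, which is allowed to be any holomorphic function of $w$. The boundary terms $\frac12 f(s-1)$ and $f^{(2k-1)}(s-1)$ go into $\mathbf R_{-1}$. The boundary terms at $0$ and the full remainder integral $\int_0^\infty$ go into the constant. This produces exactly the form $\log s+\mathbf{S}_{-1}(w)+\mathbf{R}_{-1}(w,1/s)$, with no polynomial part $\mathbf B$.

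For $n\leq-2$ the series converges because $\Re w>0$ gives $|k+w|\geq k+\Re w$. An easier route than Euler--Maclaurin is to write the partial sum as $\zeta_H(-n,w)-\sum_{k\geq s}(k+w)^n$, which identifies $\mathbf S_n(w)=\zeta_H(-n,w)$, consistent with the convention stated before the lemma. Euler--Maclaurin applied to the tail then gives its asymptotic expansion in $\frac1s\Q[w][[1/s]]$, and in fact in $s^{n+1}\Q[w][[1/s]]$, because the tail integral $(s+w)^{n+1}/(-(n+1))$ expands that way.

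The only delicate point is the meaning of the asymptotic expansion in $\frac1s\Q[w][[1/s]]$. This is a formal series, so the claim to prove is that truncating at order $N$ gives an error of $O(s^{-N-1})$. I will get that bound by choosing the Euler--Maclaurin order $m$ large relative to $N$ and using the bound on the tail of $R_m$. Uniqueness follows exactly as in Lemma \ref{lemma:132_5}.
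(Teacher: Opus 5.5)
Your proposal is correct and is the same argument the paper uses. The paper's proof is the one-line remark that the lemma follows from the Euler--Maclaurin formula applied to $f_n(z,w)=(z+w)^n$, and you have written out the details: the primitive, the boundary terms at $s-1$ expanded via $(s-1+w)^{-\ell}=s^{-\ell}(1+(w-1)/s)^{-\ell}$, and the split of the remainder into $\int_0^\infty$ plus a tail of order $s^{n-2m}$. For $n\leq -2$, applying it to the tail $\sum_{k\geq s}(k+w)^n$ is only a cosmetic variant, with the small bonus of identifying the constant directly as $\zeta_H(-n,w)$, which the paper records separately.
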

The proof follows from the Euler-Maclaurin formula applied to the function $f_n(z, w) = (z+w)^n$.

In conclusion, all the constants $\mathbf{S}_n(w)$ are well defined for all $n\in \Z$.

\subsection{Integral formula for $n\leq -1$.}
In this simple case, we can compute directly  the integral formula, which provides the initial step for the general case.

\begin{lemma}
For $n\leq -1$, the partial sums admit the following integral representation,
$$
\sum_{k=0}^{s-1} (k+w)^{n} =\frac{1}{ (-n-1)! } \int_0^{+\infty}  t^{-n} \frac{1-e^{-st}}{1-e^{-t}} \, \frac{e^{-wt} dt}{t}
$$
\end{lemma}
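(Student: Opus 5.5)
The plan is to start from the Gamma-function integral representation of a negative power. For $n\leq -1$ write $p=-n\geq 1$. For $\Re(k+w)>0$ we have
\[
(k+w)^{-p}=\frac{1}{(p-1)!}\int_0^{+\infty} t^{p}\, e^{-(k+w)t}\,\frac{dt}{t}.
\]
This is just $\Gamma(p)=\int_0^{+\infty} u^{p-1}e^{-u}\,du$ after the change of variable $u=(k+w)t$. The substitution is justified for complex $k+w$ with positive real part because both sides are holomorphic in $k+w$ on that half plane and they agree on the positive reals. Since $(p-1)!=(-n-1)!$, this gives the normalization constant appearing in the statement.

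Next I sum this identity over $k=0,\dots,s-1$. The sum is finite, so it commutes with the integral without any further argument, and we get
\[
\sum_{k=0}^{s-1}(k+w)^{n}=\frac{1}{(-n-1)!}\int_0^{+\infty} t^{-n}\Big(\sum_{k=0}^{s-1}e^{-kt}\Big)e^{-wt}\,\frac{dt}{t}.
\]
The inner sum is a finite geometric sum. For $t>0$ it equals $\frac{1-e^{-st}}{1-e^{-t}}$, which yields exactly the claimed integrand.

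It remains to check convergence of the resulting integral, and the only delicate point is the endpoint $t=0$. There $\frac{1-e^{-st}}{1-e^{-t}}\to s$, so near $0$ the integrand behaves like $s\,t^{-n-1}=s\,t^{p-1}$. This is integrable precisely because $p\geq 1$, and this is where the hypothesis $n\leq -1$ is used. At $+\infty$ the factor $e^{-wt}$ with $\Re w>0$ gives exponential decay, since the quotient stays bounded. Each of the finitely many summed integrals also converges separately, so exchanging sum and integral is harmless. I do not expect any real obstacle; the only care needed is the complex-$w$ justification of the Gamma integral in the first step.
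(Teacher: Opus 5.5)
Your proof is correct and is essentially the paper's argument: both write the negative power as a Laplace-type integral in $t$ and then sum the finite geometric series $\sum_{k=0}^{s-1}e^{-kt}=\frac{1-e^{-st}}{1-e^{-t}}$. The only difference is in how the higher powers are handled. The paper proves $n=-1$ from $(k+w)^{-1}=\int_0^{+\infty}e^{-(k+w)t}\,dt$ and reaches $n\leq -2$ by differentiating $-n-1$ times in $w$ under the integral sign. You instead use the Gamma integral $\int_0^{+\infty}t^{p-1}e^{-at}\,dt=(p-1)!/a^{p}$ for $\Re a>0$ directly, which avoids differentiating under the integral sign but requires your analytic-continuation remark (which is correct) for complex $a=k+w$.
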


\begin{proof}
For the case $n=-1$, using
$$
(k+w)^{-1} = \int_0^{+\infty} e^{-(k+w)t} \, dt,
$$
we compute
\begin{align*}
\sum_{k=0}^{s-1} \frac{1}{k+w} = \sum_{k=0}^{s-1} \int_0^{+\infty} e^{-(k+w)t} \, dt =\int_0^{+\infty} t\, \frac{1-e^{-st}}{1-e^{-t}} \, \frac{e^{-wt} dt}{t}\;.
\end{align*}
For the general case $n\leq -2$, we take $-n-1$ derivatives with respect to $w$, in the previous formula we get,
\[
(-1)^{-n-1} (-n-1)! \sum_{k=0}^{s-1} (k+w)^n =  (-1)^{-n-1} \int_0^{+\infty} t^{-n}\, \frac{1-e^{-st}}{1-e^{-t}} \, \frac{e^{-wt} dt}{t}\;.
\]

\end{proof}

We note that for $n\leq -2$ we have $S_{n}(w)=\zeta_H(-n,w)$. In this simpler 
case, making $s\to +\infty$ in the previous integral formula, 
yields the integral formula for $\mathbf{S}_{n}(w)$.

\begin{proposition}
For $n\leq -2$ we have
\begin{align*}
\mathbf{S}_{n}(w) &= \frac{1}{ (-n-1)! } \int_0^{+\infty}  t^{-n} \, \frac{1}{1-e^{-t}} \, \frac{e^{-wt} dt}{t}\\
&= \frac{1}{ (-n-1)! } \left (\int_0^{+\infty} t^{-n} \left (\frac{1}{1-e^{-t}} -\frac1t\right )\, \frac{e^{-wt} dt}{t} + (-n-2)! w^{n+1}\right )\;.
\end{align*}
\end{proposition}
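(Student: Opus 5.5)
We start from the integral representation of the partial sums in the preceding Lemma. For $n\leq -2$ we write $N=-n\geq 2$, so that
$$
\sum_{k=0}^{s-1}(k+w)^{n}=\frac{1}{(N-1)!}\int_0^{+\infty} t^{N}\,\frac{1-e^{-st}}{1-e^{-t}}\,\frac{e^{-wt}\,dt}{t}.
$$
We then let $s\to+\infty$. The left side converges to $\zeta_H(N,w)=\mathbf{S}_n(w)$, because for $n\leq -2$ the series converges absolutely when $\Re w>0$. On the right side the integrand is $t^{N-1}\frac{1-e^{-st}}{1-e^{-t}}e^{-wt}$. We justify passing to the limit by dominated convergence, which requires a uniform bound in $s$.

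For $t>0$ and real $s\geq 1$ we have $|1-e^{-st}|\leq 2$. Near $t=0$ we have $\frac{t}{1-e^{-t}}$ bounded, so the integrand is dominated by $C\,t^{N-2}e^{-\Re w\, t}$ on $(0,1]$. On $[1,+\infty)$ it is dominated by $C\,t^{N-1}e^{-\Re w\,t}$. Both are integrable since $N\geq 2$ and $\Re w>0$. The integrand converges pointwise to $t^{N-1}\frac{e^{-wt}}{1-e^{-t}}$, and dominated convergence gives the first equality. This is also just the classical Hermite-type formula $\zeta_H(N,w)=\frac{1}{\Gamma(N)}\int_0^\infty t^{N-1}\frac{e^{-wt}}{1-e^{-t}}dt$.

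For the second equality we split $\frac{1}{1-e^{-t}}=\left(\frac{1}{1-e^{-t}}-\frac1t\right)+\frac1t$. Each piece is integrable separately: the first is bounded near $0$ and tends to $1$ at infinity, and the second gives $t^{N-1}e^{-wt}/t$ with $N-1\geq 1$. The second piece is the Gamma integral
$$
\int_0^{+\infty} t^{N-1}e^{-wt}\,\frac{dt}{t}=\Gamma(N-1)\,w^{-(N-1)}=(N-2)!\,w^{n+1}.
$$
For complex $w$ with $\Re w>0$ this holds by analytic continuation from $w>0$, or by rotating the contour. Substituting $N=-n$ yields the stated formula.

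The only delicate point is the uniform domination near $t=0$, which is handled above. The rest is routine.
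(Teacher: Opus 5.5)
Your proof is correct and is essentially the paper's argument: the paper also lets $s\to+\infty$ in the integral formula for the partial sums $\sum_{k=0}^{s-1}(k+w)^n$, using $\mathbf{S}_n(w)=\zeta_H(-n,w)$ for $n\leq -2$. It does so without comment, and your dominated-convergence bound (valid since $-n\geq 2$) supplies the missing justification, while the second equality is exactly the split of the $1/t$ term evaluated as the Gamma integral $(-n-2)!\,w^{n+1}$.
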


In the case $n=-1$ we can also get the integral formula with some little  manipulation that is the simplest case of the proof in \cite{MPM4}. In this case, this is the well-known asymptotic for the harmonic series that yields Euler-Mascheroni constant (although the derivation in the textbooks is usually different).

\begin{proposition}\label{prop:Formula_n=-1}
For $n=-1$, we have
$$
\mathbf{S}_{-1}(w) = \int_0^{+\infty}  t\, \left (\frac{1}{1-e^{-t}} -\frac1t \right ) \, \frac{e^{-wt} dt}{t}-\log w\;.
$$
\end{proposition}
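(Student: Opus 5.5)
Start from the integral representation of the partial sums for $n=-1$ given by the previous lemma,
$$
\sum_{k=0}^{s-1}\frac{1}{k+w}=\int_0^{+\infty} t\,\frac{1-e^{-st}}{1-e^{-t}}\,\frac{e^{-wt}\,dt}{t},
$$
and split the kernel as $\frac{1}{1-e^{-t}}=\frac1t+\left(\frac{1}{1-e^{-t}}-\frac1t\right)$. The second bracket is bounded near $t=0$ and near $+\infty$, so
$$
\sum_{k=0}^{s-1}\frac{1}{k+w}=\int_0^{+\infty}(1-e^{-st})\left(\frac{1}{1-e^{-t}}-\frac1t\right)e^{-wt}\,dt+\int_0^{+\infty}(1-e^{-st})\,\frac{e^{-wt}\,dt}{t}.
$$

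The first integral converges absolutely. As $s\to+\infty$ it tends to $\int_0^{+\infty} t\left(\frac{1}{1-e^{-t}}-\frac1t\right)\frac{e^{-wt}\,dt}{t}$. The error is $-\int_0^{+\infty} e^{-(s+w)t}g(t)\,dt$ with $g$ analytic at $0$, and Watson's lemma shows this error is $O(1/s)$, with an expansion in powers of $1/s$.

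The second integral is a Frullani integral. Indeed,
$$
\int_0^{+\infty}\left(e^{-wt}-e^{-(s+w)t}\right)\frac{dt}{t}=\log(s+w)-\log w,
$$
valid for $\Re w>0$ with the principal branch, for example by analytic continuation from $w>0$. Then I write $\log(s+w)=\log s+\log(1+w/s)$, where $\log(1+w/s)\in\frac1s\,\Q[w][[1/s]]$.

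Collecting terms, the partial sum equals $\log s$ plus the constant
$$
\int_0^{+\infty} t\left(\frac{1}{1-e^{-t}}-\frac1t\right)\frac{e^{-wt}\,dt}{t}-\log w
$$
plus terms that are $O(1/s)$ with an expansion in $\frac1s$. By the uniqueness of the asymptotic expansion in the lemma for $n=-1$, this constant is $\mathbf{S}_{-1}(w)$.

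The main obstacle is only to check two things carefully: the convergence at $t=0$ after the splitting, and that the remainder really has the form required by the uniqueness statement. Both are handled by Watson's lemma together with the Taylor expansion of $\log(1+w/s)$.
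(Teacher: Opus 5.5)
Your proof is correct and follows essentially the same route as the paper: you split $\frac{1}{1-e^{-t}}=\frac1t+\bigl(\frac{1}{1-e^{-t}}-\frac1t\bigr)$, evaluate the $\frac1t$ part as the Frullani integral $\log\bigl(\frac{s}{w}+1\bigr)$, and bound the regular part's tail $\int_0^{+\infty} e^{-(s+w)t}\bigl(\frac{1}{1-e^{-t}}-\frac1t\bigr)\,dt$ as $\mathcal O(1/s)$, exactly as the paper does. The only difference is that you are a bit more explicit about the principal branch of $\log(s+w)-\log w$ and about the shape of the remainder; for identifying the constant term, an $o(1)$ remainder would already suffice.
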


Observe that for $w=1$ we have the classical integral formula for the Euler-Mascheroni constant $\gamma$ and  $S_{-1}(1)=\gamma $.

\begin{proof}
We use the integral formula
\begin{align*}
 &\sum_{k=0}^{s-1} \frac{1}{k+w} = \int_0^{+\infty}  t \,  \frac{1-e^{-st}}{1-e^{-t}} \, \frac{e^{-wt} dt}{t}\\
 &=- \int_0^{+\infty} \left (  (e^{-st}-1) +e^{-st} t \left ( \frac{1}{1-e^{-t}} -\frac1t \right ) - t \left (\frac{1}{1-e^{-t}}-\frac1t  \right ) \right ) \, \frac{e^{-wt} dt}{t}\\
 &= \log \left (\frac{s}{w} +1\right ) + \int_0^{+\infty}  t\, \left (\frac{1}{1-e^{-t}} -\frac1t \right ) \, \frac{e^{-wt} dt}{t} + \mathcal O(1/s)\\
 &= \log s + \left (  \int_0^{+\infty}  t\, \left (\frac{1}{1-e^{-t}} -\frac1t \right ) \, \frac{e^{-wt} dt}{t} -\log w \right ) +\mathcal O(1/s),
\end{align*}
where we have used the Frullani integral for the logarithm
\begin{align*}
\log \left (\frac{s}{w} +1\right ) &= \int_0^{+\infty} (1-e^{-st}) \, \frac{e^{-wt}}{t} dt \\
&=\log s -\log w+\mathcal{O}(1/s),
\end{align*}
and the fact that
$$
\int_0^{+\infty} \left ( \frac{1}{1-e^{-t}} -\frac1t \right )e^{-wt} \,   e^{-st} dt = \mathcal O(1/s),
$$
as we recognize the Laplace transform of a regular function at $t=0$.

\end{proof}
We conclude this section by observing  that for $n\leq -1$ the constants satisfy
 $$
 \frac{d}{dw}  \mathbf{S}_n(w) = n \, \mathbf{S}_{n-1}(w)
 $$
In the next section we generalize this differential formula for all $n\in \Z$ and derive from it the general integral formula for $\mathbf{S}_n(w)$.

\subsection{Recurrence.}
\subsubsection{Differential recurrence.}
\begin{proposition}
 For $w\in \mathbb C$ with $\Re(w)>0$, and $n\geq 1$, we have
 $$
 \frac{d}{dw}  \mathbf{S}_n(w) = n \, \mathbf{S}_{n-1}(w)
 $$
By iteration, it follows that for any $n \geq 1$,
 \[
\frac{d^n}{dw^n} \mathbf{S}_n(w) = n! \, \mathbf{S}_{0}(w).
\]
For $n=0$ we have
\[
 \frac{d}{dw}  \mathbf{S}_0(w) =  \mathbf{S}_{-1}(w)
\]
\end{proposition}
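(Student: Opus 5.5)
We differentiate the asymptotic expansion of Lemma \ref{lemma:132_5} term by term with respect to the parameter $w$, and then read off the constant term using uniqueness of the expansion.

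\textbf{Differentiating the partial sums.} For fixed $s$, the partial sum
$$
F_n(s,w)=\sum_{k=0}^{s-1}(k+w)^n\log(k+w)
$$
is holomorphic in $w$ for $\Re w>0$. Its derivative is
$$
\frac{\partial}{\partial w}F_n(s,w)=n\sum_{k=0}^{s-1}(k+w)^{n-1}\log(k+w)+\sum_{k=0}^{s-1}(k+w)^{n-1}.
$$
For $n\geq 1$, the second sum is a polynomial in $s$ and $w$ (Faulhaber). We claim its constant term in $s$ vanishes. Indeed, $\sum_{k=0}^{s-1}(k+w)^{n-1}=\frac{B_n(s+w)-B_n(w)}{n}$. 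Expanding $B_n(s+w)$ in powers of $s$, the $s^0$ part is $B_n(w)$, which cancels. So this sum lies in $s\,\Q[s,w]$ and contributes nothing to the constant term.

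For $n=0$, the derivative is simply $\sum_{k=0}^{s-1}(k+w)^{-1}$. Its constant term is $\mathbf{S}_{-1}(w)$ by the lemma for $n=-1$.

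\textbf{Differentiating the expansion term by term.} The main point is to justify that we may differentiate the asymptotic expansion in $w$ and that the result is again an asymptotic expansion of the same form. The remark after Lemma \ref{lemma:132_5} tells us each of the four subspaces is stable under $\partial_w$: polynomial times $\log s$, $s$ times polynomials (including $\log w$ times polynomials), functions of $w$ alone, and negative powers of $s$.

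For the remainder, we use the explicit Euler--Maclaurin form with $\alpha=0$, $\beta=s-1$:
\begin{itemize}
\item The boundary terms at $\beta$ are explicit functions of $s+w$. Their expansions in $1/s$ can be differentiated in $w$ termwise, since they are convergent Laurent/log expansions for large $s$.
\item The boundary terms at $\alpha=0$ depend only on $w$.
\item The remainder $R_m$, with $m$ large, is an integral of $B_{2m+1}(t-[t])$ against $\partial_z^{2m+1}f_n$. It converges absolutely, locally uniformly in $w$, so it may be differentiated under the integral sign. Since $\partial_w\partial_z^{2m+1}f_n=\partial_z^{2m+2}f_n$, the differentiated remainder is still $O(s^{n-2m-1})$.
\end{itemize}

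\textbf{Conclusion via uniqueness.} Combining these, $\partial_w F_n(s,w)$ has an asymptotic expansion whose constant term is $\frac{d}{dw}\mathbf{S}_n(w)$. On the other hand, the same function equals $n F_{n-1}(s,w)$ plus a sum with zero constant term. By uniqueness of the asymptotic expansion (Lemma \ref{lemma:132_5}), the constant terms agree, so $\mathbf{S}_n'(w)=n\,\mathbf{S}_{n-1}(w)$. The $n=0$ case gives $\mathbf{S}_0'(w)=\mathbf{S}_{-1}(w)$ in the same way.

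The iterated formula $\frac{d^n}{dw^n}\mathbf{S}_n(w)=n!\,\mathbf{S}_0(w)$ follows by induction on $n$.

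The main obstacle is the justification of term-by-term differentiation of the remainder $\mathbf{R}_n(w,1/s)$. We handle it by the locally uniform estimate on the differentiated Euler--Maclaurin remainder described above.
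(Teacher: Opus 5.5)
Your proposal is correct and is essentially the paper's argument: differentiate the partial sums in $w$, note that the extra term $\sum_{k=0}^{s-1}(k+w)^{n-1}$ lies in $s\,\Q[s,w]$ and so has no constant term, and conclude by uniqueness of the expansion. The paper applies $w\frac{d}{dw}$ rather than $\partial_w$ because the remark's subspace $s(\Q[w,s]\oplus\log w\,\Q[w,s])$ is stable under $w\frac{d}{dw}$ but not under $\partial_w$, which produces $1/w$; this is harmless for you, since only the absence of an $s^0$ term matters.

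Your Euler--Maclaurin justification that differentiating in $w$ keeps the remainder $o(1)$ makes explicit a step the paper only asserts. One correction: $R_m$ itself is not $O(s^{n-2m-1})$. Split $\int_0^{s-1}=\int_0^{\infty}-\int_{s-1}^{\infty}$. The tail $\int_{s-1}^{\infty}$ is $O(s^{n-2m})$ and its $w$-derivative is $O(s^{n-2m-1})$. The piece $\int_0^{\infty}$ is a holomorphic function of $w$ alone and feeds into the constant term.
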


\begin{proof}

For $n\geq 1$ we apply the differential operator $w d/dw$,
\begin{align*}
 &w \frac{d}{dw} \left( \sum_{k=0}^{s-1} (k+w)^n \log(k+w) \right) = nw \sum_{k=0}^{s-1} (k+w)^{n-1} \log(k+w) + w\sum_{k=0}^{s-1} (k+w)^{n-1} \\
 &= n w\sum_{k=0}^{s-1} (k+w)^{n-1} \log(k+w) + w\frac{B_n(s-1+w) - B_n(w-1)}{n},
\end{align*}
where $B_n(x)$ is the $n$-th Bernoulli polynomial, and by Faulhaber's formula
$\frac{w}{n}\left(B_n(s-1+w) - B_n(w-1)\right) \in s\Q[s,w]$. The uniqueness and the invariance of the three non-trivial subspaces of the asymptotic bases by the operator $w d/dw$, prove that the derivative of the constant term $S_n(w)$ coincides with the constant term of the series for $n-1$ multiplied by $n$.

For $n=0$, we have
$$
 \frac{d}{dw} \left( \sum_{k=0}^{s-1} \log(k+w) \right) =  \sum_{k=0}^{s-1} \frac{1}{k+w}
$$
and the same argument proves that
\[
 \frac{d}{dw}  \mathbf{S}_0(w) =  \mathbf{S}_{-1}(w)
\]
\end{proof}

\subsubsection{Proof of Theorem  \ref{thm:integral-rep-Sv}}

We define for $n\geq  -1$, and $\Re w >0$ the holomorphic function
$\mathbf{\hat S}_n(w)$ by
\begin{equation*}
\mathbf{\hat S}_n(w) = (-1)^{n+1} n! \left ( \int_0^{+\infty} \frac{1}{t^n} \left( \frac{1}{1-e^{-t}} - \sum_{k=-1}^{n} b_k t^{k}  \right) \frac{e^{-wt}}{t} \, dt +\mathbf{\hat r}_n(w)\right )\quad \text{for } n\geq 0
\end{equation*}
and
$$
\hat{\mathbf{S}}_{-1}(w) = \int_0^{+\infty}  t\, \left (\frac{1}{1-e^{-t}} -\frac1t \right ) \, \frac{e^{-wt} dt}{t}-\log w\;.
$$

where
\begin{equation}\label{eq:rn}
\mathbf{\hat r}_{n}(w) = \sum_{\ell=1}^{n+1} b_{n-\ell} \frac{(-1)^{\ell}}{\ell!} w^{\ell} H_{\ell} + \left ( \sum_{\ell=0}^{n+1} b_{n-\ell} \frac{(-1)^{\ell+1}}{\ell!} w^{\ell} \right )\log w.
\end{equation}

We  prove by induction for $n\geq -1$ that we have $\mathbf{\hat S}_n(w)=\mathbf{S}_n(w) $, in three steps:

$\bullet$ For $n=-1$ we have $\mathbf{\hat r}_{-1}(w)=-\log (w)$ (because $b_{-1}=1$) and we have  proved the result in  Proposition \ref{prop:Formula_n=-1}.

$\bullet$ For $w=1$ we have $\mathbf{\hat S}_n(1)=\mathbf{S}_n(1)$. This is the integral formula from \cite{MPM4}.

$\bullet$ We check that we have
$$
\frac{d}{dw}\mathbf{\hat S}_0(w) = \mathbf{\hat S}_{-1}(w),
$$
and for $n\geq 1$,
$$
\frac{d}{dw}\mathbf{\hat S}_n(w) = n\mathbf{\hat S}_{n-1}(w)
$$
For $n\geq 1$, differentiating inside the integral and taking out of the integral the $n$-th term of the expansion of the Bernoulli generating function, we need to check that
\begin{equation}\label{eq:rec-rn}
-\frac{d}{dw} \mathbf{\hat r}_{n}(w)-\frac{b_n}{w}
= \mathbf{\hat r}_{n-1}(w) \ .
\end{equation}
We compute
\begin{align*}
-\frac{d}{dw} \mathbf{\hat r}_{n}(w)-\frac{b_n}{w}  &= \sum_{\ell=1}^{n+1} b_{n-\ell} \frac{(-1)^{\ell-1}}{(\ell-1)!} w^{\ell-1} H_\ell + \left ( \sum_{\ell=1}^{n+1} b_{n-\ell}\frac{(-1)^{\ell}}{(\ell-1)!} w^{\ell-1} \right ) \log w \\
&\ \ + \sum_{\ell=0}^{n+1} b_{n-\ell} \frac{(-1)^{\ell}}{\ell!} w^{\ell-1} -\frac{b_n}{w}\\
&= \sum_{\ell=1}^{n+1} b_{n-\ell} \frac{(-1)^{\ell-1}}{(\ell-1)!} w^{\ell-1} \left (H_{\ell-1}+\frac{1}{\ell} \right ) + \left ( \sum_{\ell=1}^{n+1} b_{n-\ell}\frac{(-1)^{\ell}}{(\ell-1)!} w^{\ell-1} \right ) \log w \\
&\ \ + \sum_{\ell=1}^{n+1} b_{n-\ell} \frac{(-1)^{\ell}}{\ell!} w^{\ell-1} \\
&= \sum_{\ell=1}^{n} b_{(n-1)-l} \frac{(-1)^{\ell}}{\ell!} w^{\ell} H_{\ell} +\sum_{\ell=1}^{n+1} b_{n-\ell} \frac{(-1)^{\ell-1}}{\ell!} w^{\ell-1} \\
&+ \left ( \sum_{\ell=0}^{n} b_{(n-1)-l}\frac{(-1)^{\ell+1}}{\ell!} w^{\ell} \right ) \log w + \sum_{\ell=1}^{n+1} b_{n-\ell} \frac{(-1)^{\ell}}{\ell!} w^{\ell-1} \\
&=\sum_{\ell=1}^{n} b_{(n-1)-l} \frac{(-1)^{\ell}}{\ell!} w^{\ell} H_{\ell} + \left ( \sum_{\ell=0}^{n} b_{(n-1)-l}\frac{(-1)^{\ell+1}}{\ell!} w^{\ell} \right ) \log w\\
&=\mathbf{\hat r}_{n-1}(w)\;.
\end{align*}

\subsubsection{Proof of Corollary \ref{cor:base_field}}

It is enough to prove

\begin{proposition}
For $n\geq 0$, $\mathbf{\hat r}_{n}(w)$ is an exponential period over the field
$
 \Q(t, e^{-t}, w, e^{-wt}) \ .
$
\end{proposition}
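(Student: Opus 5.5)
We need to show that $\mathbf{\hat r}_n(w)$ is an exponential period over $\Q(t,e^{-t},w,e^{-wt})$. By formula (\ref{eq:rn}), $\mathbf{\hat r}_n(w)$ is a combination of monomials $w^\ell H_\ell$ and of terms $w^\ell\log w$, with rational coefficients $b_{n-\ell}(-1)^\ell/\ell!$. Since exponential periods over this field form a $\Q(w)$-vector space (indeed a ring), it suffices to treat each building block separately.

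\emph{Polynomial terms.} The terms $w^\ell H_\ell$ lie in $\Q[w]$. Any element $c$ of the base field is an exponential period, for instance
\[
c=\int_0^{+\infty} c\, e^{-t}\,dt .
\]
Here the integrand $c\,e^{-t}$ lies in the base field and the domain is semi-algebraic. So this part poses no problem.

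\emph{The logarithm.} The key input is the Frullani integral
\[
\log w=\int_0^{+\infty}\left(e^{-t}-e^{-wt}\right)\frac{dt}{t},
\qquad \Re w>0 .
\]
Its integrand $(e^{-t}-e^{-wt})/t$ belongs to $\Q(t,e^{-t},w,e^{-wt})$, and the integral converges absolutely: near $0$ the integrand is bounded, and at infinity it decays exponentially because $\Re w>0$. Multiplying by $w^\ell\in\Q[w]$ and moving this factor inside the integral gives
\[
w^\ell\log w=\int_0^{+\infty} w^\ell\left(e^{-t}-e^{-wt}\right)\frac{dt}{t},
\]
which is again an exponential period over the same field. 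Alternatively, the \emph{Higher Frullani integrals} invoked before Corollary \ref{cor:base_field} give representations of $w^\ell\log w$ directly. However, the elementary route of multiplying by the constant $w^\ell$ seems sufficient, because $w$ is itself in the base field.

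\emph{Assembling.} Summing these finitely many integrals over the same domain $(0,+\infty)$ expresses $\mathbf{\hat r}_n(w)$ as a single absolutely convergent integral
\[
\int_0^{+\infty} g_n(t,w)\,dt ,
\]
with $g_n(t,w)$ in the base field. Combining this with Theorem \ref{thm:integral-rep-Sv}, whose main integral already has its integrand in the base field, gives Corollary \ref{cor:base_field}.

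The only point needing care is the convergence bookkeeping when the pieces are merged. This is handled by merging integrals that are each individually absolutely convergent on the same domain. One must also fix the definition of exponential period being used (integrals over semi-algebraic domains of functions in the base field, absolutely convergent), and check that the variable $w$ may be treated as a constant of the base field. I do not expect any step to be a genuine obstacle.
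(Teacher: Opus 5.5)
Your proof is correct, and it departs from the paper only in the identity used for the logarithmic terms. The paper makes the same reduction to showing that each $w^\ell\log w$ is an exponential period. It then gets this from the Higher Frullani formula (Theorem \ref{thm:Frullani}) with $s=w-1$:
\[
w^\ell\log w=\sum_{k=1}^{\ell}\binom{\ell}{k}(H_\ell-H_{\ell-k})(w-1)^k+(-1)^{\ell+1}\ell!\int_0^{+\infty}\frac{1}{t^\ell}\left(e^{-(w-1)t}-\sum_{k=0}^{\ell}\frac{(1-w)^k t^k}{k!}\right)\frac{e^{-t}\,dt}{t}.
\]
This is a rational polynomial in $w-1$ plus an integral whose integrand lies in $\Q(t,e^{-t},w,e^{-wt})$.

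You instead use only the classical ($\ell=0$) Frullani integral and absorb $w^\ell$ into the integrand. That is legitimate precisely because $w$ belongs to the base field. Your route is more elementary and needs no auxiliary theorem, and it fully proves the membership statement.

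The paper's route buys uniformity. The Higher Frullani representation has the same Taylor-remainder shape $\frac{1}{t^n}(\cdots)\frac{e^{-t}dt}{t}$ as the main integral in Theorem \ref{thm:integral-rep-Sv}. The same identity, with $s=a+w-1$, is reused in the twisted case to convert remainder integrals into the terms $(a+w)^{n-k}\log(a+w)$.
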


In view of the explicit expression of $\mathbf{\hat r}_{n}(w)$ this Proposition follows from the following Lemma:

\begin{lemma}
 For $n\geq 0$, $w^n \log w$ is an exponential period over the field
$
 \Q(t, e^{-t}, w, e^{-wt}) \ .
$
 
\end{lemma}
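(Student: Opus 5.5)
We will produce an explicit integral of the form $\int_0^{+\infty} F(t)\,dt$ with $F$ built from elements of $\Q(t,e^{-t},w,e^{-wt})$, absolutely convergent, plus possibly a rational function of $w$. We proceed by induction on $n$, starting from the Frullani integral already used in the proof of Proposition \ref{prop:Formula_n=-1}. For $n=0$ we write
$$
\log w=\int_0^{+\infty}\left(e^{-t}-e^{-wt}\right)\frac{dt}{t},
$$
which is valid for $\Re w>0$. The integrand $(e^{-t}-e^{-wt})/t$ lies in $\Q(t,e^{-t},e^{-wt})$, and the integral converges absolutely: at $0$ the integrand tends to $w-1$, and at infinity it decays exponentially.

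For $n\geq 1$ the plan is to use a higher Frullani integral, namely to differentiate in $w$ or multiply by powers of $w$. The cleanest route is to note that
$$
w^n\log w=\int_0^{+\infty}\left(w^n e^{-t}-w^ne^{-wt}\right)\frac{dt}{t}.
$$
Since $w$ itself belongs to the base field, the integrand $w^n(e^{-t}-e^{-wt})/t$ still lies in $\Q(t,e^{-t},w,e^{-wt})$, and convergence is identical to the case $n=0$. In this sense the statement is essentially immediate once $w$ is counted in the base field.

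If one prefers to keep $w$-powers inside the genuinely "exponential" shape, as in the Higher Frullani integrals of \cite{MPM4}, an alternative is to differentiate
$$
\int_0^{+\infty}\left(\sum_{j=0}^{n}\frac{(-1)^j t^j(w-1)^j}{j!}e^{-t}\cdots\right)
$$
and subtract Taylor terms of $e^{-wt}$ at $t=0$ to get a convergent integral of the form
$$
\int_0^{+\infty}\frac{1}{t^{n}}\left(e^{-wt}-\sum_{j=0}^{n}\frac{(-w)^j t^j}{j!}e^{-t}\right)\frac{dt}{t}.
$$
This integral evaluates, by repeated integration by parts or by $n$-fold differentiation in $w$, to $\frac{(-1)^{n+1}}{n!}w^n\log w$ plus a polynomial in $w$ with rational coefficients (harmonic-number type terms). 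Polynomials in $w$ with rational coefficients are trivially exponential periods over the base field: take $\int_0^{+\infty}p(w)e^{-t}\,dt$.

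The only point requiring care is the convergence at $t=0$ in this second version, where the subtracted Taylor terms must cancel the singularity up to order $n$. The identification of the polynomial correction is also needed, but it is harmless since only membership in the period class is claimed.
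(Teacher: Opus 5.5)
Your main argument is correct, and it takes a more direct route than the paper. The paper proves the lemma by specializing the Higher Frullani integral (Theorem \ref{thm:Frullani}) at $s=w-1$, which gives
\[
w^n\log w=\sum_{k=1}^n\binom{n}{k}(H_n-H_{n-k})(w-1)^k+(-1)^{n+1}n!\int_0^{+\infty}\frac{1}{t^n}\left(e^{-wt}-e^{-t}\sum_{k=0}^{n}\frac{(1-w)^kt^k}{k!}\right)\frac{dt}{t}.
\]
You instead multiply the classical Frullani integral $\log w=\int_0^{+\infty}(e^{-t}-e^{-wt})\,\frac{dt}{t}$ by $w^n$, using only that $w$ lies in the base field $\Q(t,e^{-t},w,e^{-wt})$. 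Nothing is lost by doing this, because the paper's route also needs $w$ as a constant of the field: both its polynomial correction and the coefficients $(1-w)^k$ involve $w$. So your remark that the statement is essentially immediate is accurate. What the paper's route gains is the shape of the integrand. It produces the same normalized kernel $\frac{1}{t^n}(\cdots)\frac{e^{-t}dt}{t}$, with an explicit rational polynomial correction, as the integral in Theorem \ref{thm:integral-rep-Sv}. This lets $\hat{\mathbf r}_n(w)$ and $\mathbf S_n(w)$ be assembled into a single Higher-Frullani-type integral. Your route only certifies membership in the class of exponential periods.

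Your optional second version, however, is wrong as written. If you subtract $e^{-t}\sum_{j=0}^n\frac{(-w)^jt^j}{j!}$, then for $n\geq1$ you get $e^{-wt}-e^{-t}\sum_{j=0}^n\frac{(-wt)^j}{j!}=t+O(t^2)$. The integrand therefore behaves like $t^{-n}$ at $0$, and the integral diverges. The subtracted polynomial must be the Taylor polynomial of $e^{-(w-1)t}$, that is, $(1-w)^j$ in place of $(-w)^j$. With that fix the integral is exactly the paper's Higher Frullani integral displayed above. Your first argument does not depend on this paragraph, so drop it or correct it.
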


\begin{proof}
It is a direct  application of the Higher Frullani integral 
formula  (\cite{MPM1} and \cite{EM})  setting $s=w-1$.
\begin{theorem}[Higher Frullani Integrals] \label{thm:Frullani}
For $n\geq 0$, we have
\begin{align*}
(s+1)^n  \log (s+1)
&= \sum_{k=1}^n \binom{n}{k} (H_n-H_{n-k})s^k + \\
&\ \ \ +(-1)^{n+1} n! \int_0^{+\infty} \frac{1}{t^n}\left(e^{-st}- \sum_{k=0}^{n} \frac{(-s)^k t^k}{k!} \right)
\frac{e^{-t} dt}{t}\, .
\end{align*}

\end{theorem}

\end{proof}

\section{Relation to Hurwitz zeta function.}\label{sec:relation-to-Hurwitz}

In this section we prove Theorem \ref{thm:zetasn}.

For $\Re w >0$ and $\Re(s)>1$, we use  the integral representation of
the Hurwitz zeta function,
\[
\zeta_H(s,w)=\frac{1}{\Gamma(s)}\int_0^\infty t^{s-1} \frac{e^{-wt}}{1-e^{-t}}dt \ .
\]
For $n\geq 0$ and given $w$ with $\Re w>0$,
we have a meromorphic extension to $\Re s > -n-1$,
\begin{align*}
\zeta_H(s,w)
&=\frac{1}{\Gamma(s)} \int_0^\infty t^{s-1} \left(\frac{1}{1-e^{-t}} -\sum_{k=-1}^nb_k t^k \right) e^{-wt} \, dt+\sum_{k=-1}^n \frac{b_k}{w^{s+k}} \frac{\Gamma(s+k)}{\Gamma(s)} \\
&=\frac{1}{\Gamma(s)} \int_0^\infty t^{s-1} \left(\frac{1}{1-e^{-t}} -\sum_{k=-1}^nb_k t^k \right) e^{-wt} \, dt\\
& \ \ +\frac{b_{-1}}{w^{s-1}}\frac{1}{s-1}+\frac{b_0}{w^{s}}+\sum_{k=1}^n \frac{b_k}{w^{s+k}} (s+k-1)(s+k-2)\cdots s\;.
\end{align*}
The meromorphic extension is holomorphic at  $s = -n$ and we compute
its derivative with respect to the $s$-variable at this point.
First, we have for $1\leq k\leq n$,
 \[
 \frac{d}{ds} \Big((s+k-1)(s+k-2)\cdots s \Big )_{s=-n}=
 (-1)^{k+1}\frac{n!}{(n-k)!} (H_n-H_{n-k}).
 \]
and
\begin{align*}
\frac{\partial}{\partial s} \left (\frac{b_k}{w^{s+k}}
  \frac{\Gamma(s+k)}{\Gamma(s)}\right )_{{s=-n}}= & -b_k w^{n-k} (\log w) (-1)^{k} \frac{n!}{(n-k)!} \\
  & \ +b_kw^{n-k} (-1)^{k+1}\frac{n!}{(n-k)!} (H_n-H_{n-k}) \ .
\end{align*}
Moreover, since $\Gamma(-n)^{-1} =0$ and $\left ( \Gamma^{-1} \right )'(-n) =(-1)^n n!$, we have
\begin{align*}
&\frac{\partial}{\partial s} \left (\frac{1}{\Gamma(s)} \int_0^\infty t^{s-1} \left(\frac{1}{1-e^{-t}} -\sum_{k=-1}^nb_k t^k \right) e^{-wt} \, dt   \right )_{{s=-n}} \\
&= (-1)^n n!
 \int_0^\infty \frac{1}{t^n} \left(\frac{1}{1-e^{-t}} -\sum_{k=-1}^nb_k t^k \right) \frac{e^{-wt}}{t} \, dt
 \;.
\end{align*}
Therefore, we compute
 \begin{align*}
 \zeta_H'(-n,w)=&(-1)^n n!
 \int_0^\infty \frac{1}{t^n} \left(\frac{1}{1-e^{-t}} -\sum_{k=-1}^nb_k t^k \right) \frac{e^{-wt}}{t} \, dt\\
 &+\frac{b_{-1}}{n+1} w^{n+1}\log w-\frac{b_{-1}}{(n+1)^2}w^{n+1}-b_0 w^n\log w  \\
 &- \sum_{k=1}^n b_k w^{n-k} (\log w) (-1)^{k} \frac{n!}{(n-k)!}+\sum_{k=1}^n b_kw^{n-k} (-1)^{k+1}\frac{n!}{(n-k)!} (H_n-H_{n-k})\\
=&(-1)^n n!  \int_0^\infty \frac{1}{t^n} \left(\frac{1}{1-e^{-t}} -\sum_{k=-1}^nb_k t^k \right) \frac{e^{-wt}}{t} \, dt\\
 &-\frac{b_{-1}}{(n+1)^2}w^{n+1} + n!\sum_{\ell=0}^{n-1} b_{n-\ell}w^{\ell} (-1)^{n-\ell+1}\frac{1}{\ell !} (H_n-H_{\ell})\\
 &+(-1)^{n+1} n! \left(\sum_{\ell=0}^{n+1} b_{n-\ell}   \frac{(-1)^{\ell}}{\ell !}w^{\ell} \right)\log w \\
 &=(-1)^n n!  \int_0^\infty \frac{1}{t^n} \left(\frac{1}{1-e^{-t}} -\sum_{k=-1}^nb_k t^k \right) \frac{e^{-wt}}{t} \, dt\\
 &+(-1)^{n+1}n!\sum_{\ell=0}^{n+1} b_{n-\ell} \frac{(-1)^{\ell}}{\ell !} w^{\ell}(H_n-H_{\ell})\\
 &+(-1)^{n+1} n! \left(\sum_{\ell=0}^{n+1} b_{n-\ell}   \frac{(-1)^{\ell}}{\ell !}w^{\ell} \right)\log w\;.
\end{align*}
We recall the integral formula for $\mathbf{S}_n(w)$ from Theorem
\ref{thm:integral-rep-Sv},

\begin{equation*}
\mathbf{S}_n(w) = (-1)^{n+1} n! \left ( \int_0^{+\infty} \frac{1}{t^n} \left( \frac{1}{1-e^{-t}} - \sum_{k=-1}^{n} b_k t^{k} \right) \, \frac{e^{-wt} dt}{t}  +\mathbf{\hat r}_n(w)\right )
\end{equation*}
with
\[
\mathbf{\hat r}_{n}(w) = \sum_{\ell=1}^{n+1} b_{n-\ell} \frac{(-1)^{\ell}}{\ell!} w^{\ell} H_{\ell} + \left ( \sum_{\ell=0}^{n+1} b_{n-\ell} \frac{(-1)^{\ell+1}}{\ell!} w^{\ell} \right )\log w \;.
\]
From this formula it follows,

\begin{align*}
 \zeta_H'(-n,w)=&(-1)^n n!  \int_0^\infty \frac{1}{t^n} \left(\frac{1}{1-e^{-t}} -\sum_{k=-1}^nb_k t^k \right) \frac{e^{-wt}}{t} \, dt\\
 &+(-1)^{n+1}n! \left ( \sum_{\ell=0}^{n+1} b_{n-\ell}w^{\ell} (-1)^{\ell}\frac{1}{\ell !} \right ) H_n\\
 &+(-1)^{n} n! \, \mathbf{\hat r}_{n}(w) \\
 &=- \mathbf{S}_n(w) +(-1)^{n+1}n! \left ( \sum_{\ell=0}^{n+1} b_{n-\ell}w^{\ell} (-1)^{\ell}\frac{1}{\ell !} \right ) H_n \\
 &=- \mathbf{S}_n(w) +(-1)^{n+1}n! b_n(w) H_n \; .
\end{align*}
This concludes the proof of Theorem \ref{thm:zetasn}.

\section{Ramanujan-Candelpergher summation.}

\subsection{Background.}

Candelpergher has developed in his monograph \cite{Can}
a general theory of Ramanujan summation.
It is natural to ask about the relation with our ad hoc definition of Ramanujan
sum using the special form of the asymptotic expansions, which follows
closely Ramanujan's approach.

Ramanujan summation of the infinite series
$$
\sum_{k=1}^{+\infty} a_k
$$
is for series where the terms $a_k$ are given by $a_k=f(k)$ where $f$ is a function with good analytic properties. A holomorphic function $f(s)$ defined in the half plane
$\C_+(\sigma_0)=\{\Re s >\sigma_0\}$ for some $\sigma_0\in \RR$, $\sigma_0<1$, is of exponential type  $\alpha>0$,
if there exists  $\beta <\alpha$ and a constant $C>0$ such that for $s\in \C_+(\sigma_0)$,
$$
|f(s)|\leq C e^{\beta |s|} \ .
$$
The space of such functions is the vector space $\mathcal O^\alpha$. The classical
Theorem of Carlson (see \cite{Ca} and \cite{Can}) proves that if such a function $f$ exists then it
is unique in the space $\mathcal O^\pi$.
This is the main assumption in Candelpergher's procedure. Hence we are
able to sum only series where the general term $a_k$ grows at most exponentially with type $\alpha <\pi$.

Next, we find an interpolation function $\varphi$ of the finite sums,
$$
\varphi(s)=\sum_{k=1}^s a_k \;.
$$
that is, the function $\varphi$ is a solution of the difference equation
$$
\varphi(s)-\varphi (s-1) =a_s =f(s),
$$
such that $\varphi(0)=0$.
In general there is no unique solution to this difference equation. The solutions are
unique up to the addition of a $\Z$-periodic function vanishing at $0$.
In his notebooks, Ramanujan
introduces the  function $\varphi$ in his formulas and
computations, assuming that
$\varphi$ is a real interpolation, but he never considers
the latitude of the choice of $\varphi$, nor any complex extension of $\varphi$.
This is so because in practice, there is always a ``natural choice''. Indeed
in most cases the sequence $(a_k)$ is directly defined by a function $f$
in the space $\mathcal O^\pi$.

Difference equations were extensively studied by N\"orlund in \cite{No1} and
\cite{No2}. N\"orlund found conditions for the uniqueness of the solution when
$f$ extends to a complex neighborhood of the positive
real line $\RR_+$ and satisfies an exponential growth condition.

When $f\in \mathcal O^\pi$ there is a unique solution $\varphi\in \mathcal O^\pi$
of the difference
equation.  Again the uniqueness relies on Carlson's
Theorem. Candelpergher uses this function $\varphi$ to define the Ramanujan sum as
$$
\sum_{k\geq 1}^{\mathcal R} a_k =\int_0^1 \varphi(s) \, ds
$$
which has the expected good properties.

The case of interest for us is
the series giving $\mathbf S_n(w)$, with $\Re w>0$, namely
$$
\sum_{k=1}^{+\infty} (k+w)^n \log (k+w)
$$
and we want to understand the relation between
$\mathbf S_n(w) =S_{\mathcal R}((k+w)^n \log (k+w))$, given by the
constant term
of the asymptotic expansion, and Candelpergher's sum
$$
\sum_{k\geq 1}^{\mathcal R} (k+w)^n \log (k+w) \ .
$$
We can give explicit formulas for the function $\varphi$.
We have two approaches, via Hurwitz zeta function and via Frullani integrals.

\subsection{Hurwitz zeta function approach.}

For $\Re s >1$ and $\Re w >0$, the Hurwitz zeta function $\zeta_H$ is defined
by the converging series
$$
\zeta_H (s,w) =\sum_{k=0}^{+\infty} (k+w)^{-s} \ .
$$
(where $(k+w)^{-s} =\exp (-s\log(k+w))$ using the principal branch of the logarithm).
Since the sum is absolutely convergent, the function is holomorphic in the
two complex variables $(s,w)$. Therefore, we have for $\Re s >1$ and $\Re w >0$,
$$
\frac{\partial}{\partial s} \zeta_H (s,w)  =\zeta'_H (s,w) =  -\sum_{k=0}^{+\infty} (k+w)^{-s} \log(k+w)\,.
$$
Given $w$, the Hurwitz zeta function has a meromorphic extension in the $s$-variable
to the whole complex plane $s\in \C$, and the previous equation holds for
$(s,w)\in \C\times \C_+$. Moreover, we have, for $m\geq 1$,
$$
\sum_{k=1}^{m-1} (k+w)^{-s}=\zeta_H(s,w)-\zeta_H(s, m+w)-w^{-s},
$$
and differentiating with respect to the $s$-variable,
$$
\sum_{k=1}^{m} (k+w)^{-s}\log (k+w)=\zeta_H'(s, m+w)-\zeta_H'(s,w)+ (m+w)^{-s}\log (m+w)-w^{-s}\log w,
$$
specializing at $s=-n$ we get
$$
\varphi (m) = \zeta_H'(-n, w+m)-\zeta_H'(-n,w)+ (m+w)^{n}\log (m+w)-w^{n}\log w
$$
We check that $\varphi(0)=0$. We have that $\varphi$ is
holomorphic for $\Re m > -\Re w$. We need also to check that
$\varphi\in \mathcal O^\pi$.

\begin{lemma}
For fixed $s_0\in \C-\{1\}$ we have $\zeta_H(s_0,w)\in \mathcal O^\pi_w$.
\end{lemma}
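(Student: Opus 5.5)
The plan is to use the meromorphic continuation of $\zeta_H(s,w)$ from Section \ref{sec:relation-to-Hurwitz}. It shows that, for fixed $s_0$, the function $w\mapsto\zeta_H(s_0,w)$ is holomorphic on a half plane $\C_+(\sigma_0)$ with $0<\sigma_0<1$ (say $\sigma_0=1/2$) and has at most polynomial growth in $|w|$ there. Polynomial growth is bounded by $C_\beta e^{\beta|w|}$ for any $\beta>0$, in particular for some $\beta<\pi$, which gives $\zeta_H(s_0,\cdot)\in\mathcal O^\pi_w$.

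Concretely, fix $s_0\neq 1$ and choose an integer $n\geq 0$ with $\Re s_0>-n-1$. For $\Re w>0$ we use the representation
\[
\zeta_H(s_0,w)=\frac{1}{\Gamma(s_0)}\int_0^\infty t^{s_0-1}\, g_n(t)\, e^{-wt}\,dt+\frac{b_{-1}}{w^{s_0-1}}\frac{1}{s_0-1}+\sum_{k=0}^n \frac{b_k}{w^{s_0+k}}\frac{\Gamma(s_0+k)}{\Gamma(s_0)},
\qquad g_n(t)=\frac{1}{1-e^{-t}}-\sum_{k=-1}^n b_k t^k .
\]
This formula is valid by analytic continuation in $s$ from $\Re s>1$, where both sides agree. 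For $k\geq 0$ the quotients $\Gamma(s_0+k)/\Gamma(s_0)=s_0(s_0+1)\cdots(s_0+k-1)$ are polynomials in $s_0$, and $1/\Gamma$ is entire. Hence nothing degenerates when $s_0$ is a non-positive integer; the only excluded value is $s_0=1$, coming from the $b_{-1}$ term. Holomorphy in $w$ on $\Re w>0$ is clear term by term: the integral converges locally uniformly in $w$, and $w^{-s_0-k}=\exp(-(s_0+k)\log w)$ uses the principal branch.

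The growth estimate has two parts.
\begin{itemize}
\item \emph{Algebraic terms.} For $\Re w\geq\sigma_0$ we have $|\arg w|<\pi/2$, so
\[
|w^{-s_0-k}|\leq e^{\pi|\Im s_0|/2}\,|w|^{-\Re s_0-k}.
\]
Since $|w|\geq\sigma_0$, each such term is bounded by a constant times a power of $|w|$.
\item \emph{Integral term.} Near $t=0$ we have $g_n(t)=O(t^{n+1})$, so $|t^{s_0-1}g_n(t)|=O(t^{\Re s_0+n})$ with $\Re s_0+n>-1$, which is integrable at $0$. At infinity, $g_n(t)$ grows at most polynomially. For $\Re w\geq\sigma_0$ this gives
\[
\Bigl|\int_0^\infty t^{s_0-1}g_n(t)e^{-wt}\,dt\Bigr|\leq\int_0^\infty t^{\Re s_0-1}|g_n(t)|\,e^{-\sigma_0 t}\,dt<\infty,
\]
so this term is uniformly bounded on $\C_+(\sigma_0)$.
\end{itemize}
Altogether $|\zeta_H(s_0,w)|\leq C(1+|w|)^N$ on $\C_+(\sigma_0)$, and therefore $|\zeta_H(s_0,w)|\leq C' e^{\beta|w|}$ for any chosen $\beta\in(0,\pi)$.

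The main obstacle is a bookkeeping one, namely choosing the half plane correctly. The definition of $\mathcal O^\pi$ requires a half plane $\Re w>\sigma_0$ with $\sigma_0<1$. The bound on the integral degenerates as $\Re w\to 0$, and the principal-branch powers require $\Re w>0$. Taking $\sigma_0\in(0,1)$ resolves both issues. If a half plane reaching further left were needed, I would instead use $\zeta_H(s_0,w)=\zeta_H(s_0,w+1)+w^{-s_0}$, but this is not required here.
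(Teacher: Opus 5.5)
Your proposal is correct and is essentially the paper's proof. Both use the continuation of $\zeta_H(s_0,w)$ from Section~\ref{sec:relation-to-Hurwitz}, bound the integral term uniformly on $\Re w\geq\sigma_0>0$ by $\int_0^\infty t^{\Re s_0-1}\bigl|\frac{1}{1-e^{-t}}-\sum_{k=-1}^n b_kt^k\bigr|e^{-\sigma_0 t}\,dt$, and note that the remaining algebraic terms grow polynomially in $|w|$, giving $\zeta_H(s_0,\cdot)\in\mathcal O^{\eps}_w\subset\mathcal O^\pi_w$ for any $0<\eps<\pi$; your extra details (the estimate $|w^{-s_0-k}|\leq e^{\pi|\Im s_0|/2}|w|^{-\Re s_0-k}$, integrability at $t=0$, and the choice $\sigma_0\in(0,1)$ to match the definition of $\mathcal O^\pi$) make explicit what the paper leaves implicit.
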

\begin{proof}
Take $n\geq 1$ large enough so that $\Re s_0 > -n-1$ and use the meromorphic
extension from section \ref{sec:relation-to-Hurwitz}
\begin{align*}
\zeta_H(s_0,w)
&=\frac{1}{\Gamma(s_0)} \int_0^\infty t^{s_0-1} \left(\frac{1}{1-e^{-t}} -\sum_{k=-1}^nb_k t^k \right) e^{-wt} \, dt\\
& \ \ +\frac{b_{-1}}{w^{s_0-1}}\frac{1}{s_0-1}+\frac{b_0}{w^{s_0}}+\sum_{k=1}^n \frac{b_k}{w^{s_0+k}} (s_0+k-1)(s_0+k-2)\cdots s_0\;.
\end{align*}
For $\Re w\geq \sigma_0>0$, the integral is uniformly bounded in $w$ by
$$
\int_0^\infty t^{\Re s_0-1} \left |\frac{1}{1-e^{-t}} -\sum_{k=-1}^nb_k t^k \right | e^{-\sigma_0t} \, dt,
$$
and the rest of the expression has polynomial growth in $|w|$, hence for any $0<\eps<\pi$, $\zeta_H(s_0,w)\in \mathcal O^\eps_w \subset \mathcal O^\pi_w$
\end{proof}

\begin{proposition}
Let
$$
\varphi (x) = \zeta_H'(-n, w+x)-\zeta_H'(-n,w)+ (x+w)^{n}\log (x+w)-w^{n}\log w,
$$
then we have that $\varphi$ is holomorphic for $x\in \C$ with $\Re x > -\Re w$, and
$\varphi \in \mathcal O^\pi$.
\end{proposition}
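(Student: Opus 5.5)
The plan is to treat the four terms of $\varphi$ separately. The two constant terms $-\zeta_H'(-n,w)$ and $-w^n\log w$ are harmless. The real work is to show that $x\mapsto \zeta_H'(-n,w+x)$ and $x\mapsto (x+w)^n\log(x+w)$ are holomorphic on $\Re x>-\Re w$ and grow subexponentially on a smaller half plane $\Re x>\sigma_0$ with $-\Re w<\sigma_0<1$, as required in the definition of $\mathcal O^\pi$.

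\emph{Holomorphy.} Since $\Re(x+w)>0$ there, the principal branch makes $(x+w)^n\log(x+w)$ holomorphic. For the zeta term I use the meromorphic extension of Section~\ref{sec:relation-to-Hurwitz} with the truncation index $N\geq n+1$, replacing $w$ by $u=w+x$. In this extension:
\begin{itemize}
\item the integrand $t^{s-1}\bigl(\frac{1}{1-e^{-t}}-\sum_{k=-1}^N b_k t^k\bigr)e^{-ut}$ is jointly holomorphic in $(s,u)$;
\item it is dominated locally uniformly for $\Re s>-N-1$ and $\Re u\geq \delta>0$;
\item the remaining terms are explicit functions $u^{-s-k}$ times rational functions in $s$, holomorphic in $(s,u)$ near $s=-n$ for $\Re u>0$.
\end{itemize}
It follows that $\zeta_H(s,u)$ is jointly holomorphic near $s=-n$. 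Hence $\partial_s\zeta_H(s,u)|_{s=-n}$ is holomorphic in $u$.

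\emph{Growth.} Rather than differentiating the previous lemma abstractly, I use the explicit formula for $\zeta_H'(-n,u)$ computed in Section~\ref{sec:relation-to-Hurwitz}. It consists of two parts.
\begin{itemize}
\item The first part is $(-1)^n n!\int_0^\infty t^{-n}\bigl(\frac{1}{1-e^{-t}}-\sum_{k=-1}^n b_kt^k\bigr)\frac{e^{-ut}}{t}\,dt$. For $\Re u\geq\delta>0$ this is bounded in absolute value by the same integral with $e^{-\delta t}$. That bound is finite because the bracket is $O(t^{n+1})$ at $0$ and grows polynomially at $\infty$.
\item The second part is a polynomial in $u$ plus a polynomial in $u$ times $\log u$. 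Since $|\log u|\leq \log|u|+\pi/2$, it is $O(|u|^{n+2})$.
\end{itemize}
The same polynomial bound holds for $(x+w)^n\log(x+w)$.

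Now fix $\sigma_0$ with $-\Re w<\sigma_0<\min(1,0)$ and put $\delta=\sigma_0+\Re w>0$. Then for $\Re x>\sigma_0$ we have $\Re(w+x)>\delta$, and $|\varphi(x)|\leq C(1+|x|)^{n+2}$. Polynomial growth is bounded by $C_\beta e^{\beta|x|}$ for every $\beta>0$, so $\varphi\in\mathcal O^\varepsilon\subset\mathcal O^\pi$.

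I expect the main obstacle to be the holomorphy in $u$ of $\zeta_H'(-n,u)$, that is, justifying that the derivative in $s$ may be taken and keeps holomorphy in $u$. This is handled by the joint holomorphy above, by differentiating under the integral with uniform domination on compact sets, or by Cauchy's formula in $s$. Everything else is bookkeeping.
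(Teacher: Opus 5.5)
Your proof is correct and follows essentially the same route as the paper. You split $\varphi$ into its terms, bound the integral part of the meromorphic extension uniformly on $\Re u\geq\delta>0$, and observe that everything else grows polynomially. By bounding the explicit formula for $\zeta_H'(-n,u)$ directly, you are in fact slightly more careful than the paper, whose auxiliary lemma is stated for $\zeta_H(s_0,\cdot)$ but is then applied to $\zeta_H'(-n,\cdot)$.

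One minor slip: the inequality should read $|\log u|\leq |\log|u||+\pi/2$. This is harmless here since $|u|\geq\delta$.
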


\begin{proof}
Using the Lemma for $s_0=-n$ we get that $m\mapsto\zeta_H'(-n,w+m) \in \mathcal O^\pi$
(the space $\mathcal O^\pi$ is invariant by translations with positive real part). Also it is clear that $m\mapsto (m+w)^n \log(m+w) \in \mathcal O^\pi$ since it has
polynomial growth. Finally $\mathcal O^\pi$ is a vector space and therefore
$\varphi \in \mathcal O^\pi$.
\end{proof}

Now, we conclude:\begin{theorem} \label{thm:FormulaRC}
Let $n\geq 0$, we have
$$
\sum_{k \ge 1}^{\mathcal{R}} (k+w)^n\log (k+w)=-\frac{(w+1)^{n+1}}{(n+1)^2}  + \frac{(w+1)^{n+1}}{n+1}\log(w+1)
-\zeta_H'(-n,w)-w^{n}\log w\;.
$$
\end{theorem}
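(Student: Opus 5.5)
By Candelpergher's definition, once $\varphi$ is the interpolating function from the preceding Proposition (holomorphic for $\Re x>-\Re w$, in $\mathcal O^\pi$, with $\varphi(0)=0$ and $\varphi(x)-\varphi(x-1)=(x+w)^n\log(x+w)$), we have
$$
\sum_{k\ge 1}^{\mathcal R}(k+w)^n\log(k+w)=\int_0^1\varphi(x)\,dx .
$$
The plan is to evaluate this integral term by term. First I would check the difference equation directly. Shifting $w\mapsto w+x$ in the relation $\zeta_H(s,w)-\zeta_H(s,w+1)=w^{-s}$ and differentiating in $s$ at $s=-n$ gives $\zeta_H'(-n,w+x-1)-\zeta_H'(-n,w+x)=-(w+x-1)^n\log(w+x-1)$. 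Hence $\varphi(x)-\varphi(x-1)=(x+w)^n\log(x+w)$, and $\varphi(0)=0$ is immediate. Uniqueness in $\mathcal O^\pi$ (Carlson) then identifies $\varphi$ as Candelpergher's interpolant.

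Next I integrate the four terms of $\varphi$ over $[0,1]$. The constant terms contribute $-\zeta_H'(-n,w)-w^n\log w$, which already matches the last two terms of the claimed formula. For the polynomial-log term, $\int_0^1 (x+w)^n\log(x+w)\,dx=\left[\frac{y^{n+1}}{n+1}\log y-\frac{y^{n+1}}{(n+1)^2}\right]_{w}^{w+1}$. For the Hurwitz term the key input is the Raabe-type identity
$$
\int_0^1\zeta_H(s,w+x)\,dx=\frac{w^{1-s}}{s-1},
$$
which holds for $\Re s>1$ by integrating $\frac{\partial}{\partial x}\zeta_H(s-1,w+x)=-(s-1)\zeta_H(s,w+x)$ and using $\zeta_H(s-1,w)-\zeta_H(s-1,w+1)=w^{1-s}$. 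It extends to $s\neq1$ by analytic continuation, since both sides are holomorphic in $s$ and the integral over the compact $x$-interval converges locally uniformly. Differentiating in $s$ and setting $s=-n$ gives
$$
\int_0^1\zeta_H'(-n,w+x)\,dx=-\frac{w^{n+1}}{n+1}\log w-\frac{w^{n+1}}{(n+1)^2}.
$$

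Adding everything, the $w^{n+1}$ contributions from the Hurwitz integral, namely $-\frac{w^{n+1}\log w}{n+1}-\frac{w^{n+1}}{(n+1)^2}$, cancel exactly against the lower endpoint of the polynomial-log primitive, which is $-\left(\frac{w^{n+1}\log w}{n+1}-\frac{w^{n+1}}{(n+1)^2}\right)$. What remains is $\frac{(w+1)^{n+1}}{n+1}\log(w+1)-\frac{(w+1)^{n+1}}{(n+1)^2}-\zeta_H'(-n,w)-w^n\log w$, which is the claimed formula.

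The main obstacle is justifying the Raabe identity at $s=-n$: the analytic continuation in $s$ has to commute with both the $x$-integration and the $s$-differentiation. I would handle this with the explicit meromorphic extension from Section \ref{sec:relation-to-Hurwitz}, which is jointly holomorphic in $(s,w)$ for $\Re w>0$ and locally uniformly bounded, so differentiation under the integral sign is legitimate. The remaining steps are routine bookkeeping of signs.
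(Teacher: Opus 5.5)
Your strategy is the paper's. You use the same interpolant $\varphi$, integrate the elementary term $(x+w)^n\log(x+w)$ over $[0,1]$, and obtain $\int_0^1\zeta_H'(-n,w+x)\,dx$ by differentiating the Raabe-type identity $\int_0^1\zeta_H(s,w+x)\,dx=\frac{w^{1-s}}{s-1}$ at $s=-n$. Your derivation of that identity from $\partial_x\zeta_H(s-1,w+x)=-(s-1)\zeta_H(s,w+x)$ and the shift relation is a valid substitute for the paper's interchange of sum and integral for $\Re s>1$. Since both relations persist for the meromorphically continued function, your argument even gives the identity directly at $s=-n$, which largely removes what you call the main obstacle.

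There is, however, a sign error that breaks the final bookkeeping as written. We have $\frac{\partial}{\partial s}\frac{w^{1-s}}{s-1}=-\frac{w^{1-s}\log w}{s-1}-\frac{w^{1-s}}{(s-1)^2}$, and at $s=-n$ one has $s-1=-(n+1)$. Hence
\[
\int_0^1\zeta_H'(-n,w+x)\,dx=\frac{w^{n+1}}{n+1}\log w-\frac{w^{n+1}}{(n+1)^2},
\]
not $-\frac{w^{n+1}}{n+1}\log w-\frac{w^{n+1}}{(n+1)^2}$. As a check at $n=0$: Lerch's formula $\zeta_H'(0,w)=\log\Gamma(w)-\frac12\log(2\pi)$ and Raabe's integral $\int_0^1\log\Gamma(w+x)\,dx=\frac12\log(2\pi)+w\log w-w$ give $w\log w-w$ for the left side.

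With the value you wrote, the cancellation you claim against the lower-endpoint contribution $-\frac{w^{n+1}}{n+1}\log w+\frac{w^{n+1}}{(n+1)^2}$ does not occur. The $\frac{w^{n+1}}{(n+1)^2}$ terms cancel, but the logarithmic terms add up to $-\frac{2w^{n+1}}{n+1}\log w$, so your computation would not yield the theorem. With the corrected sign, the Hurwitz contribution is exactly the negative of the lower-endpoint term. The rest of your argument (difference equation, $\varphi(0)=0$, the constant terms $-\zeta_H'(-n,w)-w^n\log w$, the upper endpoint) then gives the stated formula unchanged.
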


\begin{lemma} Let $n\geq 0$, we have
$$
\int_0^1 \zeta_H'(-n,x+w)\, dx= -\frac{w^{n+1}}{(n+1)^2}  + \frac{w^{n+1}}{n+1}\log(w)\,.
$$
\end{lemma}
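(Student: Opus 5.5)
The plan is to first prove an identity for $\zeta_H$ itself, valid for every $s\neq 1$, and then differentiate it in $s$ at $s=-n$. The identity I aim for is
\begin{equation*}
\int_0^1 \zeta_H(s,x+w)\, dx = \frac{w^{1-s}}{s-1}, \qquad s\in\C-\{1\},\ \Re w>0 .
\end{equation*}

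\textbf{Step 1: the identity for $\Re s>1$.} In this range the series $\sum_{k\geq 0}(k+x+w)^{-s}$ converges absolutely and uniformly for $x\in[0,1]$. So I can interchange sum and integral and obtain
\begin{equation*}
\int_0^1 \zeta_H(s,x+w)\,dx=\sum_{k=0}^{+\infty}\int_{k}^{k+1}(u+w)^{-s}\,du=\int_0^{+\infty}(u+w)^{-s}\,du=\frac{w^{1-s}}{s-1}.
\end{equation*}
The last integral is taken along the ray $w+\RR_+$ with the principal branch. The primitive $(u+w)^{1-s}/(1-s)$ tends to $0$ at infinity because $\Re s>1$.

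\textbf{Step 2: extension to all $s\neq 1$.} Both sides are holomorphic in $s$ on $\C-\{1\}$. For the right-hand side this is clear. For the left-hand side, $(s,x)\mapsto\zeta_H(s,x+w)$ is holomorphic in $s$ and continuous in $(s,x)$ on $(\C-\{1\})\times[0,1]$. This follows from the meromorphic extension written in Section~\ref{sec:relation-to-Hurwitz}, with $n$ chosen large on each compact set. Integrating over the compact interval $[0,1]$ therefore preserves holomorphy, by Morera's theorem or differentiation under the integral sign. The identity theorem then extends the equality from $\Re s>1$ to all of $\C-\{1\}$.

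\textbf{Step 3: differentiate at $s=-n$.} The same local uniformity justifies exchanging $\partial/\partial s$ with $\int_0^1 dx$. We have
\begin{equation*}
\frac{\partial}{\partial s}\frac{w^{1-s}}{s-1}=-\frac{w^{1-s}\log w}{s-1}-\frac{w^{1-s}}{(s-1)^2}.
\end{equation*}
At $s=-n$ this equals $\frac{w^{n+1}}{n+1}\log w-\frac{w^{n+1}}{(n+1)^2}$, which is exactly the claimed value.

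\textbf{Main obstacle.} The only delicate point is the analytic justification in Step 2. One needs continuity of $\zeta_H(s,x+w)$ jointly in $(s,x)$ and local uniform bounds, so that integration in $x$ and differentiation in $s$ commute after the continuation. I will obtain this from the explicit continuation formula of Section~\ref{sec:relation-to-Hurwitz}. In that formula the integral term is bounded uniformly for $\Re(x+w)\geq \Re w>0$, and the remaining terms are explicit holomorphic functions of $(s,x)$.
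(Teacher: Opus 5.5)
Your proposal is correct and follows the paper's proof: compute $\int_0^1\zeta_H(s,x+w)\,dx=\frac{w^{1-s}}{s-1}$ for $\Re s>1$ by turning the sum of integrals over $[k,k+1]$ into $\int_0^{+\infty}(u+w)^{-s}\,du$, extend to all $s\neq 1$ by analytic continuation, and differentiate in $s$ at $s=-n$. Your Step 2 makes explicit the analytic justification that the paper compresses into a single sentence. That justification is joint continuity and local uniform bounds coming from the continuation formula of Section~\ref{sec:relation-to-Hurwitz}.
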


\begin{proof}
We have for $\Re s>1$,
\begin{align*}
\int_0^1\zeta_H(s,x+w) \, dx &= \sum_{k=0}^\infty\int_0^1 \frac{1}{(k+x+w)^s}\, dx \\
&= \int_0^\infty \frac{1}{(x+w)^s} \, dx \\
&= \left[\frac{1}{-s+1}(x+w)^{-s+1}\right]_0^\infty \\
&= \frac{1}{s-1}\frac{1}{w^{s-1}}\,.
\end{align*}
This integral depends meromorphically on the parameter $s\in \C$, hence the equality
is valid for all $s\in \C$ integrating the meromorphic extension of the integrand.
We can take the partial derivative $\partial/\partial s$ and we get
$$
\int_0^1 \zeta'_H(s,x+w) \, dx = -\frac{1}{(s-1)^2} \frac{1}{w^{s-1}}
-\frac{1}{s-1} \frac{1}{w^{s-1}} \log w \, .
$$
Making $s=-n$ we get the result.
\end{proof}

\begin{lemma}\label{lem:simple_integral}
We have for all $n\geq 0$
$$
\int_0^1 (x+w)^n\log(x+w) \, dx = \frac{(w+1)^{n+1}}{n+1} \log(w+1)
-\frac{w^{n+1}}{n+1} \log(w) -\frac{(w+1)^{n+1}}{(n+1)^2} + \frac{w^{n+1}}{(n+1)^2}\,.
$$
\end{lemma}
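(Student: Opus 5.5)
The plan is to compute the integral directly by antidifferentiation. Write $F(x)$ for an antiderivative of $f(x)=(x+w)^n\log(x+w)$ on the real segment $x\in[0,1]$. Since $\Re w>0$, the point $x+w$ stays in the right half plane. There the principal branch of $\log$ is holomorphic, so the fundamental theorem of calculus applies along the segment and
\[
\int_0^1 f(x)\,dx = F(1)-F(0).
\]

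The candidate antiderivative comes from integration by parts (equivalently, differentiate the guess):
\[
F(x)=\frac{(x+w)^{n+1}}{n+1}\log(x+w)-\frac{(x+w)^{n+1}}{(n+1)^2}.
\]
Differentiating in $x$, the first term gives
\[
(x+w)^n\log(x+w)+\frac{(x+w)^{n}}{n+1},
\]
and the second term gives $-\frac{(x+w)^n}{n+1}$. The extra pieces cancel, so $F'=f$. The derivative $\frac{d}{dx}\log(x+w)=\frac{1}{x+w}$ is valid for the principal branch on $\Re(x+w)>0$.

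Evaluating at the endpoints gives
\[
F(1)=\frac{(w+1)^{n+1}}{n+1}\log(w+1)-\frac{(w+1)^{n+1}}{(n+1)^2}
\]
and
\[
F(0)=\frac{w^{n+1}}{n+1}\log w-\frac{w^{n+1}}{(n+1)^2}.
\]
Subtracting yields exactly the four terms of the statement. The only point needing care is the branch of the logarithm, which is handled by $\Re w>0$; no real obstacle is expected.

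Alternatively, the identity could be checked first for real $w>0$ and then extended by analytic continuation in $w$, since both sides are holomorphic for $\Re w>0$. The direct computation above already works for complex $w$, so this step is unnecessary.
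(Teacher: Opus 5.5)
Your proposal is correct and is essentially the paper's proof: the paper substitutes $u=x+w$ and integrates $u^n\log u$ by parts over $[w,w+1]$. That yields exactly your antiderivative $\frac{(x+w)^{n+1}}{n+1}\log(x+w)-\frac{(x+w)^{n+1}}{(n+1)^2}$, which you verify by differentiation and evaluate at the endpoints. Your remark that $\Re w>0$ keeps the principal branch of $\log(x+w)$ holomorphic along the segment is a welcome detail the paper leaves implicit.
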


\begin{proof}
Let $n\geq 0$, we have
\begin{align*}
\int_0^1 (x+w)^n \log(x+w) \, dx &= \int_w^{w+1} u^n \log u \, du \\
&=\left [ \frac{u^{n+1}}{n+1} \log u \right ]_w^{w+1} -\frac{1}{n+1} \int_w^{w+1} u^n \, du,
\end{align*}
and the result follows.
\end{proof}
\begin{proof}[Proof of Theorem \ref{thm:FormulaRC}]
We have using both Lemmas,
\begin{align*}
&\sum_{k \ge 1}^{\mathcal{R}} (k+w)^n\log (k+w) = \int_0^1 \varphi(x) \, dx \\
&=\int_0^1 \zeta_H'(-n,x+w) \, dx + \int_0^1 (x+w)^n\log(x+w) \, dx
-\zeta'(-n,w)-w^{n}\log w \\
&=-\frac{(w+1)^{n+1}}{(n+1)^2}  + \frac{(w+1)^{n+1}}{n+1}\log(w+1)
-\zeta'(-n,w)-w^{n}\log w\,.
\end{align*}
\end{proof}

\begin{corollary}\label{cor:165_1}
Let $n\geq 0$, we have
\[
\begin{split}
\sum_{k \ge 1}^{\mathcal{R}} (k+w)^n\log (k+w)=& \mathbf{S}_n(w) +(-1)^{n} n! b_n(w) H_n
-\frac{(w+1)^{n+1}}{(n+1)^2} \\
& + \frac{(w+1)^{n+1}}{n+1}\log(w+1)
-w^n\log w\,.
\end{split}
\]
\end{corollary}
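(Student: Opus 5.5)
The plan is to obtain Corollary \ref{cor:165_1} by eliminating $\zeta_H'(-n,w)$ between two formulas already proved. The first is the Candelpergher evaluation in Theorem \ref{thm:FormulaRC} (the version just established via the Hurwitz zeta function approach):
$$
\sum_{k \ge 1}^{\mathcal{R}} (k+w)^n\log (k+w)=-\frac{(w+1)^{n+1}}{(n+1)^2}  + \frac{(w+1)^{n+1}}{n+1}\log(w+1)
-\zeta_H'(-n,w)-w^{n}\log w .
$$
The second is the relation of Theorem \ref{thm:zetasn}, $\mathbf{S}_n(w) = -\zeta_H'(-n,w) +(-1)^{n+1} n!\, b_n(w) H_n$.

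First, solve Theorem \ref{thm:zetasn} for the derivative:
$$
-\zeta_H'(-n,w)=\mathbf{S}_n(w)-(-1)^{n+1}n!\,b_n(w)H_n=\mathbf{S}_n(w)+(-1)^{n}n!\,b_n(w)H_n .
$$
Second, substitute this into the right-hand side of Theorem \ref{thm:FormulaRC}. The remaining terms $-\frac{(w+1)^{n+1}}{(n+1)^2}$, $\frac{(w+1)^{n+1}}{n+1}\log(w+1)$ and $-w^n\log w$ carry over unchanged, and this gives exactly the claimed identity.

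There is no real obstacle; the only care needed is sign bookkeeping when flipping $(-1)^{n+1}$ to $(-1)^n$. As a consistency check, I would also move the terms to the other side and verify that the result matches the earlier introductory statement of Theorem \ref{thm:FormulaRC}, which expresses $\mathbf{S}_n(w)$ in terms of the Candelpergher sum. The hypotheses $\Re w>0$ and $n\ge 0$ are inherited from both cited results, and the principal branch of the logarithm is used throughout.
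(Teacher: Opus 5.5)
Your proposal is correct and is exactly how the paper obtains this corollary: you substitute $-\zeta_H'(-n,w)=\mathbf{S}_n(w)+(-1)^{n}n!\,b_n(w)H_n$ from Theorem~\ref{thm:zetasn} into the Hurwitz-zeta evaluation $\int_0^1\varphi(x)\,dx$ of the Candelpergher sum, and your sign bookkeeping is right. The paper also re-derives the same identity later, independently, via the Frullani-integral formula for $\varphi$ combined with Theorem~\ref{thm:integral-rep-Sv}, but your argument does not need that.
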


\subsection{Frullani integrals approach.}

As in \cite{MPM4}, using Higher Frullani integrals, we can give an integral formula for the finite sums,
$$
\varphi(s)=\sum_{k=1}^s (k+w)^n \log(k+w).
$$
Higher Frullani integrals, Theorem  \ref{thm:Frullani}, provide an integral expression for the functions $s^n\log s$. We have

\begin{align*}
&\varphi(s)= \sum_{k=0}^{s-1}(k+w)^n \log(k+w)+(s+w)^n\log(s+w)-w^n\log w \\
&= w^n\sum_{k=0}^{s-1} \left(\frac{k}{w}+1\right)^n \log \left(\frac{k}{w}+1\right)
+  \left (\sum_{k=0}^{s-1}(k+w)^n \right )\log w +(s+w)^n\log(s+w)-w^n\log w \\
&= w^n\sum_{k=0}^{s-1} \left(\frac{k}{w}+1\right)^n \log \left(\frac{k}{w}+1\right)
+  \left (\sum_{k=1}^{s-1}(k+w)^n \right )\log w +(s+w)^n\log(s+w) \\
&= w^n\sum_{k=0}^{s-1} \left(\frac{k}{w}+1\right)^n \log \left(\frac{k}{w}+1\right)
+ \log w \sum_{j=0}^{n} \binom{n}{j} w^{n-j} \left (\sum_{k =1}^{s-1} k^j\right )
+(s+w)^n\log(s+w) \\
&=\sum_{k=1}^{n} \binom{n}{k} w^{n-k}\left(H_n - H_{n-k}\right)
\left (\sum_{\ell=0}^{s-1} \ell^k  \right ) \\
&+ (-1)^{n+1} n! \int_0^{+\infty} \frac{1}{t^{n}} \left( \frac{1-e^{-st}}{1-e^{-t}} -\sum_{k=0}^{n} \frac{(-t)^k}{k!} \left (\sum_{\ell={0}}^{s-1} \ell^k \right )\right) \frac{e^{-wt}}{t} \, dt\\
&+ \log w \sum_{j=0}^{n} \binom{n}{j} w^{n-j} \left (\sum_{\ell =1}^{s-1} \ell^j\right )
+(s+w)^n\log(s+w),
\end{align*}
where in the last step we have used
$$
\sum_{\ell=0}^{s-1} e^{-\ell t} =\frac{1-e^{-st}}{1-e^{-t}},
$$
and made the change of variable $t\mapsto wt$ in the integral.
Observe that
 $$
 \sum_{\ell={0}}^{s-1} \ell^k =(-1)^{k} k! (b_k -b_k(s)),
 $$
is the Faulhaber polynomial, hence we have computed  an explicit integral expression
for $\varphi$.
\begin{proposition}\label{prop:27_1} We have
 \begin{align*}
\vf(s)&= \sum_{k=1}^{n} \binom{n}{k} w^{n-k}\left(H_n - H_{n-k}\right)
\left (\sum_{\ell=0}^{s-1} \ell^k  \right ) \\
&+ (-1)^{n+1} n! \int_0^{+\infty} \frac{1}{t^{n}} \left( \frac{1-e^{-st}}{1-e^{-t}} -\sum_{k=0}^{n} \frac{(-t)^k}{k!} \left (\sum_{\ell={0}}^{s-1} \ell^k \right )\right) \frac{e^{-wt}}{t} \, dt\\
&+ \log w \sum_{j=0}^{n} \binom{n}{j} w^{n-j} \left (\sum_{\ell =1}^{s-1} \ell^j\right )
+(s+w)^n\log(s+w)  \, .
\end{align*}

\end{proposition}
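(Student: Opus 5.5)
The plan is to start from the decomposition displayed just before the statement and justify each step. First, split off the last term, writing
\[
\varphi(s)=\sum_{k=0}^{s-1}(k+w)^n\log(k+w)+(s+w)^n\log(s+w)-w^n\log w .
\]
In the main sum, factor $k+w=w\left(\frac{k}{w}+1\right)$ and expand the logarithm:
\[
\log(k+w)=\log w+\log\left(\tfrac{k}{w}+1\right).
\]
The $\log w$ contribution is $\log w\sum_{k=0}^{s-1}(k+w)^n$. Its $k=0$ term, $w^n\log w$, cancels the final $-w^n\log w$. The remaining sum $\sum_{k=1}^{s-1}(k+w)^n$ is expanded by the binomial theorem into $\sum_j\binom{n}{j}w^{n-j}\sum_{\ell=1}^{s-1}\ell^j$. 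This produces the third line of the proposition, and the term $(s+w)^n\log(s+w)$ is simply carried along.

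Next, treat $w^n\sum_{k=0}^{s-1}\left(\frac{k}{w}+1\right)^n\log\left(\frac{k}{w}+1\right)$ by applying Theorem \ref{thm:Frullani} to each summand with the variable set to $k/w$. The polynomial part is $w^n\sum_{j=1}^n\binom{n}{j}(H_n-H_{n-j})(k/w)^j$. Summing over $k$ gives $\sum_{j}\binom{n}{j}w^{n-j}(H_n-H_{n-j})\sum_{\ell=0}^{s-1}\ell^j$, which is the first line.

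The integral part is
\[
w^n(-1)^{n+1}n!\int_0^{+\infty}t^{-n}\Big(e^{-kt/w}-\sum_{j\le n}\frac{(-kt/w)^j}{j!}\Big)\frac{e^{-t}\,dt}{t}.
\]
Substitute $t\mapsto wt$; for real positive $w$ this is immediate, and for complex $w$ with $\Re w>0$ it is justified by rotating the contour or by analytic continuation in $w$. The factor $w^n$ is absorbed by $(wt)^{-n}$, and the measure becomes $e^{-wt}\,dt/t$. The integrand turns into $t^{-n}\big(e^{-kt}-\sum_j(-t)^jk^j/j!\big)$. Since the sum over $k$ is finite, it can be interchanged with the integral. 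Using $\sum_{k=0}^{s-1}e^{-kt}=\frac{1-e^{-st}}{1-e^{-t}}$, we obtain exactly the second line.

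The main obstacle is making the change of variables rigorous for complex $w$. I would handle it by noting that both sides are holomorphic in $w$ on $\Re w>0$: the integrand is $O(1)$ near $t=0$ because the subtracted Taylor polynomial is of order $n$, and it decays like $e^{-\Re w\,t}$ times a polynomial at infinity. The identity holds for $w>0$, so by uniqueness of analytic continuation it holds on the whole half plane. Convergence of each integral near $0$ must be checked termwise, and this is exactly what the Taylor subtraction in Theorem \ref{thm:Frullani} guarantees. Finally, the identity is initially valid only for integers $s$. The right-hand side is defined for complex $s$ with $\Re s>0$, since Faulhaber polynomials interpolate the power sums, and this gives the explicit interpolation $\varphi$.
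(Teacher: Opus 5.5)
Your proposal is correct and follows essentially the same route as the paper: split off the boundary terms, write $\log(k+w)=\log w+\log\left(\frac{k}{w}+1\right)$, expand $\sum_{k=1}^{s-1}(k+w)^n$ binomially, apply the Higher Frullani formula (Theorem \ref{thm:Frullani}) with variable $k/w$, rescale $t\mapsto wt$, and sum the geometric series. Your justification of the rescaling for complex $w$ (contour rotation, or analytic continuation from $w>0$) fills in a detail the paper leaves implicit.
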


From this explicit integral formula we obtain directly,

\begin{proposition}
We have $\varphi(0)=0$ and $\varphi \in \mathcal O^\pi$.
\end{proposition}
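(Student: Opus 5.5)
The plan is to read both claims directly off the explicit formula of Proposition \ref{prop:27_1}, regarding $s$ as a complex variable with $\Re s>-\Re w$ (or $\Re s\geq 0$ after shrinking the domain).

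\textbf{The value $\varphi(0)$.} Setting $s=0$, every Faulhaber sum $\sum_{\ell=0}^{s-1}\ell^k$ becomes an empty sum. Via the polynomial expression $(-1)^k k!(b_k-b_k(s))$ it vanishes at $s=0$ because $b_k(0)=b_k$. Likewise $\sum_{\ell=1}^{s-1}\ell^j$ equals $-0^j$ at $s=0$, so the correct polynomial interpolation must be used for $j=0$. It is simpler to check $\varphi(0)=0$ at the previous line of the derivation, where the term $\sum_{k=1}^{s-1}(k+w)^n\log w$ is the empty sum. For this we interpret $\sum_{k=1}^{s-1}(k+w)^n$ as the Faulhaber polynomial in $s$ that vanishes at $s=1$, which at $s=0$ gives $-w^n$. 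The integrand contains $1-e^{-0\cdot t}=0$, and the polynomial correction also vanishes, so the integral is $0$. What remains is $(w)^n\log w$ from $(s+w)^n\log(s+w)$ plus the $\log w$ term evaluated at $s=0$, namely $-w^n\log w$. These cancel, so $\varphi(0)=0$.

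\textbf{Membership $\varphi\in\mathcal O^\pi$.} Each term other than the integral is either a polynomial in $s$ or $(s+w)^n\log(s+w)$. Both are holomorphic for $\Re s>-\Re w$ and of polynomial growth, hence lie in $\mathcal O^\eps$ for every $\eps>0$.

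The only real work is the integral
$$
I(s)=\int_0^{+\infty}\frac{1}{t^n}\Big(\frac{1-e^{-st}}{1-e^{-t}}-\sum_{k=0}^n\frac{(-t)^k}{k!}\,p_k(s)\Big)\frac{e^{-wt}}{t}\,dt,
$$
where $p_k$ denotes the Faulhaber polynomial. I would split it at $t=1$.

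On $[1,\infty)$, the factor $e^{-wt}$ together with $|e^{-st}|\leq 1$ for $\Re s\geq 0$ gives a bound by a polynomial in $|s|$ (the polynomial terms contribute polynomially, being multiplied by convergent integrals). For $-\Re w<\Re s<0$, the extra factor $e^{|\Re s|t}$ is still dominated by $e^{-\Re w\, t}$ on compact sub-halfplanes.

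On $(0,1]$ the integrand is holomorphic in $t$ near $0$: the subtraction is exactly the Taylor expansion to order $n$ in $t$ of $\sum_{\ell=0}^{s-1}e^{-\ell t}$ interpreted polynomially. The identity $\frac{1-e^{-st}}{1-e^{-t}}=\sum_k\frac{(-t)^k}{k!}p_k(s)$ holds as an entire function of $t$ for each $s$. The remainder is $O(t^{n+1})$, uniformly controlled by Cauchy estimates on a disc $|t|\leq 1$. This yields a bound of the form $C(1+|s|)^{N}e^{|s|}$.

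This step is the main obstacle, since it gives exponential type $1$ rather than small type. Type $1<\pi$ is nonetheless enough, because $\mathcal O^\pi$ only requires $\beta<\pi$. Concretely, I would bound $\sup_{|t|\leq 1}|e^{-st}|\leq e^{|s|}$ and $|1-e^{-t}|^{-1}$ by a constant on $|t|=1$ (its zeros are at $2\pi i\Z$, so this circle avoids them). Then the Cauchy remainder estimate gives the claimed $e^{(1+\delta)|s|}$ growth with $1+\delta<\pi$. Combining the two pieces, and using that $\mathcal O^\pi$ is a vector space, gives $\varphi\in\mathcal O^\pi$.
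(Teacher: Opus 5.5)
Your proof is correct. Its skeleton is the paper's: both claims are read off the explicit formula of Proposition~\ref{prop:27_1}. You treat the integral term differently, though, and your version is the sounder one.

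The paper disposes of the integral by asserting that it is uniformly bounded in $s$. That cannot hold. For integer $s$, the left side $\varphi(s)=\sum_{k=1}^{s}(k+w)^n\log(k+w)$ grows like $\frac{s^{n+1}}{n+1}\log s$. Every other term in the formula is a polynomial in $s$ or $(s+w)^n\log(s+w)=O(s^n\log s)$, so the integral must carry the $s^{n+1}\log s$ growth. For $n=0$ it equals $-\sum_{\ell=0}^{s-1}\log(1+\ell/w)$ by Frullani.

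You correctly single out the integral as the only real issue and bound it. On $[1,\infty)$ the bound is polynomial. On $(0,1]$ you use the Cauchy estimate for the Taylor remainder of $\tau\mapsto \frac{1-e^{-s\tau}}{1-e^{-\tau}}$, which gives $Ce^{R|s|}$. Two small corrections:
\begin{itemize}
\item This function is holomorphic on $|\tau|<2\pi$ but not entire for non-integer $s$; it has poles at $2\pi i\Z\setminus\{0\}$. So your expansion is a power-series identity on that disc, which is all you actually use.
\item A circle of radius $1$ only controls $|t|<1$. You should take $R\in(1,\pi)$, as your $1+\delta$ indicates, or split at a smaller point $\eta$, which even gives arbitrarily small type.
\end{itemize}

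Your bound is cruder than the truth. For $\Re s\ge 0$ and real $t\in[0,1]$, write $\frac{1-e^{-st}}{1-e^{-t}}=\frac{t}{1-e^{-t}}\cdot s\int_0^1e^{-sut}\,du$. This shows its $j$-th $t$-derivative is $O((1+|s|)^{j+1})$. Taylor's formula with integral remainder then makes the integral polynomially bounded, which places $\varphi$ in $\mathcal O^{\eps}$ for every $\eps>0$; this is presumably what the paper intended. For $\mathcal O^\pi$, however, exponential type below $\pi$ suffices, so your argument does close the proof.

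Your check of $\varphi(0)=0$ is also correct and more explicit than the paper, which does not spell it out. In particular, you rightly read $\sum_{\ell=1}^{s-1}\ell^0$ as $s-1$, so the $\log w$ term contributes $-w^n\log w$ and cancels $(s+w)^n\log(s+w)$ at $s=0$.
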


\begin{proof}
 Again observe that
 $$
 \sum_{\ell={0}}^{s-1} \ell^k =(-1)^{k} k! (b_k -b_k(s))
 $$
 is the Faulhaber polynomial (thus we have polynomial growth on $s$),
 the  integral part is uniformly bounded, and the last term  is polynomially bounded.
\end{proof}

Now we are in Candelpergher's setup and we compute Ramanujan-Candelpergher sum as
$$
\sum_{k \ge 1}^{\mathcal{R}} (k+w)^n\log (k+w)=\int_0^1 \varphi(x) \, dx \ .
$$
We have two simple Lemmas.
\begin{lemma}
We have
$$
\int_0^1 \frac{1-e^{-st}}{1-e^{-t}} \, ds = \frac{1}{1-e^{-t}} -\frac1t \, .
$$
\end{lemma}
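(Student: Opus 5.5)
The plan is a direct computation: integrate in $s$ from $0$ to $1$ and evaluate the elementary integral. Writing
$$
\frac{1-e^{-st}}{1-e^{-t}}=\frac{1}{1-e^{-t}}-\frac{e^{-st}}{1-e^{-t}},
$$
the first term does not depend on $s$ and contributes $\frac{1}{1-e^{-t}}$. For the second term I would use
$$
\int_0^1 e^{-st}\,ds=\frac{1-e^{-t}}{t}, \qquad t>0,
$$
so that
$$
\int_0^1 \frac{e^{-st}}{1-e^{-t}}\,ds=\frac{1}{1-e^{-t}}\cdot\frac{1-e^{-t}}{t}=\frac1t .
$$
Subtracting gives $\frac{1}{1-e^{-t}}-\frac1t$, which is the claimed identity.

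No step is a real obstacle. The integrand $\frac{1-e^{-st}}{1-e^{-t}}$ is continuous and bounded for $s\in[0,1]$ and each fixed $t>0$, so the integral is a plain Riemann integral; splitting it into the two pieces above is legitimate because each piece is separately integrable in $s$. The identity holds pointwise for every $t>0$.

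The only point worth flagging is how the lemma will be used. It will be applied inside the $t$-integral of Proposition~\ref{prop:27_1}, which means interchanging $\int_0^1 ds$ with $\int_0^{+\infty} dt$. That interchange should be justified there, by Fubini, using the uniform bound on the integral part already noted in the proof that $\varphi\in\mathcal O^\pi$. The lemma itself needs no such justification.
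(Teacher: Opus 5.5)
Your proof is correct. It is the same direct computation the paper intends: the paper states this lemma without proof as an elementary calculation. You split off the $s$-independent term and use $\int_0^1 e^{-st}\,ds=(1-e^{-t})/t$, which gives the identity for every $t>0$. Your remark that Fubini is needed when the lemma is applied inside the $t$-integral of Proposition~\ref{prop:27_1} is apt, but that justification belongs to the later application, not to the lemma itself.
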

Observe that
$$
\frac{1-e^{-st}}{1-e^{-t}} = \frac{1}{1-e^{-t}}-\frac{e^{-st}}{1-e^{-t}}
= \frac{1}{1-e^{-t}}- \sum_{k=0}^{+\infty} b_k(s) t^k,
$$
so we have  $\int_0^1 b_k(s)\,  ds=0$ for $k\geq 0$. It follows:
\begin{lemma}\label{lemma:27_1}
We have for $k\geq 0$,
$$
\int_0^1 \left (\sum_{j=0}^{s-1} j^k \right ) ds =  (-1)^{k} k! \, b_k \ .
$$
\end{lemma}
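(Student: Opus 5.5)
The plan is to integrate the generating-function identity just stated, term by term in $s$ over $[0,1]$, and read off the coefficients of $t^k$. The first step is to record the exact identity. Since
$$
\frac{1-e^{-st}}{1-e^{-t}}=\sum_{\ell=0}^{s-1}e^{-\ell t}
$$
for integer $s$, expanding in $t$ gives
$$
\sum_{\ell=0}^{s-1}e^{-\ell t}=\sum_{k\geq 0}\frac{(-t)^k}{k!}\sum_{\ell=0}^{s-1}\ell^k .
$$
On the other side, using the definitions of $b_k$ and $b_k(s)$ (the $1/t$ terms cancel),
$$
\frac{1}{1-e^{-t}}-\frac{e^{-st}}{1-e^{-t}}=\sum_{k\geq 0}(b_k-b_k(s))\,t^k .
$$
Comparing coefficients of $t^k$ gives the Faulhaber identity
$$
\sum_{\ell=0}^{s-1}\ell^k=(-1)^k k!\,(b_k-b_k(s))
$$
for all integers $s\geq 1$. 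Both sides are polynomials in $s$, so the identity holds for real $s$ as well. This polynomial in $s$ is exactly the interpolation the paper uses in $\varphi$.

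The second step is to show $\int_0^1 b_k(s)\,ds=0$ for $k\geq 0$. Integrate the generating series $\frac{e^{-st}}{1-e^{-t}}=\frac1t+\sum_{k\ge0}b_k(s)t^k$ in $s$ over $[0,1]$. The left side gives
$$
\frac{1-e^{-t}}{t(1-e^{-t})}=\frac1t .
$$
The right side gives $\frac1t+\sum_k\left(\int_0^1b_k(s)\,ds\right)t^k$. Termwise integration is justified because the series converges uniformly for $s\in[0,1]$ and $|t|$ small, the coefficients being polynomials in $s$ and the function being analytic near $t=0$. Comparing coefficients yields $\int_0^1 b_k(s)\,ds=0$. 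Equivalently, this follows from the preceding Lemma, since $\int_0^1\frac{1-e^{-st}}{1-e^{-t}}\,ds=\frac{1}{1-e^{-t}}-\frac1t=\sum_{k\ge0}b_kt^k$.

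Finally, $\int_0^1(-1)^kk!\,(b_k-b_k(s))\,ds=(-1)^kk!\,b_k$, which is the claim.

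The only delicate point is making sure the polynomial interpolation in $s$ is the one meant. Here it is the Faulhaber polynomial, which is how $\varphi$ was written in Proposition \ref{prop:27_1}. I would also check that the termwise interchange of integral and power series is legitimate, which it is because everything is analytic near $t=0$.
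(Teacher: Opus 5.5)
Your proof is correct and is essentially the paper's argument. You write $\sum_{\ell=0}^{s-1}\ell^k=(-1)^k k!\,(b_k-b_k(s))$ as the Faulhaber polynomial interpolating the sum, and you get $\int_0^1 b_k(s)\,ds=0$ by integrating $\frac{e^{-st}}{1-e^{-t}}$ over $s\in[0,1]$, which is equivalent to the preceding lemma the paper invokes; your handling of the $\frac1t$ term is cleaner than the paper's displayed expansion, which drops it.
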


Therefore, we finally get, using the integral from Lemma \ref{lem:simple_integral},

\begin{theorem}\label{thm:4.12}
 We have
\begin{align*}
\sum_{k \ge 1}^{\mathcal{R}} & (k+w)^n\log (k+w) =\sum_{k=1}^{n} \binom{n}{k} w^{n-k}\left(H_n - H_{n-k}\right)  (-1)^{k} k! \, b_k \\
&+ (-1)^{n+1} n! \int_0^{+\infty} \frac{1}{t^n} \left( \frac{1}{1-e^{-t}} -\sum_{k=-1}^n t^k b_k  \right )\frac{e^{-wt}}{t} \, dt \\
&+ (-1)^n n! \log w \sum_{j=0}^{n}   \frac{(-1)^j}{j!} w^{j}b_{n-j}\\
&+\frac{(w+1)^{n+1}}{n+1} \log(w+1)
-\frac{w^{n+1}}{n+1} \log(w) -\frac{(w+1)^{n+1}}{(n+1)^2} + \frac{w^{n+1}}{(n+1)^2}\\
&-w^n\log w\, .
\end{align*}
\end{theorem}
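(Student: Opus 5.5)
The plan is to compute $\int_0^1 \varphi(s)\,ds$ term by term using the explicit formula for $\varphi$ in Proposition \ref{prop:27_1}. Since $\varphi\in\mathcal O^\pi$ and $\varphi(0)=0$, Candelpergher's definition gives the Ramanujan--Candelpergher sum as exactly this integral. The formula has four pieces, and each is handled by a lemma already available.

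\emph{Polynomial pieces.} For the first piece, Lemma \ref{lemma:27_1} replaces each Faulhaber sum $\sum_{\ell=0}^{s-1}\ell^k$ by $(-1)^k k!\, b_k$. This produces the first line of the statement directly.

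For the third piece, note that the sums $\sum_{\ell=1}^{s-1}\ell^j$ and $\sum_{\ell=0}^{s-1}\ell^j$ differ only for $j=0$, where $0^0=1$ contributes an extra $1$. So Lemma \ref{lemma:27_1} gives $\int_0^1\sum_{\ell=1}^{s-1}\ell^j\,ds=(-1)^j j!\,b_j-\delta_{j0}$. The $-\delta_{j0}$ term contributes $-w^n\log w$, which is the last line of the statement. The remaining part is $\log w\sum_j\binom{n}{j}w^{n-j}(-1)^j j!\,b_j$. Reindexing by $j\mapsto n-j$ turns it into $(-1)^n n!\log w\sum_{j}\frac{(-1)^j}{j!}w^j b_{n-j}$. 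The $j=n+1$ term with $b_{-1}$ is absent, which matches the range $0\le j\le n$ in the statement.

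\emph{Logarithmic piece.} The last term of $\varphi$ is $(s+w)^n\log(s+w)$. By Lemma \ref{lem:simple_integral} its integral over $[0,1]$ gives the fourth line of the statement.

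\emph{Integral piece.} I will interchange $\int_0^1 ds$ with $\int_0^\infty dt$. This is justified because the integrand is jointly continuous and dominated: near $t=0$ it is $O(t)\cdot t^{-n-1}$ uniformly for $s\in[0,1]$, by the Taylor cancellation built into $\varphi$, and it decays like $e^{-\Re w\, t}$ at infinity.

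Inside the $t$-integral:
\begin{itemize}
\item The first lemma of this subsection gives $\int_0^1\frac{1-e^{-st}}{1-e^{-t}}ds=\frac1{1-e^{-t}}-\frac1t$.
\item Lemma \ref{lemma:27_1} converts $\sum_{k=0}^n\frac{(-t)^k}{k!}\int_0^1\sum_{\ell=0}^{s-1}\ell^k\,ds$ into $\sum_{k=0}^n\frac{(-t)^k}{k!}(-1)^k k!\,b_k=\sum_{k=0}^n b_k t^k$.
\end{itemize}
Combined with the $-\frac1t=-b_{-1}t^{-1}$ term, the bracket becomes $\frac1{1-e^{-t}}-\sum_{k=-1}^n b_kt^k$, which is exactly the second line of the statement.

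The main obstacle is justifying the interchange near $t=0$. The cleanest route is to use $\sum_{\ell=0}^{s-1}e^{-\ell t}=\sum_{k\ge0}\frac{(-t)^k}{k!}\sum_\ell \ell^k$ as entire functions of $(s,t)$. The truncation error is then $O(t^{n+1})$ uniformly on compact $s$-sets, so after dividing by $t^{n+1}$ the integrand is bounded near $t=0$. Beyond that, only careful sign bookkeeping in the reindexing is needed.
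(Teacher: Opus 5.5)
Your proposal is correct and is essentially the paper's proof. The paper likewise integrates the formula of Proposition~\ref{prop:27_1} over $[0,1]$ term by term, using Lemma~\ref{lemma:27_1} (with the $j=0$ correction $\int_0^1 (s-1)\,ds=b_0-1$ producing the $-w^n\log w$ term), the identity $\int_0^1\frac{1-e^{-st}}{1-e^{-t}}\,ds=\frac{1}{1-e^{-t}}-\frac1t$, and Lemma~\ref{lem:simple_integral}.

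Your justification of swapping $\int_0^1 ds$ and $\int_0^{+\infty}dt$, which the paper leaves implicit, is a good addition, with two small corrections. The bound near $t=0$ should read $O(t^{n+1})\cdot t^{-n-1}$, as in your last paragraph, not $O(t)\cdot t^{-n-1}$. Also, $\frac{1-e^{-st}}{1-e^{-t}}$ is holomorphic only for $|t|<2\pi$, not entire in $t$, but that local analyticity is all the argument needs.
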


\begin{proof}
We compute $\int_0^1\vf(s)ds$. Using Proposition \ref{prop:27_1} and Lemma \ref{lemma:27_1} we compute
 \begin{align*}
&\int_0^1  \vf(s)ds = \sum_{k=1}^{n} \binom{n}{k} w^{n-k}\left(H_n - H_{n-k}\right)
(-1)^k k! b_k \\
&+ (-1)^{n+1} n! \int_0^{+\infty} \frac{1}{t^{n}} \left( \frac{1}{1-e^{-t}} -\frac{1}{t}-\sum_{k=0}^{n} \frac{(-t)^k}{k!}(-1)^k k! b_k \right) \frac{e^{-wt}}{t} \, dt\\
&+ \log w \sum_{j=0}^{n} \binom{n}{j} w^{n-j} \int_0^1\left (\sum_{\ell =1}^{s-1} \ell^j\right )ds +\int_0^1(s+w)^n\log(s+w)ds  \\
&= \sum_{k=1}^{n} \binom{n}{k} w^{n-k}\left(H_n - H_{n-k}\right){(-1)^k k! b_k} + (-1)^{n+1} n! \int_0^{+\infty} \frac{1}{t^{n}} \left( \frac{1}{1-e^{-t}} -\sum_{k=-1}^{n} {t^k}b_k \right) \frac{e^{-wt}}{t} \, dt\\
&+ \log w \sum_{j=1}^{n} \binom{n}{j} w^{n-j} \int_0^1\left (\sum_{\ell =1}^{s-1} \ell^j\right )ds+ \log w \; w^n \int_0^1\left (\sum_{\ell =1}^{s-1} \ell^0\right )ds +\int_0^1(s+w)^n\log(s+w)ds  \\
&= \sum_{k=1}^{n} \binom{n}{k} w^{n-k}\left(H_n - H_{n-k}\right){(-1)^k k! b_k} + (-1)^{n+1} n! \int_0^{+\infty} \frac{1}{t^{n}} \left( \frac{1}{1-e^{-t}} -\sum_{k=-1}^{n} {t^k}b_k \right) \frac{e^{-wt}}{t} \, dt\\
&+ \log w {\sum_{j=1}^{n}} \binom{n}{j} w^{n-j} (-1)^j j! b_j+ \log w \; w^n {\int_0^1\left (s-1\right )ds} +\int_0^1(s+w)^n\log(s+w)ds  \\
&= \sum_{k=1}^{n} \binom{n}{k} w^{n-k}\left(H_n - H_{n-k}\right){(-1)^k k! b_k} + (-1)^{n+1} n! \int_0^{+\infty} \frac{1}{t^{n}} \left( \frac{1}{1-e^{-t}} -\sum_{k=-1}^{n} {t^k}b_k \right) \frac{e^{-wt}}{t} \, dt\\
&+ \log w \sum_{j=1}^{n} \binom{n}{j} w^{n-j} \int_0^1\left (\sum_{\ell =1}^{s-1} \ell^j\right )ds+ \log w \; w^n \int_0^1\left (\sum_{\ell =1}^{s-1} \ell^0\right )ds +\int_0^1(s+w)^n\log(s+w)ds  \\
&= \sum_{k=1}^{n} \binom{n}{k} w^{n-k}\left(H_n - H_{n-k}\right){(-1)^k k! b_k} + (-1)^{n+1} n! \int_0^{+\infty} \frac{1}{t^{n}} \left( \frac{1}{1-e^{-t}} -\sum_{k={-1}}^{n} {t^k}b_k \right) \frac{e^{-wt}}{t} \, dt\\
&+ (-1)^nn! \log w {\sum_{j=0}^{n}} (-1)^j  \frac{w^{j}}{ j! }b_{n-j}-w^n\log w +\frac{(w+1)^{n+1}}{n+1} \log(w+1)
-\frac{w^{n+1}}{n+1} \log(w) \\
&-\frac{(w+1)^{n+1}}{(n+1)^2}  + \frac{w^{n+1}}{(n+1)^2}
\end{align*}
which gives the result.

\end{proof}

Using the integral formula for $\mathbf{S}_n(w)$ from Theorem \ref{thm:integral-rep-Sv}, we get the relation of our Ramanujan summation with Candelpergher summation.
\begin{theorem}\label{thm:4.13}
We have
$$
\sum_{k \ge 1}^{\mathcal{R}} (k+w)^n\log (k+w)= \mathbf{S}_n(w) +(-1)^{n} n! b_n(w) H_n
-\frac{(w+1)^{n+1}}{(n+1)^2}  + \frac{(w+1)^{n+1}}{n+1}\log(w+1)-w^n\log w\,.
$$

\end{theorem}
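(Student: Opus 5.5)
The plan is to start from Theorem \ref{thm:4.12}, which gives Candelpergher's sum explicitly through the Frullani route. We then subtract the integral formula for $\mathbf{S}_n(w)$ from Theorem \ref{thm:integral-rep-Sv}. Both formulas contain the same integral term
$$
(-1)^{n+1} n! \int_0^{+\infty} \frac{1}{t^n}\left(\frac{1}{1-e^{-t}}-\sum_{k=-1}^n b_k t^k\right)\frac{e^{-wt}}{t}\,dt ,
$$
so the integrals cancel exactly. The claimed identity therefore reduces to an algebraic identity between finite expressions in $w$, $\log w$, harmonic numbers and the $b_k$.

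We then sort the remaining terms by type. The terms $\frac{(w+1)^{n+1}}{n+1}\log(w+1)-\frac{(w+1)^{n+1}}{(n+1)^2}-w^n\log w$ appear on both sides and cancel. The $\log w$ part of $(-1)^{n+1}n!\,\mathbf{\hat r}_n(w)$ is
$$
(-1)^n n!\Bigl(\sum_{\ell=0}^{n+1} b_{n-\ell}\tfrac{(-1)^\ell}{\ell!}w^\ell\Bigr)\log w .
$$
Compare it with the term $(-1)^n n!\log w\sum_{j=0}^n\frac{(-1)^j}{j!}w^j b_{n-j}$ together with $-\frac{w^{n+1}}{n+1}\log w$ in Theorem \ref{thm:4.12}. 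The only discrepancy is the index $\ell=n+1$, which contributes $b_{-1}\frac{(-1)^{n+1}}{(n+1)!}w^{n+1}$. Multiplied by $(-1)^n n!$, this equals $-\frac{w^{n+1}}{n+1}$, so the $\log w$ terms match.

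What is left is a purely polynomial identity in $w$ without logarithms. It compares
$$
\sum_{k=1}^{n}\binom{n}{k}w^{n-k}(H_n-H_{n-k})(-1)^k k!\, b_k+\frac{w^{n+1}}{(n+1)^2}
$$
with
$$
(-1)^{n+1}n!\sum_{\ell=1}^{n+1}b_{n-\ell}\frac{(-1)^\ell}{\ell!}w^\ell H_\ell+(-1)^n n!\, b_n(w)H_n .
$$
To check it, we expand $b_n(w)=\sum_{\ell=0}^{n+1}b_{n-\ell}\frac{(-1)^\ell}{\ell!}w^\ell$ and reindex the first sum by $\ell=n-k$. Then $\binom{n}{k}k!=\frac{n!}{\ell!}$ and $(-1)^k=(-1)^{n-\ell}$, so the first sum becomes
$$
(-1)^n n!\sum_{\ell=0}^{n-1}\frac{(-1)^\ell}{\ell!}w^\ell b_{n-\ell}(H_n-H_\ell).
$$
On the other side, grouping by $\ell$ gives $(-1)^n n!\sum_{\ell}\frac{(-1)^\ell}{\ell!}w^\ell b_{n-\ell}(H_n-H_\ell)$. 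The terms with $\ell=0,\dots,n-1$ match directly, and the $\ell=n$ term vanishes.

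The main obstacle is the top index $\ell=n+1$, where $b_{-1}=1$. On the right it contributes $(-1)^n n!\frac{(-1)^{n+1}}{(n+1)!}w^{n+1}(H_n-H_{n+1})=\frac{w^{n+1}}{(n+1)^2}$, which is exactly the leftover polynomial term on the left. This completes the identity.

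If the bookkeeping of signs at this boundary index fails, there is a fallback. Theorem \ref{thm:FormulaRC} (Hurwitz approach) combined with Theorem \ref{thm:zetasn}, which substitutes $-\zeta_H'(-n,w)=\mathbf{S}_n(w)-(-1)^{n+1}n!\,b_n(w)H_n$, gives the statement immediately.
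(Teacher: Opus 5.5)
Your proposal is correct and follows the paper's route: the paper proves this theorem by subtracting the integral representation of $\mathbf{S}_n(w)$ (Theorem \ref{thm:integral-rep-Sv}) from the Frullani-based formula of Theorem \ref{thm:4.12}. The integrals cancel, and what remains is the finite identity you check, including the $\log w$ terms and the boundary index $\ell=n+1$ with $b_{-1}=1$. Your fallback via the Hurwitz formula and Theorem \ref{thm:zetasn} is how the paper obtains the same identity earlier, as Corollary \ref{cor:165_1}.
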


\part{}

\section{Other applications.}

\subsection{On determinants of Laplacians.}
Theorem \ref{thm:log-determinants_are_periods} is a direct application of our previous results, combined
with Kumagai's formulas for  the regularized determinant $\det \Delta$ of $M=\mathbb{S}^d$. From Kumagai's formulas \cite{Kum}, there are explicit integers
 $(\alpha_{d,j})_{0\leq j\leq d}$ and $(\tau_{d,j})_{0\leq j\leq d}$, and a rational
 number $\gamma_d$, such that
 $$
 \log \det\nolimits \Delta_{\mathbb{S}^d} = \gamma_d +\sum_{k=1}^d \alpha_{d,k} \log k +  \sum_{k=0}^{d} \tau_{d,k} \zeta'(-k),
 $$
Using Frullani integral
$$
\log (s+1) =\int_0^{+\infty} (1-e^{-st}) \, \frac{e^{-t} dt}{t}
$$
we have that for any integer $n\geq 2$, $\log (n)$ is an exponential period over the base
field $\Q(t,e^{-t})$.
Thus, we get Theorem \ref{thm:log-determinants_are_periods} as a direct
Corollary of the integral representation
of Stirling-Ramanujan constants from \cite{MPM4} and
its relation with $\zeta'(-n)$ given by  Corollary \ref{cor:Adamchik_formula}.

We prove the particular cases in Proposition \ref{prop:examples}.
From \cite{MPM4}, we recall the following explicit integral representations for the classical Stirling-Ramanujan constants $S_k$ for $k=0,1,2$
:
\begin{align*}
S_0 &=\frac{\log(2\pi)}{2}
=
-\int_0^{+\infty}
\left(
\frac{1}{1-e^{-t}}
-\frac1t
-\frac12
-t
\right)
\frac{e^{-t}}{t}\,dt
\\
S_1 &=
\int_0^{+\infty}
\frac1t
\left(
\frac{1}{1-e^{-t}}
-\frac1t
-\frac12
-\frac{t}{12}
+\frac{t^2}{4}
\right)
\frac{e^{-t}}{t}\,dt
\\
S_2 &=-2
\int_0^{+\infty}
\frac{1}{t^2}
\left(
\frac{1}{1-e^{-t}}
-\frac1t
-\frac12
-\frac{t}{12}
-\frac{t^3}{72}
\right)
\frac{e^{-t}}{t}\,dt \ .
\end{align*}

On the other hand, the regularized determinants of the spheres
$\mathbb S^d$ for $d=1,2,3$ are computed in
\cite{Kum}. Combining these formulas with the
period integral representations yields the examples
for $d=1,2,3$  from Proposition \ref{prop:examples}.
More precisely, for $d=1$, we have the classical result
$$
\det \Delta_{\mathbb S^1} = 4\pi^2
$$
which happens to coincide with the zeta-regularized product of the prime numbers (see \cite{MPM0}).
Therefore, we have $\log \det \Delta_{\mathbb S^1}=4S_0,$ and
\[
\log \det \Delta_{\mathbb S^1}
=
-4
\int_0^{+\infty}
\left(
\frac{1}{1-e^{-t}}
-\frac1t
-\frac12
-t
\right)
\frac{e^{-t}}{t}\,dt.
\]
For $d=2$,  we have $ \log \det \Delta_{\mathbb S^2}=
-4\zeta'(-1)+\frac12.$
 Since $
-4\zeta'(-1)+\frac12=4S_1+\frac16,$
we obtain
\[
\log \det \Delta_{\mathbb S^2}
=
4S_1+\frac16,
\]
and therefore
\[
\log \det \Delta_{\mathbb S^2}
=
4
\int_0^{+\infty}
\frac1t
\left(
\frac{1}{1-e^{-t}}
-\frac1t
-\frac12
-\frac{t}{12}
+\frac{7}{24}t^2
\right)
\frac{e^{-t}}{t}\,dt\;.
\]

For $d=3$, we have
\[
\log \det \Delta_{\mathbb S^3}
=
-2\zeta'(-2)-2\zeta'(0)-\log 2.
\]
We note that the formula given in \cite[(iii) of Corollary]{Kum} contains a typo. The correct formula is
\[
\det \Delta_{\mathbb S^3}
=
\pi \exp\left(\frac{\zeta(3)}{2\pi^2}\right).
\]
Using the identities
\[
\zeta'(0)=-\frac{1}{2}\log(2\pi)
\qquad\text{and}\qquad
\zeta'(-2)=-\frac{\zeta(3)}{4\pi^2},
\]
we recover the aforementioned expression for $\log \det \Delta_{\mathbb S^3}$.

Since
\[
-\zeta'(-2)=S_2
\qquad\text{and}\qquad
-\zeta'(0)=S_0,
\]
we deduce that
\[
\log \det \Delta_{\mathbb S^3}
=
2S_2+2S_0-\log 2.
\]
Using the integral representations of $S_2$, $S_0$ and that
 $$-\log 2% = -2\int_0^\infty \frac{1-e^{-t}}{2}\,\frac{e^{-t}}{t}\,dt
=
 -2\int_0^\infty\left(\frac12-\frac{e^{-t}}{2}\right)\frac{e^{-t}}{t}\,dt,$$
 we therefore obtain
\[
\begin{aligned}
\log \det \Delta_{\mathbb S^3}
=
-2\int_0^\infty
\left(
\left(1+\frac{2}{t^2}\right)\frac{1}{1-e^{-t}}
-\frac{2}{t^3}
-\frac{7}{6t}
-\frac{1}{t^2}
-\frac{37}{36}t
-\frac{1}{2}e^{-t}
\right)
\frac{e^{-t}\,dt}{t}\;.
\end{aligned}
\]

\medskip

\subsubsection*{Determinant of Laplacians of lens spaces}

Let $\mathbb{C}^{n+1}$ be equipped with the standard flat Kähler metric
\[
ds^2=\sum_{k=0}^n \lvert dz_k\rvert^2.
\]
For a positive integer $q$ and integers $p_0,\ldots,p_n$ coprime to $q$, let
\[
\gamma=e^{2\pi i/q}.
\]
The isometry $g\in U(n+1)$ defined by
\[
g(z_0,\ldots,z_n)
=
(\gamma^{p_0}z_0,\ldots,\gamma^{p_n}z_n)
\]
generates a cyclic subgroup
\[
G=\{g^k\}_{k=0}^{q-1}.
\]
The quotient
\[
L(q;p_0,\ldots,p_n)
=
\mathbb{S}^{2n+1}/G
\]
of the unit sphere $\mathbb{S}^{2n+1}\subset\mathbb{C}^{n+1}$ by the free $G$-action is a Riemannian manifold called a lens space, with metric induced by the local isometry
\[
\pi:\mathbb{S}^{2n+1}\longrightarrow\mathbb{S}^{2n+1}/G.
\]

The spectrum of the Laplacian on $L(q;p_0,\ldots,p_n)$ is well studied through the generating function associated with the spectrum of a $(2n+1)$-dimensional lens space of constant curvature $1$, namely
\[
F(z)
=
\sum_{k=0}^{\infty}
\bigl(\dim E_{k(k+2n)}\bigr)z^k,
\]
where $E_{k(k+2n)}$ denotes the eigenspace corresponding to the eigenvalue
$k(k+2n)$. This generating function is a rational function with rational coefficients; see \cite{Ike,Ike-Yam}.

We are therefore in a position to apply \cite[Theorem~1.2]{Ha}. It remains to show that $f'(0)$ (or $g'(0)$), in the notation of \cite[Theorem~1.2]{Ha}, is an exponential period over $\mathbb{Q}(t,e^{-t})$. In our setting,
\[
f(s)
=
\sum_{k=1}^{\infty}
\frac{\dim E_{k(k+2n)}}{k^s}.
\]
Using the generating function $F$ and \cite[(2.3)]{Ike}, we can write
\[
f(s)
=
\frac{1}{\Gamma(s)}
\int_0^\infty
t^{s-1}\bigl(F(e^{-t})-1\bigr)\,dt.
\]

Note that $F(0)=1$; see, for instance, \cite[p.~310]{Ike}. A straightforward generalization of the proof of the integral representation for $\zeta_H'(0,w)$ given above yields
\[
f'(0)
=
\int_0^\infty
\bigl(F(e^{-t})e^t-e^t\bigr)
\frac{e^{-t}\,dt}{t}.
\]

Since $F$ is a rational function with rational coefficients, it follows that $f'(0)$ is an exponential period over $\mathbb{Q}(t,e^{-t})$. Moreover, the remaining terms in \cite[Theorem~1.2]{Ha} are rational numbers. This proves Theorem~\ref{thm:lens}.

\subsection{On the special values $\zeta(2n+1)/\pi^{2n}$.}

Using the functional equation we can relate the special
values $\zeta'(-2n)$ to the
values taken by Riemann zeta function at positive integer values.
We have the following elementary Lemma.

\begin{lemma}
We have for $n\geq 1$,
\begin{align*}
 \zeta'(-2n) &= (-1)^n \frac{(2n)!}{2(2\pi)^{2n}} \zeta(2n+1)\,. \end{align*}
\end{lemma}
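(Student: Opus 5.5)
The plan is to differentiate Riemann's functional equation in its asymmetric form
$$
\zeta(s) = 2^s \pi^{s-1} \sin\!\left(\frac{\pi s}{2}\right) \Gamma(1-s)\, \zeta(1-s),
$$
which holds for all $s\in\C\setminus\{0,1\}$ as an identity of meromorphic functions, and then evaluate at $s=-2n$ with $n\geq 1$.

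At $s=-2n$ the factor $\sin(\pi s/2)=\sin(-\pi n)$ vanishes. All the other factors are finite and nonzero there:
\begin{itemize}
\item $2^{-2n}\pi^{-2n-1}$,
\item $\Gamma(1+2n)=(2n)!$,
\item $\zeta(1+2n)$, which is finite because $2n+1\geq 3$.
\end{itemize}
So this confirms the trivial zero $\zeta(-2n)=0$. When we differentiate the product by Leibniz's rule, every term containing the undifferentiated factor $\sin(\pi s/2)$ vanishes at $s=-2n$. Only the term in which the sine is differentiated survives:
$$
\zeta'(-2n) = 2^{-2n}\pi^{-2n-1}\cdot \frac{\pi}{2}\cos(-\pi n)\cdot (2n)!\,\zeta(2n+1).
$$

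Using $\cos(\pi n)=(-1)^n$, this simplifies to
$$
\zeta'(-2n)=(-1)^n\frac{(2n)!}{2(2\pi)^{2n}}\zeta(2n+1),
$$
which is the claimed formula.

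There is no real obstacle. The only points needing care are:
\begin{itemize}
\item using the form of the functional equation with $\sin$ rather than the symmetric $\xi$-form, so that the zero comes from a single explicit factor;
\item checking that the remaining factors are holomorphic near $s=-2n$. This is immediate, since $\Gamma(1-s)$ has poles only at $s=1,2,\dots$ and $\zeta(1-s)$ has its only pole at $s=0$.
\end{itemize}
As a sanity check, for $n=1$ the formula gives $\zeta'(-2)=-\zeta(3)/(4\pi^2)$, which matches the identity already used in the computation of $\det\Delta_{\mathbb S^3}$.
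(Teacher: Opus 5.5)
Your proof is correct and is essentially the paper's argument. The paper differentiates the same functional equation in the mirrored form $\zeta(1-s)=2(2\pi)^{-s}\cos(\pi s/2)\Gamma(s)\zeta(s)$ and evaluates at $s=2n+1$, where only the term with the differentiated trigonometric factor survives; your $\sin(\pi s/2)$ form at $s=-2n$ is the same identity under $s\mapsto 1-s$, and it spares you the chain-rule sign that comes from differentiating $\zeta(1-s)$.
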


\begin{proof}
 We take the derivative of the functional equation
 $$
 \zeta(1-s) = 2 (2\pi)^{-s} \cos\left (\frac{\pi s}{2} \right ) \Gamma(s) \zeta(s),
 $$
and we get
\begin{align*}
\zeta'(1-s) =& +2 \log (2\pi) (2\pi)^{-s} \cos\left (\frac{\pi s}{2} \right ) \Gamma(s) \zeta(s)\\
&+2 (2\pi)^{-s} \left ( \frac{\pi}{2} \right ) \sin\left (\frac{\pi s}{2} \right ) \Gamma(s) \zeta(s) \\
&-2 (2\pi)^{-s} \cos\left (\frac{\pi s}{2} \right ) \Gamma'(s) \zeta(s) \\
&-2 (2\pi)^{-s} \cos\left (\frac{\pi s}{2} \right ) \Gamma(s) \zeta'(s)\,.
\end{align*}
Making $s=2n+1$ with $n\geq 1$, we have
$$
\zeta'(-2n) = \pi (2\pi)^{-(2n+1)} (-1)^n (2n)! \zeta(2n+1) = (-1)^n \frac{(2n)!}{2(2\pi)^{2n}} \zeta(2n+1)\,.
$$

\end{proof}

Therefore, we have
$$
\frac{\zeta(2n+1)}{\pi^{2n}} =(-1)^n \frac{2^{2n+1}}{(2n)!}  \zeta'(-2n),
$$

and using Corollary \ref{cor:Adamchik_formula}, $\mathbf{S}_{2n}(1) =-\zeta'(-2n) +\frac{B_{2n+1}}{2n+1} H_{2n}$, we get
$$
\frac{\zeta(2n+1)}{\pi^{2n}} = (-1)^{n+1} \frac{2^{2n+1}}{(2n)!} \mathbf{S}_{2n}(1) \, .
$$
Now, using Theorem \ref{thm:integral-rep-Sv},  this gives the exponential period representation for the values $\frac{\zeta(2n+1)}{\pi^{2n}} $, that is given in Theorem \ref{thm:zeta(2n+1)}.

\part{}

\section{Twisted series}

Our primary goal is to study the asymptotic behaviour of the sums
\[
 \sum_{k=1}^{ms} \chi(k)\, P(k)\log Q(k),
\]
where $P,Q\in\C[s]$ and $\chi$ is a periodic function of period $m$. We denote by
\[
\hat{S}^{P,Q}(\chi)
\]
the constant term appearing in the corresponding asymptotic expansion.

\subsection{Preliminary reductions}

Let $\chi:\mathbb Z\to \mathbb C$ be a periodic function of period $m$. Given two polynomials
\[
P,Q\in\C[s],
\]
with $\deg Q\geq1$ and leading coefficient of $Q$ having positive real part, we consider divergent series of the form
\begin{equation}\label{eq:series1}
\sum_{k=1}^{ms}\chi(k)P(k)\log Q(k).
\end{equation}

The first reduction consists in assuming that $\chi(0)=0$. Indeed,
\[
\sum_{k=1}^{ms}\chi(k)P(k)\log Q(k)
=
\sum_{k=1}^{ms}(\chi(k)-\chi(0))P(k)\log Q(k)
+
\chi(0)\sum_{k=1}^{ms}P(k)\log Q(k),
\]
and the asymptotic expansion of the second sum has already been studied. Hence, in the sequel, we assume
\[
\chi(0)=0 \ .
\]

Since the asymptotic expansion is unaffected by the addition or removal of finitely many terms, we may further simplify the situation. Because the leading coefficient of $Q$ has positive real part, there exists $k_0\geq0$ such that
\[
\Re Q(k)>0,
\qquad k\geq k_0.
\]
Replacing $Q(s)$ by $Q(s+k_0)$, we may therefore assume that $\log Q(k)$ is defined using the principal branch of the logarithm for all $k\geq1$. Moreover, we may assume that every zero $\omega$ of $Q$ satisfies
\[
\Re \omega <1.
\]

Factoring
\[
Q(s)=a\prod_{\omega}(s-\omega)^{n_\omega},
\]
the study of \eqref{eq:series1} reduces to the analysis of the two classes of series
\begin{equation}\label{eq:series2}
\sum_{k=1}^{ms}\chi(k)P(k)
\end{equation}
and
\begin{equation}\label{eq:series3}
\sum_{k=1}^{ms}\chi(k)P(k)\log(k+w),
\end{equation}
where $\Re w>-1$ (with $w=-\omega$).

Expanding $P$ into monomials, the analysis of the series \eqref{eq:series2} follows directly from Faulhaber's formula, while the study of \eqref{eq:series3} reduces to the series
\begin{equation}\label{eq:series4}
\sum_{k=1}^{ms}\chi(k)\,k^n\log(k+w),
\end{equation}
where $n\geq0$ is an integer.

The asymptotic analysis of \eqref{eq:series4} is equivalent to that of
\begin{equation}\label{eq:series5}
\sum_{k=1}^{ms}\chi(k)\,(k+w)^n\log(k+w),
\end{equation}
since
\[
\sum_{k=1}^{ms}\chi(k)(k+w)^n\log(k+w)
=
\sum_{\ell=0}^{n}
\binom{n}{\ell}
w^{n-\ell}
\sum_{k=1}^{ms}\chi(k)\,k^\ell\log(k+w),
\]
and conversely,
\[
\sum_{k=1}^{ms}\chi(k)\,k^n\log(k+w)
=
\sum_{\ell=0}^{n}
\binom{n}{\ell}
(-w)^{\,n-\ell}
\sum_{k=1}^{ms}\chi(k)(k+w)^\ell\log(k+w).
\]

It is also convenient to shift the summation index and consider instead
\begin{equation}\label{eq:series6}
\sum_{k=0}^{ms-1}\chi(k)(k+w)^n\log(k+w).
\end{equation}
Indeed, since $\chi(0)=0$, we have
\[
\sum_{k=1}^{ms}\chi(k)(k+w)^n\log(k+w)
=
\sum_{k=0}^{ms-1}\chi(k)(k+w)^n\log(k+w).
\]

Partitioning the sum according to congruence classes modulo $m$, we obtain
\[
\begin{aligned}
\sum_{k=0}^{ms-1}\chi(k)(k+w)^n\log(k+w)
&=
\sum_{a=0}^{m-1}\chi(a)
\sum_{k=0}^{s-1}(a+km+w)^n\log(a+km+w)
\\
&=
m^n\log m
\sum_{a=0}^{m-1}\chi(a)
\sum_{k=0}^{s-1}
\left(k+\frac{a+w}{m}\right)^n
\\
&\quad
+
m^n
\sum_{a=0}^{m-1}\chi(a)
\sum_{k=0}^{s-1}
\left(k+\frac{a+w}{m}\right)^n
\log\left(k+\frac{a+w}{m}\right).
\end{aligned}
\]

Hence, the problem reduces to the study of sums of the form
\[
\sum_{k=0}^{s-1}(k+v)^n\log(k+v),
\qquad
v=\frac{a+w}{m}.
\]

As a consequence, the constant
\[
\hat{S}^{(k+w)^n,(k+w)}(\chi)
\]
admits the representation
\[
\hat{S}^{(k+w)^n,(k+w)}(\chi)
=
m^n
\sum_{a=0}^{m-1}
\chi(a)\,
\mathbf{S}_n\!\left(\frac{a+w}{m}\right).
\]

Our main result in this section is the following explicit integral representation for the twisted Stirling--Ramanujan constants
\[
\hat{S}^{(k+w)^n,(k+w)}(\chi).
\]

\begin{theorem}\label{thm:105_2}
Let $n\geq 0$ and let $w\in \mathbb C$ satisfy $\Re(w)>-1$. Then
\[
\begin{aligned}
\hat{S}^{(k+w)^n,(k+w)}(\chi)
&=
(-1)^{n+1}n!
\int_0^{+\infty}
\frac{1}{t^n}
\left(
\frac{F_{\chi}(-t,w-1)}{t}
-
\sum_{k=-1}^{n} b_{\chi,k}(w)t^k
\right)
\frac{e^{-t}\,dt}{t}
\\
&\quad-
(-1)^{n+1}n!
\sum_{j=0}^{n+1}
\frac{(-1)^{j+1}}{j!}
H_j\, b_{\chi,n-j}(w)
\\
&\quad+
(-1)^{n+1}n!\,
b_{\chi,n}(w+1)\log m.
\end{aligned}
\]

The twisted Bernoulli polynomials $b_{\chi,k}(w)$ are defined by the generating series
\[
\frac{1}{t}F_{\chi}(-t,w-1)
=
\sum_{k=-1}^{\infty}
b_{\chi,k}(w)t^k,
\]
and the twisted Bernoulli numbers are defined by
\[
b_{\chi,k}:=b_{\chi,k}(0)\,.
\]
\end{theorem}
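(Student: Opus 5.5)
The plan is to start from the reduction already established just before the statement,
\[
\hat{S}^{(k+w)^n,(k+w)}(\chi)=m^n\sum_{a=0}^{m-1}\chi(a)\,\mathbf{S}_n\!\left(\frac{a+w}{m}\right),
\]
and to substitute for each $\mathbf{S}_n(v)$, $v=(a+w)/m$, the integral representation of Theorem \ref{thm:integral-rep-Sv}. Since $\chi(0)=0$, only $a=1,\dots,m-1$ contribute, and for these $\Re v>0$ when $\Re w>-1$, so the theorem applies. Note also that the Faulhaber part $m^n\log m\sum_a\chi(a)\sum_k(k+v)^n$ in the splitting contributes the constant $m^n\log m\sum_a\chi(a)\,(-1)^{n+1}n!\,b_n(v)$. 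This is the Ramanujan constant of a polynomial sum, obtained as in Lemma \ref{lemma:27_1} from $\sum_{k=0}^{s-1}(k+v)^n=(-1)^n n!(b_n(v)-b_n(v+s))$.

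For the integral, I will substitute $t\mapsto mt$ in each term. This turns $e^{-vt}$ into $e^{-(a+w)t}$ and produces a factor $m^{-n}$ that cancels the prefactor $m^n$ (the measure $dt/t$ is invariant). Summing over $a$, the integrands combine into
\[
\sum_a\chi(a)\frac{e^{-(a+w)t}}{1-e^{-mt}}=\frac{1}{t}F_\chi(-t,w-1)\,e^{-t}.
\]
This is exactly the generating function defining $b_{\chi,k}(w)$, and the expression $e^{-t}\,dt/t$ of the statement appears. The subtracted polynomial terms $\sum_a\chi(a)\sum_k b_k m^k t^k e^{-(a+w)t}$ do not match $\sum_k b_{\chi,k}(w)t^k e^{-t}$ termwise. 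So I will rewrite the difference as a finite sum of Frullani-type integrals $\int_0^\infty t^{j-n}\bigl(e^{-ct}-\text{Taylor polynomial}\bigr)\,dt/t$, with $c=(a+w)/m$ versus $1$. By Theorem \ref{thm:Frullani} these evaluate to polynomials in $c$ times $H$-numbers, plus $c^j\log c$ terms.

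The last step is bookkeeping. First, the $\log v=\log(a+w)-\log m$ pieces of $\mathbf{\hat r}_n(v)$ must cancel against the logarithms produced by the Frullani corrections, except for a residual $\log m$ multiple. Together with the Faulhaber contribution, that residual should assemble, via $\sum_a\chi(a)\,m^{n}b_n\bigl(\tfrac{a+w}{m}\bigr)$ being expressed through $b_{\chi,n}(w+1)$, into $(-1)^{n+1}n!\,b_{\chi,n}(w+1)\log m$. Second, the harmonic-number terms $b_{n-\ell}\frac{(-1)^\ell}{\ell!}v^\ell H_\ell$, summed over $a$, should recombine through the Taylor expansion of $e^{-vmt}$ into $\sum_j\frac{(-1)^j}{j!}H_j\,b_{\chi,n-j}(w)$.

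I expect this step to be the main obstacle: matching the Frullani correction terms and the $\log(a+w)$ terms so that all dependence on the individual $\log(a+w)$ disappears. To organize it, I would use a cleaner route. Differentiate the claimed identity in $w$; both sides satisfy $\frac{d}{dw}X_n=nX_{n-1}$, the left side by the differential recurrence proposition and the right side by $\frac{d}{dw}b_{\chi,k}(w)=-b_{\chi,k-1}(w)$ together with the computation leading to \eqref{eq:rec-rn}. Then check the identity at a single base point, $n=-1$ or $n=0$, directly, which reduces the bookkeeping to one low-order case.
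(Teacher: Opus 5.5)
Your main line is the paper's: insert Theorem \ref{thm:integral-rep-Sv} at $v=\frac{a+w}{m}$, substitute $t\mapsto mt$ to produce $\frac{1}{t}F_\chi(-t,w-1)e^{-t}$, and repair the subtracted terms with Frullani integrals. But your treatment of the Faulhaber piece is wrong, and it would give a wrong result.

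That piece has constant term $0$. Indeed, $\sum_{k=0}^{s-1}(k+v)^n=(-1)^n n!\,(b_n(v)-b_n(v+s))$ is a polynomial in $s$ vanishing at $s=0$: $b_n(v+s)$ must itself be expanded in powers of $s$, and its constant coefficient is $b_n(v)$. This is why the reduction you start from, $\hat S=m^n\sum_a\chi(a)\mathbf S_n(\frac{a+w}{m})$, contains no such term. Lemma \ref{lemma:27_1} belongs to Candelpergher's $\int_0^1\varphi$, which is a different functional. Already for $n=0$ the piece is just $s\log m\sum_a\chi(a)$, while you assign it $\log m\sum_a\chi(a)\bigl(\frac{a+w}{m}-\frac12\bigr)$.

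The error is not harmless. Comparing generating functions after $t\mapsto mt$ gives $m^n\sum_a\chi(a)\,b_n\bigl(\frac{a+w}{m}\bigr)=b_{\chi,n}(w+1)$. So $m^n\sum_a\chi(a)\mathbf S_n(v)$ alone already produces the full $\log m$ term of the statement. After Frullani with $s=a+w-1$, the $\log(a+w)$ terms cancel those of $m^n\mathbf{\hat r}_n(v)$. The only surviving logarithm is the $+b_n(v)\log m$ coming from $-b_n(v)\log v$, with $\log v=\log(a+w)-\log m$, and it sums to $(-1)^{n+1}n!\,b_{\chi,n}(w+1)\log m$. Your additional term equals exactly this quantity again, so your bookkeeping would end with twice the $\log m$ term of the statement.

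The ``cleaner route'' has a gap of its own. Let $D_n$ be the difference of the two sides. Then $D_n'=nD_{n-1}$ together with $D_{n_0}=0$ only shows that $D_n$ is a polynomial in $w$, with a fresh constant of integration at each step. You therefore need an anchor value at some $w_0$ for every $n$; in the untwisted Theorem \ref{thm:integral-rep-Sv} this was the known case $w=1$. Here, checking a fixed $w_0$ is the same computation as for general $w$, so the bookkeeping cannot be bypassed.

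The step you leave as ``should recombine'' needs the paper's key lemma. Write the Frullani polynomial as $\sum_{j=1}^{\ell}\binom{\ell}{j}(H_\ell-H_{\ell-j})x^j=H_\ell(x+1)^\ell-\sum_{j=0}^{\ell}\binom{\ell}{j}H_{\ell-j}x^j$ with $x=a+w-1$. The first part cancels the harmonic terms of $m^n\mathbf{\hat r}_n(v)$. Weight the second part by $\chi(a)\,m^{n-\ell}b_{n-\ell}\frac{(-1)^{\ell+1}}{\ell!}$ and sum over $a$ and $\ell$. By \eqref{eq:145_2} it equals $\sum_j\frac{(-1)^{j+1}}{j!}H_j\,b_{\chi,n-j}(w)$; that identity is proved by expanding $-e^{-(a+w-1)t}(1-e^{-mt})^{-1}\sum_\ell H_\ell(-t)^\ell/\ell!$ and reading off coefficients.
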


The proof relies on the following combinatorial identity.

\begin{lemma}
For every integer $n\geq -1$,
\begin{equation}\label{eq:145_2}
\sum_{a=0}^{m-1} \chi(a)\sum_{\ell=0}^{n+1}
m^{n-\ell} b_{n-\ell}
\frac{(-1)^{\ell+1}}{\ell!}
\sum_{j=0}^{\ell}
\binom{\ell}{j}
H_{\ell-j}(a+w-1)^j
=
\sum_{\ell=0}^{n+1}
b_{\chi,n-\ell}(w)
H_\ell
\frac{(-1)^{\ell+1}}{\ell!}\,.
\end{equation}
\end{lemma}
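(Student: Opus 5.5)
The plan is to prove \eqref{eq:145_2} by identifying both sides as the coefficient of $t^n$ in one and the same Laurent series in $t$. Once both sides are rewritten as Cauchy products, the identity reduces to associativity and commutativity of multiplication of Laurent series. Throughout, fix $a$ and write $x=x_a=a+w-1$.

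\textbf{Step 1: an explicit form of the twisted Bernoulli polynomials.} From the definition of $F_\chi$ I compute
\[
\frac{1}{t}F_\chi(-t,w-1)=\sum_{a=1}^m \chi(a)\frac{e^{-at}e^{-(w-1)t}}{1-e^{-mt}}=\sum_{a=0}^{m-1}\chi(a)\,\frac{e^{-x_a t}}{1-e^{-mt}} .
\]
The second equality reindexes the sum over $a$, using periodicity and $\chi(0)=0=\chi(m)$. Replacing $t$ by $mt$ in the generating series of the $b_k$ gives
\[
\frac{1}{1-e^{-mt}}=\sum_{i\ge -1} b_i m^i t^i .
\]
Multiplying this by $e^{-xt}=\sum_j (-x)^j t^j/j!$ yields
\[
b_{\chi,k}(w)=\sum_{a}\chi(a)\sum_{\substack{i+j=k\\ i\ge -1,\ j\ge 0}} b_i m^i\frac{(-x_a)^j}{j!},
\]
so $b_{\chi,k}(w)$ is the $t^k$ coefficient of $\sum_a\chi(a)e^{-x_at}/(1-e^{-mt})$.

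\textbf{Step 2: the right-hand side as a coefficient.} Introduce the entire series
\[
G(t)=-\sum_{r\ge 0}H_r\frac{(-t)^r}{r!}.
\]
Its $t^\ell$ coefficient is $H_\ell(-1)^{\ell+1}/\ell!$. The right-hand side of \eqref{eq:145_2} is therefore the sum over $\ell$ of the $t^\ell$ coefficient of $G$ times the $t^{n-\ell}$ coefficient of $\sum_a\chi(a)e^{-x_at}/(1-e^{-mt})$. Hence it equals
\[
[t^n]\;G(t)\sum_a\chi(a)\frac{e^{-x_at}}{1-e^{-mt}}.
\]
The range $0\le \ell\le n+1$ is exactly the set of $\ell$ for which $n-\ell\ge -1$, i.e. where the Laurent coefficients are nonzero.

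\textbf{Step 3: the left-hand side as a coefficient.} The inner binomial sum is a Cauchy product of exponential generating functions:
\[
\frac{(-1)^{\ell+1}}{\ell!}\sum_{j=0}^{\ell}\binom{\ell}{j}H_{\ell-j}x^j=[t^\ell]\,\Bigl(-\sum_r H_r\frac{(-t)^r}{r!}\Bigr)\Bigl(\sum_j\frac{(-xt)^j}{j!}\Bigr)=[t^\ell]\,G(t)e^{-xt}.
\]
The sign check is $(-1)^{\ell+1}=-(-1)^{r}(-1)^{j}$ with $r+j=\ell$. Also, $m^{n-\ell}b_{n-\ell}$ is the $t^{n-\ell}$ coefficient of $1/(1-e^{-mt})$, and again $\ell\le n+1$ matches $n-\ell\ge -1$. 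So the left-hand side is
\[
\sum_a\chi(a)\,[t^n]\frac{G(t)e^{-x_at}}{1-e^{-mt}},
\]
which coincides with Step 2 by linearity. This proves \eqref{eq:145_2} for all $n\ge -1$.

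The only delicate point is bookkeeping, namely matching signs and index ranges. In particular, the $t^{-1}$ term of $1/(1-e^{-mt})$, with coefficient $b_{-1}m^{-1}$ and corresponding to $\ell=n+1$, must be included on both sides. Also, the reindexing $a\in\{1,\dots,m\}\to\{0,\dots,m-1\}$ relies on $\chi(0)=0$. I expect the sign verification in Step 3 to be the main place where an error could creep in, so I would double-check it on the case $n=-1$, where both sides reduce to $-\sum_a\chi(a)\,b_{-1}m^{-1}$.
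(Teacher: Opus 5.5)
Your argument is correct and is essentially the paper's proof. Both sides are identified as the coefficient of $t^n$ in $-e^{-(a+w-1)t}(1-e^{-mt})^{-1}\sum_{\ell\geq 0}H_\ell(-t)^\ell/\ell!$, summed against $\chi(a)$; you also make explicit the reindexing $a\in\{1,\dots,m\}\to\{0,\dots,m-1\}$ via $\chi(0)=\chi(m)=0$, which the paper leaves implicit. One small slip in your proposed sanity check: for $n=-1$ the only term on each side carries the factor $H_0=0$, so both sides are $0$, not $-\sum_a\chi(a)b_{-1}m^{-1}$.
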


\begin{proof}

We have
\[
\begin{aligned}
-e^{-(a+w-1)t}
\frac{1}{1-e^{-mt}}
\sum_{\ell=0}^{\infty}
\frac{H_\ell}{\ell!}(-t)^\ell
&=
\left(
\sum_{j=0}^{\infty}
\frac{(-1)^{j+1}}{j!}
(a+w-1)^j t^j
\right)
\left(
\sum_{k=-1}^{\infty}
m^k b_k t^k
\right)
\\
&\qquad\qquad\times
\left(
\sum_{\ell=0}^{\infty}
\frac{H_\ell}{\ell!}(-t)^\ell
\right).
\end{aligned}
\]

Expanding the product yields
\[
\begin{aligned}
-e^{-(a+w-1)t}
\frac{1}{1-e^{-mt}}
\sum_{\ell=0}^{\infty}&
\frac{H_\ell}{\ell!}(-t)^\ell
=
\sum_{j\geq0,\;k\geq-1,\;\ell\geq0}
\frac{(-1)^{j+\ell+1}}{j!\,\ell!}
(a+w-1)^j
m^k b_k H_\ell\,
t^{j+k+\ell}
\\
&=
\sum_{N=-1}^{\infty}
\left(
\sum_{j=0}^{N+1}
\frac{(-1)^{j+1}}{j!}
(a+w-1)^j
\sum_{\ell=0}^{N+1-j}
m^{N-j-\ell}
b_{N-j-\ell}
\frac{(-1)^\ell}{\ell!}
H_\ell
\right)
t^N,
\end{aligned}
\]
where we used $H_0=0$.

On the other hand,
\[
\begin{aligned}
&\sum_{\ell=0}^{n+1}
m^{n-\ell} b_{n-\ell}
\frac{(-1)^{\ell+1}}{\ell!}
\sum_{j=0}^{\ell}
\binom{\ell}{j}
H_{\ell-j}(a+w-1)^j
\\
&\qquad=
\sum_{j=0}^{n+1}
\sum_{\ell=j}^{n+1}
m^{n-\ell} b_{n-\ell}
\frac{(-1)^{\ell+1}}{j!(\ell-j)!}
H_{\ell-j}(a+w-1)^j
\\
&\qquad=
\sum_{j=0}^{n+1}
\frac{(-1)^{j+1}}{j!}
(a+w-1)^j
\sum_{\ell=0}^{n+1-j}
m^{n-j-\ell}
b_{n-j-\ell}
\frac{(-1)^\ell}{\ell!}
H_\ell.
\end{aligned}
\]

Multiplying by $\chi(a)$ and summing over $a$ gives
\[
\sum_{a=0}^{m-1} \chi(a)\sum_{\ell=0}^{n+1}
m^{n-\ell} b_{n-\ell}
\frac{(-1)^{\ell+1}}{\ell!}
\sum_{j=0}^{\ell}
\binom{\ell}{j}
H_{\ell-j}(a+w-1)^j
=
\sum_{\ell=0}^{n+1}
b_{\chi,n-\ell}(w)
H_\ell
\frac{(-1)^{\ell+1}}{\ell!},
\]
we have used
\[
\frac{1}{t}F_\chi(-t,w-1)
=
\sum_{n=-1}^{\infty} b_{\chi,n}(w)t^n.
\]
This concludes the proof of  the lemma.
\end{proof}

\begin{proof}[Proof of Theorem \ref{thm:105_2}]

From the integral representation obtained previously, we have
\[
\begin{aligned}
\hat{S}^{(k+w)^n,(k+w)}(\chi)
&=
(-1)^{n+1}n!
\int_0^{+\infty}
\frac{1}{t^n}
\Bigg(
\frac{F_{\chi}(-t,w-1)}{t}
\\
&\qquad\qquad\qquad\qquad
-
\sum_{k=-1}^{n}
m^k b_k t^k
\sum_{a=1}^{m}
\chi(a)e^{-(a+w-1)t}
\Bigg)
\frac{e^{-t}\,dt}{t}
\\
&\quad+
(-1)^{n+1}n!\,
m^n
\sum_{a=1}^{m}
\chi(a)\,
\hat{\mathbf r}_n\!\left(\frac{a+w}{m}\right).
\end{aligned}
\]

We decompose
\[
\begin{aligned}
\sum_{k=-1}^{n}
m^k b_k t^k e^{-(a+w-1)t}
&=
\sum_{k=-1}^{n}
m^k b_k t^k
\left(
e^{-(a+w-1)t}
-
\sum_{\ell=0}^{n-k}
\frac{(-(a+w-1))^\ell}{\ell!}t^\ell
\right)
\\
&\quad+
\sum_{k=-1}^{n}
m^k b_k t^k
\sum_{\ell=0}^{n-k}
\frac{(-(a+w-1))^\ell}{\ell!}t^\ell\,.
\end{aligned}
\]

Multiplying by $\chi(a)$, summing over $a$, and identifying coefficients with the Laurent expansion
\[
\frac{1}{t}F_{\chi}(-t,w-1)
=
\sum_{k=-1}^{\infty}
b_{\chi,k}(w)t^k,
\]
gives
\begin{equation}\label{eq:105_1}
\sum_{a=1}^{m}\chi(a)
\sum_{k=-1}^{n}
m^k b_k t^k
\sum_{\ell=0}^{n-k}
\frac{(-(a+w-1))^\ell}{\ell!}t^\ell
=
\sum_{k=-1}^{n}
b_{\chi,k}(w)t^k.
\end{equation}

Equivalently, for every $n\geq -1$,
\begin{equation}\label{eq:145_1}
b_{\chi,n}(w)
=
\sum_{a=1}^{m}\chi(a)
\sum_{\ell=0}^{n+1}
m^{\,n-\ell}
b_{n-\ell}
\frac{(-(a+w-1))^\ell}{\ell!}.
\end{equation}

Consequently, we obtain the identity
\[
\begin{aligned}
\sum_{k=-1}^{n} m^k b_k t^k
\sum_{a=1}^{m}\chi(a)e^{-(a+w-1)t}
&=
\sum_{a=1}^{m}\chi(a)
\sum_{k=-1}^{n} m^k b_k t^k
\left(
e^{-(a+w-1)t}
-
\sum_{\ell=0}^{n-k}
\frac{(-(a+w-1))^\ell}{\ell!}t^\ell
\right)
\\
&\quad+
\sum_{k=-1}^{n} b_{\chi,k}(w)t^k.
\end{aligned}
\]

\medskip

Substituting this decomposition into the integral and isolating the \(\chi\)-twisted singular contribution, we obtain
\[
\begin{aligned}
&\int_0^{+\infty}
\frac{1}{t^n}
\left(
\frac{F_{\chi}(-t,w-1)}{t}
-
\sum_{k=-1}^{n}
m^k b_k t^k
\sum_{a=1}^{m}\chi(a)e^{-(a+w-1)t}
\right)
\frac{e^{-t}\,dt}{t}
\\
&=
\int_0^{+\infty}
\frac{1}{t^n}
\left(
\frac{F_{\chi}(-t,w-1)}{t}
-
\sum_{k=-1}^{n}
b_{\chi,k}(w)t^k
\right)
\frac{e^{-t}\,dt}{t}
\\
&\quad-
\sum_{a=1}^{m}\chi(a)
\sum_{k=-1}^{n} m^k b_k
\int_0^{+\infty}
t^{k-n}
\left(
e^{-(a+w-1)t}
-
\sum_{\ell=0}^{n-k}
\frac{(-(a+w-1))^\ell}{\ell!}t^\ell
\right)
\frac{e^{-t}}{t}\,dt.
\end{aligned}
\]

\medskip

Applying the Frullani identity with \(s=a+w-1\), the remainder integrals yield
\[
\begin{aligned}
\sum_{a=1}^{m}\chi(a)
\sum_{k=-1}^{n} m^k b_k
\frac{(-1)^{\,n-k+1}}{(n-k)!}
\Bigg(
&(a+w)^{n-k}\log(a+w)
\\
&-
\sum_{\ell=1}^{n-k}
\binom{n-k}{\ell}
\bigl(H_{n-k}-H_{n-k-\ell}\bigr)
(a+w-1)^\ell
\Bigg).
\end{aligned}
\]

\medskip

Reindexing via \(\ell = n-k\) and reorganizing the resulting sums, we finally obtain
\[
\begin{aligned}
&\int_0^{+\infty}
\frac{1}{t^n}
\left(
\frac{F_{\chi}(-t,w-1)}{t}
-
\sum_{k=-1}^{n}
m^k b_k t^k
\sum_{a=1}^{m} \chi(a)e^{-(a+w-1)t}
\right)
\frac{e^{-t}\,dt}{t}
\\
&=
\int_0^{+\infty}
\frac{1}{t^n}
\left(
\frac{F_{\chi}(-t,w-1)}{t}
-
\sum_{k=-1}^{n}
b_{\chi,k}(w)t^k
\right)
\frac{e^{-t}\,dt}{t}
\\
&\quad-
\sum_{\ell=0}^{n+1}
b_{n-\ell} m^{n-\ell}
\frac{(-1)^{\ell+1}}{\ell!}
\sum_{a=1}^{m}\chi(a)(a+w)^{\ell}\log(a+w)
\\
&\quad+
\sum_{\ell=0}^{n+1}
b_{n-\ell} m^{n-\ell}
\frac{(-1)^{\ell+1}}{\ell!}
\sum_{j=1}^{\ell}
\binom{\ell}{j}
\bigl(H_{\ell}-H_{\ell-j}\bigr)
\sum_{a=1}^{m}\chi(a)(a+w-1)^j.
\end{aligned}
\]

Reassembling all contributions, we deduce
\[
\begin{aligned}
\hat{S}^{(k+w)^n,(k+w)}(\chi)
&=
(-1)^{n+1}n!
\int_0^{+\infty}
\frac{1}{t^n}
\left(
\frac{F_{\chi}(-t,w-1)}{t}
-
\sum_{k=-1}^{n}
b_{\chi,k}(w)t^k
\right)
\frac{e^{-t}\,dt}{t}
\\
&\quad+
(-1)^{n+1}n!
\sum_{\ell=0}^{n+1}
b_{n-\ell} m^{n-\ell}
\frac{(-1)^{\ell+1}}{\ell!}
\sum_{j=1}^{\ell}
\binom{\ell}{j}
\bigl(H_{\ell}-H_{\ell-j}\bigr)
\sum_{a=1}^{m}\chi(a)(a+w-1)^j
\\
&\quad+
(-1)^{n+1}n!
\sum_{\ell=0}^{n+1}
b_{n-\ell} m^{n-\ell}
\frac{(-1)^{\ell}}{\ell!}
\sum_{a=1}^{m}\chi(a)(a+w)^{\ell}H_{\ell}
\\
&\quad+
(-1)^{n+1}n!\, b_{\chi,n}(w+1)\log m.
\end{aligned}
\]

This concludes the proof upon combining \eqref{eq:105_1} and \eqref{eq:145_1} in the final identification step.

\medskip

It remains to simplify the term
\[
(-1)^{n+1}n!
\sum_{\ell=0}^{n+1}
m^{n-\ell} b_{n-\ell}
\frac{(-1)^{\ell+1}}{\ell!}
\sum_{j=0}^{\ell}
\binom{\ell}{j}
\bigl(H_{\ell}-H_{\ell-j}\bigr)
\sum_{a=1}^{m}\chi(a)(a+w-1)^j.
\]

From \eqref{eq:145_2}, we recall that
\[
\begin{aligned}
\sum_{a=1}^m\chi(a)
\sum_{\ell=0}^{n+1}
m^{n-\ell} b_{n-\ell}
\frac{(-1)^{\ell+1}}{\ell!}
\sum_{j=0}^{\ell}
\binom{\ell}{j}
H_{\ell-j}(a+w-1)^j
=
\sum_{j=0}^{n+1}
\frac{(-1)^{j+1}}{j!}H_j\, b_{\chi,n-j}(w)\,.
\end{aligned}
\]

Consequently, we obtain
\[
\begin{aligned}
&(-1)^{n+1}n!
\sum_{\ell=0}^{n+1}
m^{n-\ell} b_{n-\ell}
\frac{(-1)^{\ell+1}}{\ell!}
\sum_{j=0}^{\ell}
\binom{\ell}{j}
\bigl(H_{\ell}-H_{\ell-j}\bigr)
\sum_{a=1}^{m}\chi(a)(a+w-1)^j
\\
&=
(-1)^{n+1}n!
\sum_{a=1}^{m}\chi(a)
\sum_{\ell=0}^{n+1}
m^{n-\ell} b_{n-\ell}
\frac{(-1)^{\ell+1}}{\ell!}
H_{\ell}(a+w)^{\ell}
\\
&\quad-
(-1)^{n+1}n!
\sum_{j=0}^{n+1}
\frac{(-1)^{j+1}}{j!}H_j\, b_{\chi,n-j}(w)\,.
\end{aligned}
\]

We finally obtain the following expression for
\(\hat{S}^{(k+w)^n,(k+w)}(\chi)\):
\[
\begin{aligned}
\hat{S}^{(k+w)^n,(k+w)}(\chi)
&=
(-1)^{n+1}n!
\int_0^{+\infty}
\frac{1}{t^n}
\left(
\frac{F_{\chi}(-t,w-1)}{t}
-
\sum_{k=-1}^{n}
b_{\chi,k}(w)t^k
\right)
\frac{e^{-t}\,dt}{t}
\\
&\quad-
(-1)^{n+1}n!
\sum_{j=0}^{n+1}
\frac{(-1)^{j+1}}{j!}H_j\, b_{\chi,n-j}(w)
\\
&\quad+
(-1)^{n+1}n!\, b_{\chi,n}(w+1)\log m\,.
\end{aligned}
\]

\end{proof}

\subsubsection{Asymptotics for twisted sums}

We study the asymptotic expansion of twisted partial sums over arithmetic progressions. For $\ell \in \{0,1,\dots,m-1\}$, we consider
\begin{equation}\label{eq:asymp_ell_clean}
\sum_{k=1}^{ms+\ell} \chi(k)\, k^n \log k
=
\hat{A}_{\chi,n,\ell}(s)\log s
+
\hat{B}_{\chi,n,\ell}(s)
+
\hat{S}_{n,\ell}(\chi)
+
\hat{R}_{\chi,n,\ell}\!\left(\frac{1}{s}\right).
\end{equation}

In particular, we have
\[
\hat{S}_{n,0}(\chi)=\hat{S}^{k^n,k}(\chi).
\]

For $\ell=1,\dots,m-1$, a direct decomposition along residue classes yields an expansion of the form
\[
\sum_{k=1}^{\ell} \chi(k)\,(ms+k)^n \log(ms+k)
=
\hat{a}_{\chi,n,\ell}(s)\log s
+
\hat{b}_{\chi,n,\ell}(s)
+
\hat{s}_{n,\ell}(\chi)
+
\hat{r}_{\chi,n,\ell}\!\left(\frac{1}{s}\right),
\]
where the coefficients satisfy:
\begin{itemize}
\item $\hat{a}_{\chi,n,\ell}(s) \in \mathbb{Q}(\chi)[s]$,
\item $\hat{b}_{\chi,n,\ell}(s) \in s\big(\mathbb{Q}(\chi)\oplus \log m\,\mathbb{Q}(\chi)\big)[s]$,
\item $\hat{s}_{n,\ell}(\chi) \in \mathbb{Q}(\chi)\oplus \log m\,\mathbb{Q}(\chi)$,
\item $\hat{r}_{\chi,n,\ell}(1/s) \in \mathbb{Q}(\chi)[[1/s]]$.
\end{itemize}

These coefficients can be made fully explicit. For $n\geq 1$, the following relations hold:
\begin{align*}
\hat{A}_{\chi,n,\ell}(s) &= \hat{A}_{\chi,n,0}(s) + \hat{a}_{\chi,n,\ell}(s), \\
\hat{B}_{\chi,n,\ell}(s) &= \hat{B}_{\chi,n,0}(s) + \hat{b}_{\chi,n,\ell}(s) \pmod{s\mathbb{Q}(\chi)[s]}, \\
\hat{S}_{n,\ell}(\chi) &= \hat{S}_{n,0}(\chi) + \hat{s}_{n,\ell}(\chi), \\
\hat{R}_{\chi,n,\ell}(1/s) &= \hat{R}_{\chi,n,0}(1/s) + \hat{r}_{\chi,n,\ell}(1/s).
\end{align*}

Moreover, the shift of the constant term is given explicitly by
\begin{equation}\label{eq:shift_constant_clean}
\hat{s}_{n,\ell}(\chi)
=
\bigl(H_n + \log m\bigr)
\sum_{k=1}^{\ell} \chi(k)\, k^n,
\end{equation}
where $H_n$ denotes the $n$-th harmonic number. In particular, it suffices to treat the case $\ell=0$.

\medskip

We now focus on $\hat{S}_{n,0}(\chi)$. Using Theorem \ref{thm:105_2}, we obtain Theorem \ref{thm:twisted_formula}.

\begin{proof}[Proof of Theorem \ref{cor:205_1}]
Recall that the twisted Stirling--Ramanujan constant is given by
\[
\hat{S}_{n,0}(\chi)
=
m^n \sum_{a=0}^{m-1} \chi(a)\, \mathbf{S}_n\!\left(\frac{a}{m}\right)\,.
\]

Moreover, the Dirichlet $L$-function admits the representation
\[
L_\chi(s)
=
\frac{1}{m^s}
\sum_{a=1}^{m} \chi(a)\,
\zeta_H\!\left(s,\frac{a}{m}\right)\,.
\]

Differentiating at $s=-n$ yields
\[
L_\chi'(-n)
=
m^n \sum_{a=1}^{m} \chi(a)\,
\zeta_H'\!\left(-n,\frac{a}{m}\right)
+
\frac{B_{\chi,n+1}}{n+1}\,  \log m,
\]
where we have used the identity $L_\chi(-n)=-\frac{B_{\chi,n+1}}{n+1}$. 

Using Theorem \ref{thm:zetasn}, we have
\[
\mathbf{S}_n\!\left(\frac{a}{m}\right)
=
-\zeta_H'\!\left(-n,\frac{a}{m}\right)
+
(-1)^{n+1}n!\, b_n\!\left(\frac{a}{m}\right) H_n,
\quad a=1,\ldots,m.
\]

Substituting this identity into the previous expression gives
\[
\begin{aligned}
L_\chi'(-n)
&=
m^n \sum_{a=1}^{m} \chi(a)
\left(
-\mathbf{S}_n\!\left(\frac{a}{m}\right)
+
(-1)^{n+1}n!\, b_n\!\left(\frac{a}{m}\right) H_n
\right)
\\
&\quad+
\frac{B_{\chi,n+1}}{n+1}\,  \log m,
\end{aligned}
\]
which, by the definition of $\hat{S}_{n,0}(\chi)$, yields the stated formula.
\end{proof}

\medskip

We are now in a position to prove Theorem \ref{thm:dirichletperiod}.
Let $\chi$ be an imprimitive character modulo $m$. Then there
exist a proper divisor $m_1$ of $m$ and a primitive character
$\chi_1$ modulo $m_1$ that induces $\chi$, namely,
\[
\chi(n)=
\begin{cases}
\chi_1(n) & \text{if } (n,m)=1,\\
0 & \text{if } (n,m)>1.
\end{cases}
\]

This relation implies a simple relation between the corresponding
$L$-functions. By the Euler product formula,
\begin{equation}\label{eq:LL}
\begin{aligned}
L_\chi(s)
&=
L_{\chi_1}(s)
\prod_{\substack{p\mid m\\p\nmid m_1}}
\left(1-\chi_1(p)p^{-s}\right).
\end{aligned}
\end{equation}

The primitive $L$-function $L_{\chi_1}(s)$ satisfies the functional
equation. Let
\[
\xi(s,\chi_1)
=
\left(\frac{\pi}{m_1}\right)^{-\frac{s+a}{2}}
\Gamma\left(\frac{s+a}{2}\right)
L_{\chi_1}(s),
\]
where $a$ is determined by
\[
a=
\begin{cases}
0 & \text{if } \chi_1(-1)=1,\\
1 & \text{if } \chi_1(-1)=-1.
\end{cases}
\]
Then the functional equation takes the form
\begin{equation}
\xi(1-s,\overline{\chi}_1)
=
\frac{i^a m_1^{1/2}}{\tau(\chi_1)}
\xi(s,\chi_1),
\tag{14}
\end{equation}
where
\[
\tau(\chi_1)
=
\sum_{k=1}^{m_1}
\chi_1(k)e^{2\pi i k/m_1}
\]
is the Gauss sum associated with $\chi_1$.

Finally, using \eqref{eq:LL}
together with the functional equation for $L_{\chi_1}(s)$, we obtain,
for $n\geq 1$ with $n\equiv a\pmod{2}$,
\[
\frac{L_{\overline \chi}(n+1)}{\pi^n}
=
(-1)^{(n-a)/2}
\frac{i^a 2^{n+1}}
{\tau(\chi_1)m_1^n n!}
\prod_{\substack{p\mid m\\p\nmid m_1}}
\frac{1-\overline\chi_1(p)p^{-n-1}}
     {1-\chi_1(p)p^n}
L_{\chi}'(-n)\;.
\]

%Let $\chi$ be a Dirichlet character modulo $m$, induced by the primitive character $\chi_1$ of conductor $m_1$, and let \[\chi(-1)=(-1)^a. \] For $n\geq 1$ with $n\equiv a\pmod{2}$, we have \[\frac{L_{\overline \chi}(n+1)}{\pi^n} = (-1)^{(n-a)/2} \frac{i^a 2^{n+1}} {\tau(\chi_1)m_1^n n!} \prod_{\substack{p\mid m\\ p\nmid m_1}} \frac{1-\overline\chi_1(p)p^{-n-1}}     {1-\chi_1(p)p^n} L_\chi'(-n)\;.\]

This proves  Theorem \ref{thm:dirichletperiod}.

\section{Appendix:  $\chi$-Bernoulli numbers and polynomials.}\label{appendix:Bernoulli}
In this Appendix we record  some basic formulas related to $\chi$-Bernoulli numbers and polynomials.

\subsection{Definition of classical Bernoulli Numbers}

Let $t$ be a variable and
\[
F(t) = \frac{te^t}{e^t - 1} \ .
\]

Consider the formal power series  of  $F(t)$,
\[
F(t) = \sum_{n=0}^{\infty} B_n \frac{t^n}{n!} \ .
\]
The coefficients $B_n$, $n \geq 0$, are rational and are the \emph{Bernoulli numbers}. Sometimes $t^{-1}F(t)$ is used as a generating function instead of $F(t)$. The first ones are:
\[
B_0 = 1, \quad B_1 = \frac{1}{2}, \quad B_2 = \frac{1}{6}, \quad B_3 = 0, \quad \ldots
\]

Since $F(-t) = F(t) - t$, we have that $B_n = 0$ for odd  $n \geq 3$.

\subsection{Periodic functions and Dirichlet characters}

 We consider $\chi : \Z\to \C$ to be an arbitrary periodic function with period $m\geq 1$. 
 A particular case occurs when $\chi$ is a Dirichlet character $\chi$ modulo $n$. Then the minimal period $m$ divides $n$, and $m=n$ when
$\chi$ is a primitive character. For Dirichlet characters $\chi$, Leopoldt \cite{Leo} defined $\chi$-Bernoulli numbers and polynomials. We have formulas that generalize the classical ones (see for example \cite{Iwa}) and generalizations of classical congruence properties of the type of Staudt-Clausen (see \cite{Leo} and \cite{Car}). We have similar formulas in the general case when $\chi$ is only assumed to be a periodic function. We refer to \cite{Co} for the general approach of defining $\chi$-Bernoulli numbers and polynomials in this more general setup that seems better suited for twisted sums.

 Note that in the case of a Dirichlet character, we have the multiplicative property, for all $a,b\in \Z$, $\chi(ab)=\chi(a).\chi(b)$, and $\chi$ induces a character of the group $(\Z/m\Z)^*$ into $\C^*$. The multiplicative property implies that $\chi(0)^2=\chi(0)$, so $\chi(0)=0$ or $\chi(0)=1$. In this last case, for all $a\in \Z$,  $\chi(a)=\chi(a).\chi(0)=\chi(0)=1$, so $\chi =\chi^0$ is the constant function equal to $1$,   and we have period $m=1$. So, when the minimal period $m\geq 2$, we have  $\chi(0)=0$. In other words, except in the case when $\chi=\chi^0$ 
  , we always have $\chi(0)=0$ for a Dirichlet character. For a Dirichlet character we also have that the average value over a period vanishes,
 \[
\sum_{a=1}^m \chi(a) =0
 \]
Obviously the set of periodic functions $\chi$ is a $\Z$-module but the subset of Dirichlet characters is not. 
\subsection{$\chi$-Bernoulli Numbers}

\begin{definition}
Let $\chi:\Z\to \C$ be a periodic function of period $m\geq 1$. The \emph{$\chi$-Bernoulli numbers} ${B_{n,\chi}}$ and \emph{polynomials} ${B_{n,\chi}}(x)$ associated with $\chi$ are defined by the generating functions:
\[
F_{\chi,m}(t) = \sum_{a=1}^{m} \frac{\chi(a)te^{at}}{e^{mt} - 1} = \sum_{n=0}^{\infty} {B_{n,\chi, m}} \frac{t^n}{n!}, \quad F_{\chi,m}(t, x) = F_{\chi,m}(t)e^{xt} = \sum_{n=0}^{\infty} {B_{n,\chi, m}}(x) \frac{t^n}{n!}  \ .
\]
\end{definition}
We can drop the subscript $m$ and assume that $m$ is the minimal period of $\chi$ because we have for $k\geq 1$,
$F_{\chi,km}=F_{\chi,m}$.
\begin{align*}
F_{\chi,km}(t) &=\sum_{a=1}^{km} \frac{\chi(a)te^{at}}{e^{kmt} - 1} = \sum_{\ell=0}^{k-1}
\sum_{a=\ell m +1}^{(\ell+1)m} \frac{\chi(a)te^{at}}{e^{kmt} - 1} \\
&=\sum_{\ell=0}^{k-1}
\sum_{a=1}^{m} \frac{\chi(a)te^{(a+\ell m)t}}{e^{kmt} - 1} \\
&=\left (\sum_{\ell=0}^{k-1} e^{\ell mt}\right ) \sum_{a=1}^{m} \frac{\chi(a)te^{at}}{e^{kmt} - 1} \\
&=\frac{e^{kmt}-1}{e^{mt}-1} \sum_{a=1}^{m} \frac{\chi(a)te^{at}}{e^{mt} - 1}\\
&=\sum_{a=1}^{m} \frac{\chi(a)te^{at}}{e^{mt} - 1} \\
&=F_{\chi,m}(t)
\end{align*}

We use the notation $\chi^-(n)=\chi(-n)$. If $\chi$ is a Dirichlet character then $\chi^- =\chi(-1) \chi$, and
\[
F_{\chi^-}(t) =\chi(-1) F_{\chi}(t) \ \ , \  F_{\chi^-}(t,x) =\chi(-1) F_{\chi}(t,x)
\]
and
\[
B_{\chi^-} =\chi(-1) B_{\chi} \ \ , \  B_{\chi^-}(x) =\chi(-1) B_{\chi}(x) \ .
\]

\begin{proposition}[Properties of the $\chi$-Bernoulli numbers and polynomials]\label{prop:Bernoulli}
We have the following properties:
\medskip

\begin{enumerate}
    \item For 
     $\chi$ the constant function equal to $1$ and $m=1$, we recover  
     the classical Bernoulli numbers $B_n=B_{\chi,1}$ and polynomials $B_n(x)=B_{\chi,1}$.
    \item We have the binomial identity ${B_{n,\chi}}(x) = \sum_{j=0}^{n} \binom{n}{j} {B_{j,\chi}}x^{n-j}$. In particular, ${B_{n,\chi}}(0) = {B_{n,\chi}}$.

    \item When $\chi(0)=0$, the symmetry of the generating functions
    \begin{equation}\label{eq_2}
    F_{\chi}(-t)=F_{\chi^-}(t) \ \ ,\ F_{\chi}(-t, -x) = F_{\chi^-}(t, x)
    \end{equation}
    yields the parity identity $(-1)^n {B_{n,\chi}}(-x) = {B_{n,\chi^-}}(x)$ for $n \geq 0$.
    \item We have the  identity
    \begin{equation*}
    F_{\chi}(t,x)-F_{\chi}(t,x-m) =t\sum_{a=1}^m \chi(a) e^{(a+x-m)t}
    \end{equation*}
    so
    \begin{equation}\label{eq_1}
    {B_{n,\chi}}(x) - {B_{n,\chi}}(x-m) = n \sum_{a=1}^{m} \chi(a) (a+x-m)^{n-1}
    \end{equation}
    \item For $k\geq 1$, let  $S_{n,\chi}(k) = \sum_{a=1}^{k} \chi(a) a^n$. We have for $n, k \geq 0$:
    \[
    S_{n,\chi}(km) = \frac{1}{n+1} \left( {B_{n+1,\chi}}(km) - {B_{n+1,\chi}} \right).
    \]
    In particular, for $\chi$ the constant function $1$, $m=1$, this yields Faulhaber's formula $S_n(k) = \frac{1}{n+1} ({B_{n+1}}(k) - {B_{n+1}})$.
If $\frac{1}{m} \sum_{a=1}^{m} \chi(a) = 0$, then we have $B_{0,\chi,m} = \frac{1}{m}\sum_{a=1}^{m} \chi(a) = 0$, and $\deg {B_{n,\chi}} < n$ for $n \geq 1$.
\end{enumerate}
\end{proposition}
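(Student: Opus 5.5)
The plan is to derive the five items directly from the generating functions $F_\chi(t)=\sum_{a=1}^m \chi(a)te^{at}/(e^{mt}-1)$ and $F_\chi(t,x)=F_\chi(t)e^{xt}$ by comparing Taylor coefficients in $t$. For (1), with $m=1$ and $\chi\equiv 1$ we get $F_\chi(t)=te^t/(e^t-1)=F(t)$, so $B_{n,\chi}=B_n$ and $B_{n,\chi}(x)=B_n(x)$ by definition. For (2), expand $F_\chi(t)e^{xt}=\bigl(\sum_j B_{j,\chi}t^j/j!\bigr)\bigl(\sum_k x^kt^k/k!\bigr)$ and read off the coefficient of $t^n/n!$, which is the binomial convolution; setting $x=0$ gives $B_{n,\chi}(0)=B_{n,\chi}$.

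For (3), I will compute $F_\chi(-t)=\sum_{a=1}^m\chi(a)(-t)e^{-at}/(e^{-mt}-1)=\sum_{a=1}^m \chi(a)te^{(m-a)t}/(e^{mt}-1)$. Reindexing by $a'=m-a$ turns the range into $a'\in\{0,\dots,m-1\}$ with $\chi(a)=\chi(m-a')=\chi(-a')=\chi^-(a')$. The range is then shifted to $\{1,\dots,m\}$ using $\chi^-(0)=\chi(0)=0=\chi^-(m)$; this is exactly where the hypothesis $\chi(0)=0$ is needed, and it is the only step requiring care. Multiplying by $e^{(-x)(-t)}=e^{xt}$ gives $F_\chi(-t,-x)=F_{\chi^-}(t,x)$. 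Comparing the coefficients of $t^n/n!$ then yields $(-1)^nB_{n,\chi}(-x)=B_{n,\chi^-}(x)$.

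For (4), note that $F_\chi(t,x)-F_\chi(t,x-m)=F_\chi(t)e^{xt}(1-e^{-mt})=F_\chi(t)e^{(x-m)t}(e^{mt}-1)=t\sum_a\chi(a)e^{(a+x-m)t}$. The coefficient of $t^n/n!$ on the right is $n\sum_a\chi(a)(a+x-m)^{n-1}$, which proves \eqref{eq_1}.

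For (5), I apply \eqref{eq_1} with $n+1$ in place of $n$ and $x=jm$ for $j=1,\dots,k$. This gives $B_{n+1,\chi}(jm)-B_{n+1,\chi}((j-1)m)=(n+1)\sum_{a=1}^m\chi(a)(a+(j-1)m)^n$. By periodicity, $\chi(a)=\chi(a+(j-1)m)$, so the right-hand side is $(n+1)$ times the block sum $\sum_{b=(j-1)m+1}^{jm}\chi(b)b^n$. Telescoping over $j$ gives the formula for $S_{n,\chi}(km)$, and the case $\chi\equiv1$, $m=1$ is Faulhaber's formula.

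Finally, the constant term of $F_\chi(t)$ as $t\to0$ is $\sum_a\chi(a)t/(mt)=\frac1m\sum_a\chi(a)$, so $B_{0,\chi}=0$ under the vanishing-average hypothesis. Then, by (2), the top coefficient $B_{0,\chi}x^n$ of $B_{n,\chi}(x)$ vanishes, so $\deg B_{n,\chi}<n$.
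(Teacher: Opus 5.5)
Your proposal is correct and follows essentially the same route as the paper. For (3) the paper uses the same reindexing $a\mapsto m-a$, with $\chi(0)=0$ removing the boundary terms. For (5) it uses the same telescoping of \eqref{eq_1} at $x=m,2m,\dots,km$, and it leaves the remaining items as the routine coefficient comparisons that you have written out.
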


\begin{proof}
These are straightforward computations. For point (3), we have,
\begin{align*}
F_\chi(-t) &= \sum_{a=1}^m \frac{\chi(a) (-t) e^{-at}}{e^{-mt}-1} =\sum_{a=1}^m \frac{\chi(a) t e^{-at}}{e^{mt}-1} \, e^{mt}=\sum_{a=1}^m \frac{\chi(a) t e^{(m-a)t}}{e^{mt}-1} \\
&=\sum_{a=1}^m \frac{\chi^-(m-a) t e^{(m-a)t}}{e^{mt}-1} =\sum_{a=0}^{m-1} \frac{\chi^-(a) t e^{at}}{e^{mt}-1} =\sum_{a=1}^{m} \frac{\chi^-(a) t e^{at}}{e^{mt}-1} \\
&=F_{\chi^-}(t)
\end{align*}

For point (5) we add the identities (\ref{eq_1}) for $x=m, 2m, \ldots ,km$.
\end{proof}

\end{document}